\theoremstyle{plain}
\newtheorem{Theorem}{Theorem}[section]
\newtheorem{Lemma}[Theorem]{Lemma}
\newtheorem{Corollary}[Theorem]{Corollary}
\newtheorem{Proposition}[Theorem]{Proposition}
\newtheorem{Conjecture}[Theorem]{Conjecture}
\newtheorem{theorem}[Theorem]{Theorem}
\newtheorem{proposition}[Theorem]{Proposition}
\newtheorem{corollary}[Theorem]{Corollary}
\newtheorem{lemma}[Theorem]{Lemma}
\theoremstyle{definition}
\newtheorem{Definition}[Theorem]{Definition}
\newtheorem{Example}[Theorem]{Example}
\newtheorem{definition}[Theorem]{Definition}
\theoremstyle{remark}
\newtheorem{Remark}[Theorem]{Remark}
\newtheorem{Claim}[Theorem]{Claim}
\newcommand{\bbC}{\mathbb{C}}       
\newcommand{\N}{\mathbb{N}}     
\newcommand{\R}{\mathbb{R}}     
\newcommand{\fF}{\mathcal{F}}    
\newcommand{\calD}{\mathscr{D}}
\newcommand{\bW}{\mathbf{W}}
\DeclareMathOperator{\dist}{dist}			
\DeclareMathOperator{\diver}{div}			
\DeclareMathOperator{\im}{im}				
\DeclareMathOperator{\supp}{supp}			
\newcommand{\bsA}{{\mathcal{A}}}
\newcommand{\bsB}{{\mathcal{B}}}
\newcommand{\bsL}{{\mathcal{L}}}
\newcommand{\bsT}{{\mathcal{T}}}
\newcommand{\LL}{\mathrm{L}}
\newcommand{\la}{\langle}
\newcommand{\ra}{\rangle}
\renewcommand{\geq}{\geqslant}
\renewcommand{\leq}{\leqslant}
\renewcommand{\ge}{\geqslant}
\renewcommand{\le}{\leqslant}
\renewcommand{\epsilon}{\varepsilon}
\newcommand{\liminfe}{\mathop{\underline{\lim}}}
\newcommand{\limsupe}{\mathop{\overline{\lim}}}
\begin{document}

\title{On local continuous solvability of equations\\ associated to elliptic and canceling\\
linear differential operators}
\author{Laurent Moonens and Tiago Picon}
\date{\today}
\subjclass[2010]{Primary: 35J30;  Secondary: 35B45, 35F35, 46A03, 46A30, 46A32.}
\thanks{Laurent Moonens and Tiago Picon were partially supported by the Brazilian-French Network in Mathematics (RFBM) and the FAPESP grants  nos.~2017/17804-6, 2018/15484-7 and 2019/21179-5, respectively.}

\maketitle
\begin{abstract}
Consider $A(x,D):C^{\infty}(\Omega,E) \rightarrow C^{\infty}(\Omega,F)$ {an elliptic and canceling}  linear differential operator  of order $\nu$ with smooth complex coefficients in $\Omega \subset \R^{N}$ from a finite dimension complex vector space $E$ to a finite dimension complex vector space $F$ and $A^{*}(x,D)$ {its} adjoint. In this work we characterize the (local) continuous solvability of the partial differential equation $A^{*}(x,D)v=f$ (in the distribution sense) for a given distribution $f$; more precisely we show that any $x_0\in\Omega$ is contained in a neighborhood $U\subset \Omega$ in which its continuous solvability is characterized by the following condition on $f$: for every $\epsilon>0$ and any compact set $K \subset \subset U$, there exists $\theta=\theta(K,\epsilon)>0$ such that the following holds for all smooth function $\varphi$ supported in $K$:
\begin{equation}\nonumber
\left| f(\varphi) \right| \leq \theta\|\varphi\|_{W^{\nu-1,1}}+\epsilon\|A(x,D) \varphi\|_{L^{1}},
\end{equation}
where $W^{\nu-1,1}$ stands for the homogenous Sobolev space of all $L^1$ functions whose derivatives of order $\nu-1$ belongs to $L^{1}(U)$.

This characterization implies and extends results obtained before for operators associated to elliptic complex of vector fields (see \cite{MP}); we also provide local analogues, for a large range of differential operators, to global results obtained for the classical divergence operator in \cite{BB1} and \cite{PP}. 
\end{abstract}
\bigskip


\section{Introduction}

Consider $\Omega \subseteq \R^{N}$ an open set and $A(\cdot,D)$ a linear differential operator of order $\nu$ with smooth complex coefficients in $\Omega$ denoted by:
\[
A(x,D)=\sum_{|\alpha| \le\nu} 
a_\alpha(x)\partial^\alpha: C^{\infty}(\Omega;E)\rightarrow C^{\infty}(\Omega;F),
\]
where $E$ is a complex vector space of dimension $n$ and $F$ is a complex vector space of dimension $n'\ge n$.

A series of results concerning on local $L^{1}$ estimates for linear differential operators has been studied by J. Hounie and T. Picon in the setting of  elliptic systems of complex vector fields, complexes and pseudocomplexes (\cite{HP1}, \cite{HP2}). The following characterization of local $L^{1}$ estimates for operators $A(x,D)$ was proved in \cite{HP3}, namely:

\begin{Theorem}\label{thm.hp}Assume, as before, that $A(\cdot,D)$ is a linear differential operator of order $\nu$ between the spaces $E$ and $F$.  The following properties are equivalent:
\begin{enumerate}
  \item[(i)] $A(x,D)$ is elliptic 
and canceling (see below for a definition of those properties);
  \item[(ii)]\label{1.11}  every point $x_0\in\Omega$ is contained in a ball $B=B
(x_0,r)\subset\Omega$ such that the a priori estimate
\begin{equation}\label{hp3ineq}
\|u\|_{W^{\nu-1,N/(N-1)}}\le C \,\|A(x,D)u\|_{L^{1}},
\end{equation}
holds for some $C>0$ and all smooth functions $u \in C^{\infty}(B;E)$ having compact support in $B$.
\end{enumerate}
\end{Theorem}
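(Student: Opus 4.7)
The plan is to follow the strategy pioneered by Van Schaftingen for homogeneous constant-coefficient operators and extended by Hounie-Picon to variable coefficients. Both directions rest on the interplay between pointwise properties of the principal symbol $A_\nu(x,\xi)$ and global $L^1$-Sobolev estimates, together with localization on a sufficiently small ball $B$.

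For the necessity (ii)$\Rightarrow$(i), I would test \eqref{hp3ineq} against oscillatory families of the form $u_{t}(x) = \eta\bigl(t(x-x_0)\bigr) e^{it \xi_0 \cdot x} v_0$ with $\eta \in C_c^\infty(\R^N)$, $\xi_0 \in \R^N \setminus \{0\}$ and $v_0 \in E$. Careful scaling permits one to extract the action of the principal symbol $A_\nu(x_0, \xi_0)$ on $v_0$ on the right-hand side and compare it to the left-hand side: if $A_\nu(x_0, \xi_0) v_0$ vanishes, then (using a Taylor expansion around $x_0$ exploiting the localization at scale $1/t$) the right-hand side becomes asymptotically dominated by the left-hand side, contradicting the estimate. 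This yields ellipticity. The canceling property is obtained along similar lines, by testing on vector-valued profiles adapted to a putative element outside the intersection of the ranges of $A_\nu(x_0, \xi)$ as $\xi$ varies in $\R^N \setminus \{0\}$.

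For the sufficiency (i)$\Rightarrow$(ii), the starting point is Van Schaftingen's foundational inequality: if $A_\nu(x_0, D)$ is an elliptic and canceling homogeneous constant-coefficient operator, then
\[
\|D^{\nu-1} u\|_{L^{N/(N-1)}(\R^N)} \leq C_0 \,\|A_\nu(x_0, D) u\|_{L^1(\R^N)}
\]
for all $u \in C_c^\infty(\R^N; E)$. One then decomposes
\[
A(x,D) u = A_\nu(x_0, D) u + \bigl[A_\nu(x, D) - A_\nu(x_0, D)\bigr] u + R(x, D) u,
\]
where $R(x,D)$ gathers terms of order strictly less than $\nu$. On a ball $B = B(x_0, r)$ of sufficiently small radius $r$, the smoothness of the coefficients ensures that $A_\nu(\cdot, D) - A_\nu(x_0, D)$, acting on functions supported in $B$, has small operator norm, which can be absorbed into the left-hand side of the Van Schaftingen inequality. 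The lower-order terms $R(x,D) u$ are controlled by $\|u\|_{W^{\nu-1,N/(N-1)}}$ via H\"older's inequality combined with Sobolev embedding on the ball $B$, yielding the desired local estimate after shrinking $r$.

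The main obstacle is Van Schaftingen's base inequality for constant-coefficient operators, which constitutes the heart of the matter. Its proof relies on a subtle duality with bounded variation functions and a representation lemma expressing vector-valued $L^1$ functions valued in the range of $A_\nu(x_0, D)$ as controlled divergences; the canceling hypothesis is precisely what enables such decompositions. Once this constant-coefficient result is granted, the variable-coefficient extension proceeds by standard perturbation, and the necessity reduces to an analysis of oscillating test functions.
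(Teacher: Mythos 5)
This statement is not proved in the paper at all: it is quoted verbatim from Hounie--Picon \cite{HP3} and used as a black box throughout (see also the parametrix identity \eqref{pseudo} in Section~\ref{sec0}, which is the mechanism behind it). So there is no ``paper's own proof'' to compare against; what one can do is assess whether your sketch would actually close.

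Your necessity sketch (testing \eqref{hp3ineq} against oscillatory profiles $u_t(x)=\eta(t(x-x_0))e^{it\xi_0\cdot x}v_0$ to extract both ellipticity and the canceling condition) is in line with the standard arguments and is essentially right if carried out carefully.

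The sufficiency sketch, however, has a genuine gap precisely at the absorption step. After freezing the coefficients at $x_0$, the perturbation is $[A_\nu(x,D)-A_\nu(x_0,D)]u=\sum_{|\alpha|=\nu}(a_\alpha(x)-a_\alpha(x_0))\partial^\alpha u$, a term of order $\nu$. Van Schaftingen's constant-coefficient inequality only controls $D^{\nu-1}u$ in $L^{N/(N-1)}$, not $D^\nu u$ in any norm, so this term cannot be absorbed into the left-hand side. It also cannot be absorbed into the right-hand side $\|A_\nu(x_0,D)u\|_{L^1}$, because that would require an $L^1$ Calder\'on--Zygmund estimate $\|D^\nu u\|_{L^1}\lesssim\|A_\nu(x_0,D)u\|_{L^1}$, which is false (Riesz transforms are unbounded on $L^1$). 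The usual ``small oscillation of coefficients on a small ball'' argument, which works in $L^p$ for $1<p<\infty$ or in H\"older classes, therefore does not close in this $L^1$ setting. This is exactly why \cite{HP3} (and also Proposition~\ref{prop.gradmod} in the present paper) proceeds instead via a parametrix for $\Delta_{\bsA}=A^*A$: one writes $\partial^\alpha u = p(x,D)\,A(x,D)u + r'(x,D)u$ with $p\in OpS^{-1}$ (or more generally $OpS^{-\beta}$, $\beta>0$) and $r'\in OpS^{-\infty}$, and then invokes the local $L^1\to L^1$ (indeed $L^1\to h^1$) boundedness of properly supported pseudodifferential operators of strictly negative order --- the result referred to as~\cite[Theorem~6.1]{MP} and~\cite[inequality~(3.3)]{HP1}. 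That pseudodifferential input is the missing ingredient your outline would need; without it the perturbation step collapses.
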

Here, given $k\in\N$ and $1\leq p\leq\infty$, $W^{k,p}(\Omega)$ denotes the homogeneous Sobolev space of complex functions in $L^{p}(\Omega)$ whose weak derivatives of order $k$ belong to $L^{p}(\Omega)$, endowed with the (semi-)norm $\|u\|_{W^{k,p}}:=\sum_{|\alpha|=k}\| \partial_{\alpha}u \|_{L^{p}}$. 

It turns out that elliptic linear differential operators 
that satisfy an \textit{a priori} estimate like \eqref{hp3ineq} can be characterized in terms of properties of 
their principal symbol $a_\nu (x,\xi)=\sum_{|\alpha|=\nu}a_\alpha(x)\xi^\alpha$. Recall 
that the \textit{ellipticity} of $A(x,D)$ at $x_0\in\Omega$ means that for every  $\xi\in \R^N
\setminus\{0\}$ the map $a_\nu(x_0,\xi):E\longrightarrow F$ is injective.

\begin{definition}\label{d2.1}
Let $x_0\in\Omega$. A linear partial differential operator $A(x,D)$ of order $\nu$ from $E$ to $F$ with principal symbol $a_\nu(x,\xi)$ 
that satisfies :
\[
\bigcap_{\xi\in\R^N\setminus\{0\}} a_\nu(x_0,\xi)[E]=\{0\} \tag{$\star$}
\]
is said to be \textit{canceling at $x_0$}. If $(\star)$ holds for every $x_0\in\Omega$ we say that $A(x,D)$ is \textit{canceling}.
\end{definition}

Examples of canceling operators satisfying $(\star)$ can be founded in \cite{HP3}; this is the case in particular for operators associated to elliptic system of complex vector fields (see \cite{HP1}, \cite{HP2}). The canceling property for linear differential operators was originally defined by Van Schaftingen \cite{VS4} in the setup of homogeneous operators with constant coefficients $A(D)$ and  stands out by several applications (and characterizations) in the theory of \textit{a priori} estimates in $L^1$ norm (see for instance \cite{VS5}  for a brief description).  \\


In this work, we are interested to study the (local) continuous solvability in the weak sense of the equation:
\begin{equation}\label{diverl}
A^{*}(x,D) v =f,
\end{equation}
where $A(x,D)$ is an elliptic and canceling linear differential operator. We use the notation  $A^{*}\;:=\;\overline{ A^{t}}$ where $\overline A$ denotes the operator obtained from $A$ by conjugating its coefficients and $A^t$ is its formal transpose~---~namely this means that, for all smooth functions $\varphi$ and $\psi$ having compact support in $\Omega$ and taking values in $E$ and $F$ respectively, we have:
 $$
 \int_\Omega A(x,D)\varphi \cdot \bar{\psi}=\int_\Omega \varphi \cdot \overline{A^*(x,D)\psi}.
 $$
 
Our main result is the following.
\begin{theorem}\label{mainthm}
Assume $A(x,D)$ is as before. Then every point $x_{0} \in \Omega$ admits an open neighborhood $U \subset \Omega$ such that for any $f \in \calD'({U})$, the equation (\ref{diverl}) is continuously solvable in $U$ if and only if $f$ is an $\bsA$-charge in $U$, meaning that for every $\epsilon>0$ and every compact set $K \subset \subset U$, there exists $\theta=\theta(K,\epsilon)>0$ such that one has:
\begin{equation}\label{lstrong}
\left| f(\varphi) \right| \leq \theta\|\varphi\|_{W^{\nu-1,1}}+\epsilon\|A(x,D) \varphi\|_{L^{1}}, 
\end{equation}
for any smooth function $\varphi$ in $U$ vanishing outside $K$.
\end{theorem}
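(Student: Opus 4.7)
The plan is to fix $U = B(x_0, r)$ small enough that the \emph{a priori} estimate \eqref{hp3ineq} of Theorem~\ref{thm.hp} holds throughout $U$; combining this with H\"older's inequality on the bounded set $U$ also yields $\|\varphi\|_{W^{\nu-1, 1}} \leq C_K \|A(x,D)\varphi\|_{L^1}$ for each $\varphi \in C_c^\infty(K, E)$ with $K \subset\subset U$. Ellipticity moreover furnishes injectivity of $A(x,D) : C_c^\infty(U, E) \to L^1(U, F)$ via elliptic unique continuation.

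For necessity, suppose $f = A^*(x,D) v$ with $v$ continuous on $U$, and fix $\epsilon > 0$ together with $K \subset\subset U$. Uniform continuity of $v$ on a slight thickening of $K$ allows one to pick a standard mollification $v_\delta = v * \rho_\delta$ with $\|v - v_\delta\|_{L^\infty(K)} < \epsilon$. For any $\varphi \in C_c^\infty(K, E)$,
\[
f(\varphi) = \langle v - v_\delta, A(x,D)\varphi\rangle + \langle A^*(x,D) v_\delta, \varphi\rangle.
\]
The first summand is controlled by $\epsilon \|A(x,D)\varphi\|_{L^1}$; for the second, boundedness of the smooth function $A^*(x,D) v_\delta$ together with the iterated Poincar\'e inequality $\|\varphi\|_{L^1(K)} \leq C_K' \|\varphi\|_{W^{\nu-1, 1}}$ gives a bound $\theta \|\varphi\|_{W^{\nu-1, 1}}$ with $\theta := C_K' \|A^*(x,D) v_\delta\|_{L^\infty(K)}$, depending only on $(K, \epsilon)$.

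For sufficiency, assume $f$ is an $\bsA$-charge. Combining the charge inequality \eqref{lstrong} with \eqref{hp3ineq} already implies $|f(\varphi)| \leq C''_K \|A(x,D)\varphi\|_{L^1}$ on each $C_c^\infty(K, E)$; by injectivity of $A(x,D)$, the assignment $A(x,D)\varphi \mapsto f(\varphi)$ is a continuous linear functional on $A(x,D)(C_c^\infty(U, E)) \subset L^1_{\mathrm{loc}}(U, F)$, and Hahn-Banach extension followed by Riesz representation yields $v_0 \in L^\infty_{\mathrm{loc}}(U, F)$ with $A^*(x,D) v_0 = f$ in $\calD'(U)$. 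The heart of the proof lies in upgrading this bounded solution to a \emph{continuous} one. The plan is to exploit the full $\epsilon$-$\theta$ structure: along an exhaustion $K_n \nearrow U$ and with $\epsilon_n = 2^{-n}$, the charge condition lets us split $f|_{C_c^\infty(K_n, E)} = f_n^{(1)} + f_n^{(2)}$ with $|f_n^{(1)}(\varphi)| \leq \theta_n \|\varphi\|_{W^{\nu-1,1}}$ (hence $f_n^{(1)} = \sum_{|\alpha|=\nu-1} \partial^\alpha g_{n,\alpha}$ for suitable $g_{n,\alpha} \in L^\infty$) and $|f_n^{(2)}(\varphi)| \leq \epsilon_n \|A(x,D)\varphi\|_{L^1}$. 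The small piece yields by the above Hahn-Banach argument a solution $w_n \in L^\infty_{\mathrm{loc}}$ of $A^*(x,D) w_n = f_n^{(2)}$ with $\|w_n\|_\infty \leq C \epsilon_n$; the regular piece, being a sum of $(\nu-1)$-st order derivatives of $L^\infty$ functions, becomes $C^{0,\alpha}_{\mathrm{loc}}$ under a fixed elliptic parametrix $P$ of $A^*(x,D)$ (whose symbol is surjective, $A$'s being injective), producing a continuous solution $u_n := P f_n^{(1)}$ modulo a smoothing error absorbable into the small part.

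The principal obstacle is arranging these decompositions consistently so that the continuous pieces $(u_n)$ form a Cauchy sequence in $L^\infty_{\mathrm{loc}}(U, F)$, despite the ambiguity coming from the (generally infinite-dimensional) kernel of $A^*(x,D)$ acting on continuous functions; the plan is to canonicalize this via a fixed parametrix applied to a coherent family of $L^\infty$ representations of the $f_n^{(1)}$'s, an approach generalizing the framework of \cite{MP} for elliptic complexes of vector fields. Once established, the uniform limit $v := \lim_n u_n$ is continuous on $U$, and since $\|w_n\|_{L^\infty} \to 0$ one passes to the limit in $A^*(x,D)(u_n + w_n) = f$ to obtain $A^*(x,D) v = f$ in $\calD'(U)$.
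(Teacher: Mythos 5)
Your necessity argument is essentially the paper's: you approximate the continuous $v$ uniformly on a compact set by a smooth vector field (Weierstrass rather than mollification, but that is immaterial) and then transpose the smooth piece onto $\varphi$. A minor point: citing ``elliptic unique continuation'' to get injectivity of $A(x,D)$ on $C^\infty_c$ is both unnecessary and somewhat hazardous for general higher-order systems; the injectivity you need is an immediate consequence of the Sobolev--Gagliardo--Nirenberg estimate \eqref{hp3ineq}, since $A\varphi=0$ and $\varphi$ compactly supported forces $\varphi=0$. That is how the paper obtains it implicitly.

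The sufficiency side, which is the heart of the theorem, contains a genuine gap, and it is exactly the one you flag as ``the principal obstacle.'' Your $L^\infty$ solution $v_0$ via Hahn--Banach is fine, but the upgrade to continuity does not go through with a parametrix. Using the paper's notation, a right parametrix of $A^*(\cdot,D)$ is essentially $B:=A(\cdot,D)\,q(\cdot,D)$ with $q\in OpS^{-2\nu}$ a parametrix of $\Delta_{\bsA}=A^*A$. The term to control along the exhaustion is $u_n-u_m = B\bigl(f^{(1)}_n-f^{(1)}_m\bigr)$; on $K_n$ one has $f^{(1)}_n-f^{(1)}_m=f^{(2)}_m-f^{(2)}_n=A^*z_{n,m}$ with $\|z_{n,m}\|_\infty\leq\epsilon_n+\epsilon_m$, and hence $u_n-u_m$ is, modulo smoothing errors, $BA^*z_{n,m}=Aq A^*z_{n,m}$. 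The operator $AqA^*$ is of order $0$, and order-$0$ pseudodifferential operators of type $(1,0)$ are \emph{not} bounded on $L^\infty$ (they map $L^\infty$ into $BMO$ only). So the estimate $\|u_n-u_m\|_{L^\infty(K)}\lesssim \epsilon_n+\epsilon_m$ that your plan requires simply does not hold, and a different choice of representing functions $g_{n,\alpha}$ or $w_n$ cannot fix this, since any two choices differ by the same order-$0$ mechanism. This is precisely the obstruction that makes the problem nontrivial and is why De Pauw--Pfeffer \cite{PP} and the paper do not go through an $L^\infty$ solution at all.

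The paper's route is essentially disjoint from yours: it endows $CH_\bsA(U)$ with a Fr\'echet topology, identifies $CH_\bsA(U)^*$ with $BV_{\bsA,c}(U)$ via the pair $(\Phi,\Psi)$ and a Banach--Grothendieck weak$^*$-continuity argument, proves density of $\Gamma[C(U,F^*)]$ via $\Lambda[\calD(U;E^*)]\subseteq\Gamma[C(U,F^*)]$ coming from a local smooth solution of $\Delta_\bsA u=\varphi$, and proves strong sequential closedness of $\mathrm{im}\,\Gamma^*=\mathrm{im}\,\Xi$ using the $BV_\bsA$-compactness of Proposition~\ref{prop.compacite} and a support argument. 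Surjectivity of $\Gamma$ then follows from the closed range theorem. That duality-and-compactness scheme is what replaces the failed $L^\infty$-limit in your outline, and if you want to salvage a constructive approach you would need some replacement for the missing $L^\infty\to L^\infty$ bound on $AqA^*$, which is not available.
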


One simple argument (see Section \ref{sec2}) shows that the above continuity property on $f$ is a necessary condition for the continuous solvability of equation \eqref{diverl} in $U$. Theorem \ref{mainthm} asserts that the continuity property \eqref{lstrong} is also sufficient, under the canceling and ellipticity assumptions on the operator. The proof, which will be presented in Section \ref{sec3}, is based on a functional analytic argument inspired from \cite{PP} and already improved in \cite{MP} for divergence-type equations associated to complexes of vector fields (observe in particular that one recovers \cite[Theorem 1.2]{MP} when applying Theorem~\ref{mainthm} to the latter context). However, it should be mentioned here that by allowing in \eqref{diverl} a much larger class of (higher order) differential operators, that method of proof had to be very substantially refined, leading to the use of new tools. Applications of Theorem \ref{mainthm} are presented in the Section \ref{aplica}.

Assume for a moment that $A(x,D)$ be elliptic. We point out the canceling assumption - characterized by inequality \eqref{hp3ineq} -  plays a fundamental role in our argument in the proof of Theorem \ref{mainthm}. However, we should emphasize that this property might not be necessary to obtain a characterization of continuous solutions to the equation \eqref{diverl} formulated along the previous lines. In the context of the Poisson equation with measure data $\Delta u=\mu$ (where the Laplacean operator is \textit{not} a canceling operator), it follows indeed from a result by Aizenman and Simon \cite[Theorem~4.14]{AS} (see also Ponce \cite[Proposition~18.1]{PONCE}, where this question is studied in a luminous fashion) that, given a smooth, bounded open set $\Omega$ in $\R^n$ and a measure $\mu$ in $\Omega$, the Dirichlet problem associated with $\Delta u=\mu$ in $\Omega$ has a continuous solution in $\bar{\Omega}$ if and only if, for every $\epsilon>0$, there exists $\theta>0$ such that for any $\varphi\in C^\infty_0(\bar{\Omega})$, one has:
$$
\left|\int_\Omega \varphi\,d\mu\right|\leq \theta\|\varphi\|_{L^1}+\epsilon\|\Delta \varphi\|_{L^1}.
$$
This result, however, is proved using very different techniques than the ones developed here.
  

\section{Preliminaries and notations}
We always denote by $\Omega$ an open set of $\R^{N}$, $N\geq 2$. Unless otherwise specified, all functions are complex valued and the notation $\int_A f$ stands for the Lebesgue integral $\int_A f(x)dx$.  As usual, $\calD(\Omega)$ and $\calD'(\Omega)$ are the spaces of complex test functions and distributions, respectively. When $K\subset \subset \Omega$ is a compact subset of $\Omega$, we let $\calD_{K}(\Omega):=\calD(\Omega)\cap\mathcal{E}'(K) $, where $\mathcal{E}'(K)$ is the space of all distributions with compact support in $K$. Since the ambient field is $\bbC$, we identify (formally) each $f\in L^1_{loc}(\Omega)$  with the distribution $T_f\in\calD'(\Omega)$ given by $T_f(\varphi)=\int_\Omega f\bar{\varphi}$. We also consider $C(\Omega)$ the space of all continuous functions in $\Omega$. 
When working with objects in a function space taking values in a finite-dimensional (normed) vector space $E$, we shall indicate it as a second argument (\textit{e.g.} $C(\Omega,E)$ will denote the space of all $E$-valued continuous vector fields $v:\Omega\to E$).
Finally we use the notation $f\lesssim g$ to indicate the existence of an universal constant $C>0$, independent of all variables and unmentioned parameters, such that one has $f \leq Cg$.

\subsection*{{Some Sobolev spaces}}
Given a finite-dimensional (complex, normed) vector space $E$, we denote as before by $W^{k,p}(\Omega; {E})$ for $k \in \N$ and $1\leq p \leq \infty$, the homogeneous Sobolev space of functions in $L^{p}(\Omega; {E})$ whose weak derivatives of order $k$  belong to $L^{p}(\Omega; {E})$, endowed with the (semi-)norm $\|u\|_{W^{k,p}}=\sum_{|\alpha|=k}\| \partial_{\alpha}u \|_{L^{p}}$; {we also denote by $W^{k,p}_c(\Omega; { E})$ the space of its elements having compact support in $\Omega$}. Given $1< p\leq \infty$ and $k\in\N^*$ we also define the space $W^{-k,p}_{\mathrm{loc}}(\Omega; {E^*}):=(W_c^{k,p'}(\Omega; {E}))^*$ as the space of distributions $f\in\calD'(\Omega; E^*)$  enjoying the following property: for all $K\subset\subset \Omega$, there exists $C_K>0$ such that for all $\varphi\in\calD_K(\Omega; {E})$, one has:
$$
|\la f,\varphi\ra|\leq C_K \|\varphi\|
_{W^{k,p'}(\Omega)}=C_K\sum_{|\alpha|=k} \|\partial^\alpha\varphi\|_{p'},
$$
where $1\leq p'< \infty$ is defined by $\frac 1p +\frac{1}{p'}=1$.

Given $k\in\N$ and $1\leq p\leq\infty$, one also defines the (classical, inhomogeneous) Sobolev space $\bW^{k,p}(\Omega; {E})$ as the space of complex functions in $L^{p}(\Omega; {E})$ whose weak derivatives up to order $k$  belong to $L^{p}(\Omega; {E})$, endowed with the norm $\|u\|_{\bW^{k,p}}:=\sum_{|\alpha|\leq k}\| \partial_{\alpha}u \|_{L^{p}}$. We denote finally by $\bW^{k,p}_0(U; {E})$ the completion of the space $\calD(\Omega; {E})$ in $\bW^{k,p}(\Omega; {E})$. The space $\bW^{k,p}_{0}(\Omega; {E})$ is classically reflexive and separable for $1<p<\infty$ (see \textit{e.g.} \cite[p.~64]{AF}).

\begin{Lemma}\label{lemma.hb}
Fix $k\in\N^*$ and $1<p\leq \infty$.
Given $f\in W^{-k,p}_{\mathrm{loc}}(\Omega;{ E^*})$ and $K\subset\subset\Omega$ there exists $(g_\alpha)_{|\alpha|= k}\subseteq L^p(\Omega; {E^*})$ for which one has, for all $\varphi\in\calD_K(\Omega; {E})$:
$$
\la f,\varphi\ra=\sum_{|\alpha|= k} \int_\Omega g_\alpha \overline{\partial^\alpha\varphi}.
$$
\end{Lemma}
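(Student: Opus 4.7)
The plan is to combine the Hahn-Banach theorem with the standard $L^p$-duality of Bochner spaces. Fix $K\subset\subset\Omega$ and let $C_K>0$ be a constant realizing the defining estimate of membership of $f$ in $W^{-k,p}_{\mathrm{loc}}(\Omega;E^*)$ on $\calD_K(\Omega;E)$.

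First, I would introduce the linear map
\[
T:\calD_K(\Omega;E)\longrightarrow Y:=\prod_{|\alpha|=k}L^{p'}(\Omega;E),\qquad T\varphi:=(\partial^\alpha\varphi)_{|\alpha|=k},
\]
where $Y$ is endowed with the norm $\|(h_\alpha)\|_Y:=\sum_{|\alpha|=k}\|h_\alpha\|_{p'}$. Since each $\partial^\alpha\varphi$ is smooth and compactly supported, $T$ does take values in $Y$. Crucially, $T$ is injective: if $\partial^\alpha\varphi=0$ for every $|\alpha|=k$, then the extension by zero of $\varphi$ to $\R^N$ is a smooth function whose $k$-th order partial derivatives all vanish, hence a polynomial of degree at most $k-1$, which must vanish identically on account of its compact support.

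Next, I would define a linear form $L$ on the subspace $T(\calD_K(\Omega;E))\subseteq Y$ by $L(T\varphi):=\la f,\varphi\ra$. This is unambiguous thanks to the injectivity of $T$, and the defining inequality for $W^{-k,p}_{\mathrm{loc}}(\Omega;E^*)$ translates exactly into the continuity estimate $|L(T\varphi)|\leq C_K\|T\varphi\|_Y$. Applying the Hahn-Banach theorem then produces a continuous linear extension $\widetilde L\in Y^*$ of $L$.

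Finally, I would identify $\widetilde L$ by appealing to the duality $L^{p'}(\Omega;E)^*\simeq L^p(\Omega;E^*)$, valid whenever $1\leq p'<\infty$, which corresponds precisely to our assumption $1<p\leq\infty$. Under the conjugate-linear pairing $(g,h)\mapsto\int_\Omega g\cdot\overline{h}$ consistent with the conventions of Section~2, this identification (applied coordinatewise) yields a family $(g_\alpha)_{|\alpha|=k}\subseteq L^p(\Omega;E^*)$ satisfying $\widetilde L\bigl((h_\alpha)\bigr)=\sum_{|\alpha|=k}\int_\Omega g_\alpha\,\overline{h_\alpha}$; specializing to $h_\alpha=\partial^\alpha\varphi$ then delivers the desired representation. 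The only real subtlety lies in bookkeeping the complex-conjugation convention together with the fact that $E$ is a finite-dimensional complex vector space rather than $\bbC$ itself; both are handled routinely, for instance by picking a basis of $E$ and reducing to the scalar-valued case component by component.
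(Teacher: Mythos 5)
Your proposal is correct and follows essentially the same route as the paper's proof: build a functional on the subspace of derivative-tuples $(\partial^\alpha\varphi)_{|\alpha|=k}$ inside $L^{p'}(\Omega;E^M)$, extend it by Hahn--Banach, and represent the extension via the duality $(L^{p'})^*\simeq L^{p}$. The only difference is that you explicitly justify the well-definedness (injectivity of $T$, via the compactly supported polynomial argument), a step the paper simply asserts by saying ``there exists a unique $\varphi$.''
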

{
\begin{Remark}
In the latter expression, and throughout this paper, one uses, given $e\in E$ and $e^*\in E^*$ (and in any finite-dimensional duality setting), the notation $e^*e$ instead of $\la e^*,e\ra$.
\end{Remark}}
\begin{proof}
Fix $f\in W^{-k,p}_{\mathrm{loc}}(\Omega; {E^*})$ and $K\subset\subset \Omega$.
Denote by $M$ the number of multi-indices $\alpha\in\N^n$ with $|\alpha|=k$ and let:
$$
X:=\{(u_\alpha)_{|\alpha|=k}: \text{there exists } \varphi\in\calD_K(\Omega; {E})\text{ with } u_\alpha=\partial^\alpha\varphi\text{ for all }|\alpha|=k\},
$$
be endowed with the norm $\|(u_\alpha)_{|\alpha|=k}\|:=\sum_{|\alpha|=k} \|u_\alpha\|_{p'}$~---~we hence see it as a subspace of $L^{p'}(\Omega,{E^M})$.

Now define a linear functional $F$ on $X$ in the following way: if $(u_\alpha)_{|\alpha|=k}\in X$ is given, there exists a unique $\varphi\in\calD_K(\Omega;{E})$ with $u_\alpha=\partial^\alpha \varphi$ for all $|\alpha|=k$; we then let $\la F, (u_\alpha)\ra:=\la f,\varphi\ra$.
There holds:
$$
|\la F,(u_\alpha)\ra|\leq C_K\sum_{|\alpha|=k} \|u_\alpha\|_{p'},
$$
for all $(u_\alpha)_{|\alpha|=k}\in X$.
By the Hahn-Banach theorem, $F$ extends to a continuous linear functional on $L^{p'}(\Omega,E^M)$ satisfying
$$
|\la F,(u_\alpha)\ra|\leq C_K\sum_{|\alpha|=k} \|u_\alpha\|_{p'},
$$
for all $(u_\alpha)\in L^{p'}(\Omega,{E^M})$; there hence exists $(g_\alpha)\in L^p(\Omega;{(E^*)^M}) = (L^{p'}(\Omega;E^M))^*$ such that one has:
$$
\la F, (u_\alpha)\ra=\sum_{|\alpha|=k} \int_\Omega g_\alpha \bar{u}_\alpha,
$$
for all $(u_\alpha)\in L^{p'}(\Omega,{E^M})$. We hence get in particular, for $\varphi\in\calD_K(\Omega;{ E})$:
$$
\la f,\varphi\ra=\la F, (\partial^\alpha\varphi)_{|\alpha|=k}\ra =\sum_{|\alpha|=k} \int_\Omega g_\alpha\overline{\partial^\alpha\varphi}.
$$
The proof is complete.
\end{proof}

\begin{Example}\label{ex.sob-neg}
Assume $\Omega$ to be an open set and fix $k\in\N$. Then one has $L^N_{\mathrm{loc}}(\Omega;{E^*})\subseteq W^{-k,N}_{\mathrm{loc}}(\Omega;{E^*})$.

To see this, observe the statement is obvious in case $k=0$. Hence assume $k\in\N^*$ and fix $f\in L^N_{\mathrm{loc}}(\Omega;{E^*})$. Let $K\subset\subset\Omega$ be compact and compute now for $\varphi\in\calD_K(\Omega;{E})$:
$$
\left|\int_\Omega f\bar{\varphi}\right|\leq \|f\|_{L^N(K;{E^*})}\|\varphi\|_{L^{N/(N-1)}(K;{E})} =  \|f\|_{L^N(K;{E^*})}\|\varphi\|_{L^{N/(N-1)}(\R^N;{E})}.
$$
Yet we get, according to the Sobolev-Gagliardo-Nirenberg inequality (which we shall refer to as the ``SGN'' inequality in the sequel):
$$
\|\varphi\|_{L^{N/(N-1)}(\R^N;{E})}\leq \kappa(N)\sum_{|\alpha|=1} \|\partial^\alpha\varphi\|_{L^1(\R^N;{E})} =  \kappa(N)\sum_{|\alpha|=1} \|\partial^\alpha\varphi\|_{L^1(\Omega;E)},
$$
and hence we find, for all $\varphi\in\calD_K(\Omega)$:
$$
\left|\int_\Omega f\bar{\varphi}\right|\leq \kappa(N) \|f\|_{L^N(K;{E^*})} \|\varphi\|_{W^{1,1}(\Omega;{E})}.
$$
If $k=2$, then for all $|\alpha|=1$ we have, using H\"older's inequality and SGN inequality again:
\begin{multline*}
 \|\partial^\alpha\varphi\|_{L^1(\Omega;{ E})}= \|\partial^\alpha\varphi\|_{L^1(K;{E})}\leq |K|^{\frac 1N} \|\partial^\alpha\varphi\|_{L^{N/(N-1)}(\R^N;{E})}\\\leq |K|^{\frac 1N} \sum_{|\beta|=1} \|\partial^{\alpha+\beta}\varphi\|_{L^1(\R^N;{E})}=|K|^{\frac 1N} \sum_{|\beta|=1} \|\partial^{\alpha+\beta}\varphi\|_{L^1(\Omega;{E})},
 \end{multline*}
 which implies, for all $\varphi\in\calD_K(\Omega)$:
 $$
 \left|\int_\Omega f\bar{\varphi}\right|\leq \kappa(N)\|f\|_{L^N(K;{E^*})}|K|^{\frac 1N} \sum_{|\alpha|=2} \|\partial^\alpha \varphi\|_{L^1(\Omega;{E})}=
  \kappa(N)\|f\|_{L^N(K;{E^*})}|K|^{\frac 1N}\|\varphi\|_{W^{2,1}(\Omega;{E})}.
  $$
 One proves inductively for a general $k\in\N^*$ one has:
   $$
 \left|\int_\Omega f\bar{\varphi}\right|\leq \kappa(N)\|f\|_{L^N(K;{E^*})}|K|^{\frac{k-1}{N}}\|\varphi\|_{W^{k,1}(\Omega;{E})}
 \leq \kappa(N)\|f\|_{L^N(K;{E^*})} |K|^{\frac kN } \|\varphi\|_{W^{k,N/(N-1)}(\Omega;{E})},
  $$
where H\"older's inequality is used again; this means finally that one has $f\in W^{-k,N}_{\mathrm{loc}}(\Omega;{E^*})$.
\end{Example}

\begin{Remark}\label{rmk.sobolev}
Using approximation by smooth functions and a recursive use of the SGN  inequality as done in the above example, one shows that, given $K\subset\subset\Omega$ a compact set and an integer $k\in\N$, there exists a constant $C(K,k)>0$ such that for any $g\in W^{k, N/(N-1)}(\Omega;{E})$ satisfying $\supp g\subseteq K$, one has $g\in \bW^{k,N/(N-1)}_{0}(\Omega;{E})$ and:
$$
\|g\|_{\bW^{k,N/(N-1)}(\Omega;{E})}\leq C(K,k) \|g\|_{W^{k,N/(N-1)}(\Omega;{E})}.
$$
\end{Remark}


%
%
\section{Canceling and elliptic differential operators}\label{sec0}

Given $A(x,D)$ as before, the $2\nu$-order differential operator $\Delta_{\bsA}:=A^{\ast}(\cdot,D)\circ A(\cdot,D)$ may be regarded as an elliptic pseudodifferential operator with symbol in the H\"ormander class $S^{2\nu}(\Omega):=S^{2\nu}_{1,0}(\Omega)$\footnote{{Since we will only work with symbols of type (1,0), the type
will be omitted in the notation; concerning pseudo-differential operators we
refer, for instance, to \cite[Chapter 3]{Hor} and \cite{Taylor}.}}, so that there exist properly supported pseudodifferential operators $q, \tilde{q} \in OpS^{-2\nu}(\Omega)$ {(parametrixes)} and $r, \tilde{r} \in OpS^{-\infty}(\Omega)$ for which one hasl, for any $f \in C^{\infty}(\Omega,F)$:
\begin{equation}\label{pseudo}
\Delta_{\bsA} q(x,D)f+r(x,D)f=\tilde{q}(x,D)\Delta_{\bsA}f+\tilde{r}(x,D)f=f.
\end{equation}
Writing $\Delta_{\bsA} q(x,D)f={A^{*}}(x,D)u$ for $u=A(x,D)q(x,D)f$ we then get: 
\begin{equation}\nonumber
{A^{*}}(x,D)u-f=r(x,D)f
\end{equation}
for every  $f \in C^{\infty}(\Omega,F)$.

{
\begin{proposition}\label{prop.gradmod}
Assume that $A(x,D)$ is as before. Then for every point $x_{0} \in \Omega$ and any $0<\beta<1$, there exist an open ball $B=B(x_{0},\ell) \subset \Omega$ and a constant $C=C(B)>0$ such that, for all $\varphi\in\calD(B,E)$, one has:
\begin{equation}\label{jh}
\sum_{|\alpha|=\nu-1} \|\partial^\alpha \varphi \|_{{1-\beta,1}}\leq C\| A(x,D)\varphi\|_{L^{1}}.
\end{equation}
\end{proposition}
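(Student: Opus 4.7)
The idea is to exploit the pseudodifferential parametrix already introduced in \eqref{pseudo}. Since $A(x,D)$ is elliptic of order $\nu$, the composition $\Delta_{\bsA}=A^{*}(\cdot,D)A(\cdot,D)$ is elliptic of order $2\nu$, so on a sufficiently small ball $B=B(x_{0},\ell)\subset\Omega$ we have at our disposal a properly supported parametrix $\tilde q(x,D)\in OpS^{-2\nu}$ and a smoothing remainder $\tilde r(x,D)\in OpS^{-\infty}$ with $\tilde q(x,D)\Delta_{\bsA}+\tilde r(x,D)=I$. Applied to $\varphi\in\calD(B,E)$ and then differentiated, this identity yields, for each multi-index $|\alpha|=\nu-1$,
\[
\partial^{\alpha}\varphi \;=\; T_{\alpha}\bigl[A(x,D)\varphi\bigr]\;-\;\partial^{\alpha}\tilde r(x,D)\varphi,
\]
where $T_{\alpha}:=\partial^{\alpha}\tilde q(x,D)A^{*}(x,D)$ is a properly supported pseudodifferential operator of order $(\nu-1)-2\nu+\nu=-1$, and $\partial^{\alpha}\tilde r(x,D)$ is still smoothing.

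\textbf{Core estimate.} The central step is to prove $T_{\alpha}\colon L^{1}(B,F)\to W^{1-\beta,1}$ boundedness. Realizing $\|\cdot\|_{1-\beta,1}$ through the composition with $(1-\Delta)^{(1-\beta)/2}$ (Bessel potential, or equivalently through the Gagliardo seminorm, which is enough for our purposes), one sees that $(1-\Delta)^{(1-\beta)/2}\circ T_{\alpha}$ is a properly supported pseudodifferential operator of strictly negative order $-\beta$. Its Schwartz kernel is locally controlled by $|x-y|^{-(N-\beta)}$ near the diagonal and has compact support in one variable by proper supportedness; hence it is integrable in each variable uniformly in the other, and a standard Schur-type argument gives the desired $L^{1}\to L^{1}$ boundedness, so that
\[
\|T_{\alpha}g\|_{1-\beta,1}\;\leq\;C\|g\|_{L^{1}}\qquad(g\in L^{1}(B,F)).
\]

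\textbf{Remainder and absorption.} The smoothing operator $\partial^{\alpha}\tilde r(x,D)$ maps $L^{1}$ continuously into every Sobolev space, so $\|\partial^{\alpha}\tilde r(x,D)\varphi\|_{1-\beta,1}\lesssim\|\varphi\|_{L^{1}}$. To eliminate this lower-order contribution, I would invoke Theorem \ref{thm.hp} on a further shrinking of $B$ (so that $\|\varphi\|_{W^{\nu-1,N/(N-1)}}\lesssim\|A(x,D)\varphi\|_{L^{1}}$ holds), and combine it with Sobolev embedding and H\"older's inequality on the compact support of $\varphi\in\calD(B,E)$ (as in Example \ref{ex.sob-neg} and Remark \ref{rmk.sobolev}) to deduce $\|\varphi\|_{L^{1}}\lesssim\|A(x,D)\varphi\|_{L^{1}}$. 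Combining the three estimates and summing over $|\alpha|=\nu-1$ yields \eqref{jh}.

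\textbf{Main obstacle.} The delicate point is the $L^{1}$-boundedness of the strictly negative-order pseudodifferential operator $(1-\Delta)^{(1-\beta)/2}T_{\alpha}$. At order zero such a bound would typically fail (one only gets weak-$L^{1}$), so the strict negativity $\beta>0$ is essential: this is what forces the restriction $\beta<1$ in the statement and explains why one cannot push the regularity up to a full derivative in the $L^{1}$ endpoint. A secondary, but purely routine, technical issue is to coordinate the successive shrinkings of $B$ needed for the parametrix construction, for the validity of Theorem \ref{thm.hp}, and for the absorption of the $L^{1}$ remainder.
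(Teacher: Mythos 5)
Your proposal follows the same blueprint as the paper's proof: invert $\Delta_{\bsA}$ modulo smoothing via the parametrix \eqref{pseudo}, compose $\partial^\alpha$ (for $|\alpha|=\nu-1$) with the parametrix and a Bessel-type operator of order $1-\beta$ so that the principal term is a properly supported pseudodifferential operator of strictly negative order $-\beta$ acting on $A(x,D)\varphi$, invoke $L^1\to L^1$ boundedness of such operators (the paper cites \cite[Theorem~6.1]{MP}; your kernel/Schur sketch is exactly the mechanism underlying that lemma), and finally control the smoothing remainder using the a priori estimate of Theorem~\ref{thm.hp} on a shrunken ball. The only cosmetic discrepancies are a sign in your decomposition (it should be $+\,\partial^\alpha\tilde r(x,D)\varphi$) and the paper's use of the properly supported, cut-off version $\Lambda_{1-\beta}$ in place of the global Bessel potential $(1-\Delta)^{(1-\beta)/2}$, which is immaterial here since $\varphi$ has compact support in $B$.
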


The previous inequality states the embedding into $L^1$ of some version of a fractional Sobolev space $W^{1-\beta,1}_{c}(B)$ that can be defined according to the following procedure. Given $B=B(x_0,\ell)$ a ball consider $\tilde{B}=B(x_0,2\ell)$ the ball with the same center as $B$ but twice its radius. Let $\psi \in \calD(\tilde{B})$ satisfy $\psi(x) \equiv 1$ on $B$ and define  $\Lambda_{\gamma} := \Lambda_{\gamma}(x,D)$ the pseudodifferential operator with symbol $\lambda_{\gamma}(x,\xi)=\psi(x)(1+4\pi^{2}|\xi|^{2})^{\gamma} \in S^{\gamma}(\R^N)$. 
Denote then by $W^{\gamma,p}_{c}(B)$ the set of distributions with compact support $f \in \mathcal{E}'(B)$ such that one has $\Lambda_{{\gamma}}f \in \LL^{p}(\R^{N})$; one endows it with the semi-norm $\|f\|_{{\gamma,p}}\;:=\;\|\Lambda_{{\gamma}}u\|_{p}$. Note that the space  $W^{\gamma,p}_{c}(B)$ is independent of the choice of $\psi$, \textit{i.e.} that if $\psi_{2}(x), \psi_{1}(x) \in \calD(\widetilde{B})$ satisfy $\psi_{1}(x)=\psi_{2}(x) \equiv 1$ on $B$, then $\|\Lambda_{\gamma, \psi_{1}}f\|_{L^{p}}\; \cong \;\ \|\Lambda_{\gamma, \psi_{2}}f\|_{L^{p}}$.



\begin{proof}
Fix $\alpha$ a multi-index with $|\alpha|=\nu-1$.
Let $h=A(x,D)\varphi$. Thanks to identity \eqref{pseudo} and to the calculus of pseudodifferential operators we have 
$$\Lambda_{1-\beta}\partial^\alpha\varphi=p(x,D)h+r'(x,D)\varphi,$$
where  $p(x,D):=\Lambda_{1-\beta} \partial^\alpha q(x,D) {A^{\ast}}(x,D) \in OpS^{-\beta}$ 
and $r'(x,D):=\Lambda_{1-\beta} \partial^\alpha r(x,D) \in OpS^{-\infty}$. As a consequence of \cite[Theorem 6.1]{MP}  we have $\|p(x,D)h\|_{L^{1}}\lesssim \| A(x,D)\varphi\|_{L^{1}}$, which implies:
 \begin{equation}\nonumber
\| \Lambda_{\beta-1}\partial^\alpha\varphi\|_{L^{1}}\leq C\| A(x,D)\varphi\|_{L^{1}} +\|r'(x,D)\varphi\|_{L^{1}}.
\end{equation}
As the second term on the right side may be absorbed (see \cite[p.~798]{HP1}), shrinking the neighborhood if necessary, we obtain the estimate \eqref{jh} after recalling estimate \eqref{1.11} in Theorem~\ref{thm.hp} above.
\end{proof}}

\section{Functions of bounded variation associated to $A(x,D)$}\label{sec1}

\subsection{Basic definitions; approximation and compactness}
Let $W^{k,p}_{c}(\Omega;E)$ be the linear space of all complex functions in $W^{k,p}(\Omega;E)$  whose support is a compact subset of $\Omega$.

The following definition of variation associated to $A(x,D)$ of $g\in W^{\nu-1,1}_c(\Omega; E)$ recalls the classical definition of variation when $\nu=1$ and $A(x,D)=\nabla$. It has been formulated for real vector fields by N. Garofalo and D. Nhieu \cite{GAROFALONHIEU} and adapted for complex vector fields in \cite{MP}.
\begin{Definition}
Given $g\in W^{\nu-1,1}_c(\Omega;E)$ \text and $U\subseteq\Omega$ an open set, one calls the extended real number:
$$
\|D_{\bsA} g\|(U):=\sup\left\{\left|\int_\Omega g\,\overline{{A^*(\cdot,D)} v}\right|: v\in C^\infty_c(\Omega; F^*), \supp v\subseteq U, \|v\|_\infty\leq 1\right\},
$$
the \textit{(total) $\bsA$-variation associated to $A(x,D)$ of $g$ in $U$} and we let $\|D_\bsA g\|:=\|D_\bsA g\|(\Omega)$ in case there is no ambiguity on the open set $\Omega$.
We denote by $BV_{\mathcal{A},c}(\Omega)$ the set of all $g\in W^{\nu-1,1}_c(\Omega;E)$ with $\|D_{\bsA} g\|<+\infty$.

Given $g\in BV_{\mathcal{A},c}(\Omega)$, we denote by $D_\mathcal{A} g$ the unique $F$-valued Radon measure satisfying:
\begin{equation}\label{eq.def-mes}
\int_\Omega g\,\overline{A^*(\cdot,D) v}=\int_\Omega \bar{v}\cdot d [D_\bsA g],
\end{equation}
for all $v\in C^\infty_c(\Omega,F^*)$. It is clear by definition that $\|D_\bsA g\|$ is also the total variation in $\Omega$ of $D_\bsA g$.
\end{Definition}

\begin{Remark}Given $g\in BV_{\bsA,c}(\Omega)$, one has {$\supp D_\bsA g\subseteq \supp g$}. Indeed, given $x\in \Omega\setminus\supp g$, find a radius $r>0$ for which one has $B(x,r)\subseteq \Omega\setminus \supp g$. It is clear according to (\ref{eq.def-mes}) that for any $v\in C^\infty_c(B(x,r),F^*)$ we then have $D_\bsA g(v)=0$. Hence we also get $D_\bsA g(v)=0$ for all $v\in C_c(B(x,r),F^*)$, which ensures that one has $x\notin\supp D_\bsA g$ and finishes to show the inclusion $\supp D_\bsA g\subseteq\supp g$.
\end{Remark}
\begin{Remark}\label{rmk.sci}
It follows readily from the previous definition that  if $(g_i)\subseteq BV_{\bsA,c}(\Omega)$ converges in $\LL^1$ to $g\in W^{\nu-1,1}_c(\Omega)$, one then has $g\in BV_{\bsA,c}(\Omega)$ and:
$$
\|D_\bsA g\|\leq\liminfe_{i}\|D_\bsA g_i\|.
$$
We shall refer to this in the sequel as the \textit{lower semi-continuity} of the $\bsA$-variation.
\end{Remark}

We say that a sequence $(f_{i})_{i}$ of functions with complex values defined on open set $\Omega \subset \R^{N}$ is \textit{compactly supported in $\Omega$} if there is a compact set $K \subset\subset \Omega$ such that one has $\supp f_i \subseteq K$ for every $i$.

We shall make an extensive use of the following concept of convergence.
\begin{Definition}\label{def.dc}
Given $g \in W^{\nu-1,1}_{c}(\Omega;E)$ and a sequence $(\varphi_i)_{i}\subseteq\calD(\Omega; E)$ we shall write $\varphi_i \twoheadrightarrow g $ in case the following conditions hold:
\begin{enumerate}
\item[(i)] $(\varphi_{i})$ converges to $g$ in $W^{\nu-1,1}$ norm;
\item[(ii)] $(\varphi_i)$ is compactly supported in $\Omega$;
\item[(iii)] $\sup_i\| A \varphi_{i}\|_1<+\infty$.
\end{enumerate}
\end{Definition}

The following lemma is a Friedrich's type lemma; in the case where $A$ is a system of real vector fields, it reminds a result by N. Garofalo and D. Nhieu \cite[Lemma~A.3]{GAROFALONHIEU}.
In order to state it, fix $\eta\in\calD(\R^n)$ a radial function with nonnegative values, satisfying $\supp \eta\subseteq B[0,1]$ and $\int_{\R^n} \eta=1$, and, for each $\epsilon>0$, define $\eta_\epsilon\in\calD(\R^n)$ by $\eta_\epsilon(x):=\epsilon^{-N}\eta(x/\epsilon)$.
\begin{Lemma}\label{lemma4.5}
\ For any $g\in BV_{\bsA,c}(\Omega)$, one has:
$$
\lim_{\epsilon\to 0} \| A(\eta_\epsilon * g)-\eta_\epsilon*(D_{\bsA} g)\|_{L^1(\Omega)}=0.
$$
\end{Lemma}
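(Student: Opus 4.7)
I plan to prove the lemma by duality. The function $R_\epsilon := A(\eta_\epsilon * g) - \eta_\epsilon * D_\bsA g$ is smooth with compact support in $\Omega$ for $\epsilon$ small enough, so
$$\|R_\epsilon\|_{L^1} = \sup\{|\textstyle\int R_\epsilon \cdot \bar v| : v \in \calD(\Omega; F^*),\, \|v\|_\infty \leq 1\}.$$
The first step is to derive the key duality identity
$$\int R_\epsilon \cdot \bar v = \int g \cdot \overline{[\eta_\epsilon *, A^*] v},$$
valid for any $v \in \calD(\Omega; F^*)$ and any $\epsilon$ small enough that $\eta_\epsilon * v$ remains compactly supported in $\Omega$; it follows from two applications of Fubini (using that $\eta_\epsilon$ is radial, hence even) together with the defining property of $D_\bsA g$.

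Next, writing $A^*(\cdot, D) = \sum_{|\beta| \leq \nu} c_\beta \partial^\beta$ for smooth coefficients and introducing the commutator $T_\epsilon^{(b)} u := \eta_\epsilon * (bu) - b(\eta_\epsilon * u)$, the commutator above decomposes as $\sum_\beta T_\epsilon^{(c_\beta)}(\partial^\beta v)$. The algebraic identity $\int g \cdot T_\epsilon^{(b)} u = -\int u \cdot T_\epsilon^{(b)} g$ (an immediate consequence of Fubini and the evenness of $\eta_\epsilon$), followed by integration by parts on $v$, gives the pointwise identity $R_\epsilon = \sum_{|\beta| \leq \nu}(-1)^{|\beta|+1}\partial^\beta T_\epsilon^{(\bar c_\beta)} g$, so it will suffice to prove $\|\partial^\beta T_\epsilon^{(\bar c_\beta)} g\|_{L^1} \to 0$ for each $\beta$. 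Expanding via the Leibniz rule, each such quantity becomes a linear combination of terms $T_\epsilon^{(b)}(\partial^\gamma g)$ for $\gamma \leq \beta$ and smooth coefficients $b$; when $|\gamma| \leq \nu - 1$, the derivative $\partial^\gamma g$ belongs to $L^1$ and the classical pointwise Friedrichs estimate $|T_\epsilon^{(b)}u(x)| \leq \omega_b(\epsilon)(\eta_\epsilon * |u|)(x)$ immediately yields $L^1$-convergence to zero.

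The main obstacle is the critical case $|\gamma| = |\beta| = \nu$, in which $\partial^\gamma g$ is only a distribution. To handle it, I write $\gamma = \gamma' + e_i$ with $|\gamma'| = \nu - 1$ and integrate by parts once through the distributional pairing, expressing the commutator as
$$T_\epsilon^{(b)}(\partial^\gamma g)(x) = -\eta_\epsilon * (\partial_i b \cdot \partial^{\gamma'} g)(x) + \int [b(y) - b(x)](\partial_i \eta_\epsilon)(x - y)\, \partial^{\gamma'} g(y)\, dy,$$
in which each integrand now involves the $L^1$ function $\partial^{\gamma'} g$. A Taylor expansion of $b$ around $x$, combined with the moment identity $\int u_k (\partial_i \eta)(u)\, du = -\delta_{ki}$ (obtained by one integration by parts against the compactly supported $\eta$), shows that these two terms have respective $L^1$-limits $-\partial_i b \cdot \partial^{\gamma'} g$ and $+\partial_i b \cdot \partial^{\gamma'} g$. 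They cancel, and the remaining higher-order Taylor contributions tend to zero in $L^1$ since $|b(y) - b(x) - \nabla b(x)\cdot(y-x)| = O(\epsilon^2)$ absorbs the $\epsilon^{-N-1}$ singularity of $\partial_i \eta_\epsilon$. Summing all contributions over $\beta$ then yields the claimed $L^1$-convergence $\|R_\epsilon\|_{L^1} \to 0$.
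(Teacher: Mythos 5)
Your proof is correct, and it takes a route that is genuinely different in its implementation from the one in the paper, even though both ultimately rest on the same Friedrichs-type phenomenon. The paper reduces to a single monomial $A = a\,\partial^\alpha$, factors $A = A'\partial_j$, passes one derivative directly onto $g$ through the identity $[A,\eta_\epsilon*(\cdot)]g=[A',\eta_\epsilon*(\cdot)]\,\partial_j g$, and then inducts on the order, invoking Garofalo--Nhieu's Lemma~A.3 as a black box for the first-order base case. You instead transpose the whole computation to the adjoint side via the duality identity $\int R_\epsilon\cdot\bar v=\int g\cdot\overline{[\eta_\epsilon*,A^*]v}$, arrive at the pointwise representation $R_\epsilon=\sum_\beta(-1)^{|\beta|+1}\partial^\beta T^{(\bar c_\beta)}_\epsilon g$, expand by Leibniz, and treat the critical terms (exactly the ones with $\gamma=\beta$, $|\beta|=\nu$, where $\partial^\gamma g$ is only a distribution) by a hands-on Taylor expansion combined with the moment identity $\int u_k(\partial_i\eta)\,du=-\delta_{ik}$; the two first-order contributions cancel, and the quadratic remainder is $O(\epsilon)$. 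The trade-off: the paper's proof is shorter but outsources the essential commutator cancellation to a cited lemma, while yours is longer but wholly self-contained and exhibits the cancellation explicitly. One small point worth making precise in a writeup: when invoking the $L^1$-duality $\|R_\epsilon\|_{L^1}=\sup\{|\int R_\epsilon\bar v|\}$, you should restrict the test functions $v$ to be supported in a fixed compact neighborhood of $\supp R_\epsilon$ (which does not change the supremum) so that $\eta_\epsilon*v$ remains compactly supported in $\Omega$ and the duality between $A$ and $A^*$ applies; you gesture at this but it deserves a sentence.
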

\begin{proof}
We can assume for simplicity $E=\R^{n}$ and $F=\R^{n'}$.
Let us first assume that $\nu=1$ and write $A=c+\sum_{j=1}^m a_j\partial_j$ where $c$ and $a_j$, $1\leq j\leq m$ are locally Lipschitz functions. Write now, for $x\in\Omega$ and $\epsilon>0$ small enough so that one has $|c(y)-c(z)|\leq \delta$ for all $y,z\in\Omega$ with $|y-z|\leq\epsilon$:
$$
|c(x)(\eta_\epsilon*g)(x)-[\eta_\epsilon* (cg)](x)|\leq \int_{B(x,\epsilon)} |c(x)-c(y)||g(y)|\eta_\epsilon(x-y)\,dy\leq\delta \int_{B(x,\epsilon)} |g(y)|\eta_\epsilon(x-y)\,dy.
$$
We hence have, for $\epsilon>0$ small enough, using Fubini's theorem:
$$
\|c(\eta_\epsilon*g)-\eta_\epsilon*(cg)\|_{L^1(\Omega)} \leq \delta \|g\|_{L^1(\Omega)}.
$$
Writing now $L=\sum_{j=1}^N a_j\partial_j$, it now follows from \cite[Lemma~A.3]{GAROFALONHIEU} that one also has:
$$
\lim_{\epsilon\to 0}\| L(\eta_\epsilon*g)-\eta_\epsilon*(Lg)\|_{L^1(\Omega)}=0.
$$
This finishes the proof in case $\nu=1$.

In case $\nu>1$, it suffices to consider the case $A=a \partial^\alpha$ for some multi-index $\alpha\in\N^n$ with $|\alpha|\leq\nu$, and some locally Lipschitz function $a$. The case $|\alpha|\leq 1$ being already dealt with using the preceding part of the proof, we can assume $A=A'\partial_j$ where $A'=a'\partial^{\alpha'}$ for $|\alpha'|\geq 1$. Since we can now write (using the notation $[\cdot,\cdot]$ for the commutator):
$$
[A,\eta_\epsilon * (\cdot)]g=[A',\eta_\epsilon*(\cdot)] \partial_i g,
$$
it follows inductively from the fact that $\partial_i g\in W_{c}^{\nu-2,1}(\Omega)\subseteq L_{c}^1(\Omega)$ and from the preceding part of the proof, that one has:
$$
\|[A,\eta_\epsilon * (\cdot)]g\|_{L^1(\Omega)}\to 0, \quad\epsilon\to 0.
$$
The proof is complete.
\end{proof}

We now obtain an analogous result, in $BV_{\bsA,c}$, to the standard approximation theorem for $BV_c$ functions. 
\begin{lemma}\label{lemmaaprox}

For any $g \in BV_{\bsA,c}(U)$, there exists a sequence $\left\{ \varphi_{i} \right\}_{i} \subset \calD(U) $ such that one has $\varphi_i\twoheadrightarrow g$ and, moreover:
$$
\|D_\bsA g\|=\lim_i\| A(\cdot,D) \varphi_{i}\|_1.
$$
\end{lemma}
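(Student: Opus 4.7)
The plan is to realize the approximation by mollification. Fix $g\in BV_{\bsA,c}(U)$ and set $K:=\supp g\subset\subset U$. Choose $\epsilon_0>0$ small enough so that $K_{\epsilon_0}:=K+B[0,\epsilon_0]$ is a compact subset of $U$; for each $0<\epsilon<\epsilon_0$, define $\varphi_\epsilon:=\eta_\epsilon * g$ (componentwise convolution), which is then a smooth $E$-valued function supported in $K_{\epsilon_0}$, hence an element of $\calD(U;E)$. I will show that any sequence $\varphi_{\epsilon_i}$ with $\epsilon_i\to 0$ does the job.

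The three conditions of Definition~\ref{def.dc} will be checked as follows. Condition (ii) is immediate since $\supp \varphi_\epsilon\subseteq K_{\epsilon_0}$ for every $\epsilon$. For (i), note that for any multi-index $\alpha$ with $|\alpha|\leq \nu-1$, one has $\partial^\alpha g\in L^1(U;E)$ and $\partial^\alpha \varphi_\epsilon=\eta_\epsilon*\partial^\alpha g$, so the standard $L^1$-theory of mollification yields $\partial^\alpha\varphi_\epsilon\to\partial^\alpha g$ in $L^1$, which gives $\varphi_\epsilon\to g$ in the $W^{\nu-1,1}$ seminorm. For (iii), and in fact the sharper equality, invoke Lemma~\ref{lemma4.5}: one has
\[
\bigl\|A(\cdot,D)\varphi_\epsilon-\eta_\epsilon*(D_{\bsA} g)\bigr\|_{L^1(U)}\longrightarrow 0\quad(\epsilon\to 0).
\]
By Fubini applied to the finite positive total-variation measure $|D_{\bsA} g|$, one has $\|\eta_\epsilon*(D_{\bsA} g)\|_{L^1(\R^N)}\leq \|D_{\bsA} g\|$, so combining with the previous display gives
\[
\limsupe_{\epsilon\to 0}\|A(\cdot,D)\varphi_\epsilon\|_{L^1}\leq \|D_{\bsA} g\|,
\]
which ensures (iii).

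The matching lower bound comes from Remark~\ref{rmk.sci}: since $\varphi_\epsilon\to g$ in $L^1$ and each $\varphi_\epsilon$ is smooth (so that $D_{\bsA}\varphi_\epsilon$ is the absolutely continuous measure with density $A(\cdot,D)\varphi_\epsilon$, of total mass $\|A(\cdot,D)\varphi_\epsilon\|_{L^1}$), the lower semi-continuity of the $\bsA$-variation yields
\[
\|D_{\bsA} g\|\leq \liminfe_{\epsilon\to 0}\|D_{\bsA}\varphi_\epsilon\|=\liminfe_{\epsilon\to 0}\|A(\cdot,D)\varphi_\epsilon\|_{L^1}.
\]
Combining the two estimates proves $\lim_{\epsilon\to 0}\|A(\cdot,D)\varphi_\epsilon\|_{L^1}=\|D_{\bsA} g\|$, and extracting any sequence $\epsilon_i\to 0$ produces the desired $\varphi_i:=\varphi_{\epsilon_i}$. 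The only genuinely nontrivial input is the Friedrich-type commutator estimate, which is precisely the content of the already-established Lemma~\ref{lemma4.5}; once this is available, the rest reduces to standard properties of mollification and convolution of a Radon measure with an $L^1$-normalized kernel.
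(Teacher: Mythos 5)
Your proof is correct and follows essentially the same route as the paper: mollify $g$, invoke Lemma~\ref{lemma4.5} to control $A\varphi_\epsilon - \eta_\epsilon*(D_\bsA g)$, bound $\|\eta_\epsilon*(D_\bsA g)\|_{L^1}$ by $\|D_\bsA g\|$, and close the argument with the lower semicontinuity of Remark~\ref{rmk.sci}. The only (minor) difference is the upper bound $\|\eta_\epsilon*(D_\bsA g)\|_{L^1}\le\|D_\bsA g\|$, which you obtain directly by Tonelli/Young's inequality for the convolution of a Radon measure with an $L^1$-normalized kernel, whereas the paper derives it by $L^1$--$L^\infty$ duality against test vector fields $v$ (using radiality of $\eta$ to move the mollifier across the pairing $\langle \eta_\epsilon*(D_\bsA g),\bar v\rangle=\langle D_\bsA g,\eta_\epsilon*\bar v\rangle$ and then unwinding the definition of $D_\bsA g$); your route is marginally more direct and does not actually rely on $\eta$ being even.
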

\begin{proof}

Fix now $g\in BV_{\bsA,c}(\Omega)$ and define for $0<\epsilon<\dist(\supp g, \complement\Omega)$ a function $g_\epsilon\in\calD(\Omega)$ by the formula:
$$
g_\epsilon:=\eta_\epsilon* g.
$$
It is easy to see that one has $g_\epsilon\to g$ in $L^1(\Omega)$ and that there exists a compact set $K\subset\subset\Omega$ such that one has $\supp g_\epsilon\subseteq K$ for all $\epsilon>0$ small enough.

On the other hand, observe that according to the previous lemma, one can write for $\epsilon>0$ small enough:
\begin{equation}\label{eq.friedrichs}
A(\eta_\epsilon*g)=\eta_\epsilon *(D_\bsA g)+H_{\epsilon}(g),
\end{equation}
where also $\|H_{\epsilon} (g)\|_1\to 0$, $\epsilon\to 0$.

Fix now $v\in C^\infty_c(\Omega, F^*)$ a smooth vector field satisfying $\|v\|_\infty\leq 1$ and compute for $\epsilon>0$ small enough:
{
$$
\la \eta_\epsilon *(D_\bsA g),\bar{v}\ra:=\la D_{\bsA} g, \eta_\epsilon*\bar{v}\ra=\int_\Omega \overline{\eta_\epsilon*v}\cdot d[D_\bsA g]=\int_\Omega g \overline{A^*(\cdot, D)(\eta_\epsilon*v)}.
$$
so that one also has:
}
$$
\left|\int_\Omega A (\eta_\epsilon*g)\cdot \bar{v}\right|\leq \left|\int_\Omega g\,\overline{A^* (\eta_\epsilon*v)}\right|+\|H_\epsilon(g)\|_1\leq \|D_{\bsA} g\|+\|H_\epsilon(g)\|_1.
$$

We hence get, by duality:
$$
\|A(\eta_\epsilon* g)\|_1\leq\|D_\bsA g\|+\|H_\epsilon(g)\|_1,
$$
and the result follows from the aforementioned property of $H_\epsilon(g)$ when $\epsilon$ approaches $0$, and from the lower semicontinuity property already mentioned.
\end{proof}

The following proposition is a compactness result in $BV_{\bsA}$.
\begin{Proposition}\label{prop.compacite}
Assume that the open set $U\subseteq \Omega$ supports a SGN inequality of the type appearing in [Theorem~\ref{thm.hp}, {\eqref{hp3ineq}}] as well as an inequality of type \eqref{jh} for some $0<\beta<1$. If $(g_i)\subseteq BV_{\bsA,c}(U)$ is compactly supported in $U$ and if moreover one has:
$$
\sup_{i} \|D_\bsA g_i\|<+\infty,
$$
then there exists $g\in BV_{\bsA,c}(U)$ and a subsequence $(g_{i_k})\subseteq (g_i)$ converging to $g$ in $W^{\nu-1,1}$.
\end{Proposition}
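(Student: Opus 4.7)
The plan is to reduce the compactness question for the sequence $(g_i) \subseteq BV_{\bsA,c}(U)$ to a compactness question for a sequence of smooth, uniformly compactly supported test functions, and then to exploit the fractional regularity provided by Proposition \ref{prop.gradmod} together with a compact embedding $W^{1-\beta,1}_c \hookrightarrow L^1$.

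First I would use Lemma \ref{lemmaaprox} to produce, via a diagonal extraction, a sequence $\tilde\varphi_i \in \calD(U,E)$ such that $\|\tilde\varphi_i - g_i\|_{W^{\nu-1,1}} < 1/i$ and $\|A(\cdot,D)\tilde\varphi_i\|_{L^1} \le \|D_\bsA g_i\| + 1 \le C$. Because the $g_i$ are uniformly compactly supported in some $K \subset\subset U$, by picking sufficiently small mollification parameters in the proof of Lemma \ref{lemmaaprox} one may assume that all the $\tilde\varphi_i$ have support in a common compact set $K' \subset\subset U$. It will then suffice to extract a subsequence of $(\tilde\varphi_i)$ converging in $W^{\nu-1,1}$: the limit $g$ will satisfy $\|g_i - g\|_{W^{\nu-1,1}} \to 0$ by the triangle inequality, and will belong to $BV_{\bsA,c}(U)$ by the lower semicontinuity property recorded in Remark \ref{rmk.sci}.

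Next I would apply estimate \eqref{jh} to each $\tilde\varphi_i$, yielding
\[
\sum_{|\alpha|=\nu-1}\|\partial^\alpha\tilde\varphi_i\|_{1-\beta,1} \lesssim \|A(\cdot,D)\tilde\varphi_i\|_{L^1} \le C,
\]
uniformly in $i$. Since all $\partial^\alpha \tilde\varphi_i$ are supported in $K'$, a Rellich-Kondrachov type compactness for the fractional Sobolev space $W^{1-\beta,1}_c$ (defined in Section \ref{sec0} via the operator $\Lambda_{1-\beta}$), combined with a finite diagonal extraction over the multi-indices $|\alpha|=\nu-1$, provides a subsequence along which $\partial^\alpha \tilde\varphi_i$ converges in $L^1$ for every $|\alpha|=\nu-1$. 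Finally, iterating the Sobolev-Gagliardo-Nirenberg inequality on compactly supported functions (as done in Example \ref{ex.sob-neg}) gives, for every multi-index $|\beta|<\nu-1$,
\[
\|\partial^\beta (\tilde\varphi_i - \tilde\varphi_j)\|_{L^1} \lesssim \sum_{|\alpha|=\nu-1} \|\partial^\alpha (\tilde\varphi_i - \tilde\varphi_j)\|_{L^1},
\]
which upgrades the previous $L^1$-convergence of top-order derivatives to the desired convergence of $\tilde\varphi_i$ in $W^{\nu-1,1}$.

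The main obstacle lies in the fractional Rellich-Kondrachov step: one has to verify carefully that, with the pseudodifferential definition of $W^{1-\beta,1}_c(B)$ adopted in Section \ref{sec0}, the embedding into $L^1$ is compact on the subspace of functions supported in a fixed compact set. This should follow from the Kolmogorov-Riesz-Fréchet criterion applied to the smoothing action of $\Lambda_{\beta-1}$, but it is the technical step that genuinely distinguishes this higher-order setting from the first-order, vector-field situation of \cite{MP} and therefore deserves the most attention.
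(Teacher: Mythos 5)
Your proposal is correct and follows the same overall blueprint as the paper's proof: approximate the $g_i$ by compactly supported test functions $\tilde\varphi_i$ via Lemma~\ref{lemmaaprox}, apply \eqref{jh} to obtain a uniform bound in $W^{1-\beta,1}_c$, extract a subsequence by exploiting the compact embedding $W^{1-\beta,1}_c(U)\subset\subset L^1(U)$, and pass to the limit using lower semicontinuity (Remark~\ref{rmk.sci}). The fractional compactness step you worry about is exactly the ingredient the paper imports from \cite[Theorem~6.2]{MP}; you are right that this is where the higher-order and pseudodifferential machinery makes itself felt, and the paper simply delegates the verification to the earlier reference.

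There is one genuine, if minor, divergence at the identification step. After obtaining $L^1$-convergence of the top-order derivatives $\partial^\alpha\tilde\varphi_{i_k}$ for each $|\alpha|=\nu-1$, the paper also extracts (by Rellich--Kondrachov in Ziemer) a further subsequence converging in $L^1$ at the function level, and then invokes the closing lemma of Willem to identify the $L^1$-limits $h^\alpha$ with $\partial^\alpha g$. You instead bound the lower-order derivatives of a difference $\tilde\varphi_i-\tilde\varphi_j$ by its top-order derivatives via an iterated Sobolev--Gagliardo--Nirenberg argument on compactly supported functions (exactly as in Example~\ref{ex.sob-neg}), so that the subsequence becomes Cauchy in the full inhomogeneous norm $\|\cdot\|_{\bW^{\nu-1,1}}$ on a fixed compact, and convergence in $W^{\nu-1,1}$ follows by completeness without any appeal to the closing lemma. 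This is slightly more self-contained: it avoids both the second compactness extraction and Willem's lemma, relying only on tools already developed in Section~2. Either route is valid, and yours is arguably the more streamlined one in this setting where compactly supported SGN control is available.
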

\begin{proof}
Choose a compact set $K\subset\subset U$ for which one has $\supp g_i\subseteq K$ for all $i$. Choose also, according to Lemma~\ref{lemmaaprox}, a sequence $(\varphi_i)\subseteq\calD(U)$ and a compact set $K'\subset\subset U$ satisfying the following conditions for all $i$:
$$
\supp\varphi_i\subseteq K',\quad \|g_i-\varphi_i\|_{W^{\nu-1,1}}\leq 2^{-i}\quad\text{and}\quad \|A\varphi_i\|_1\leq \|D_\bsA g_i\|+1.
$$
We hence have $\sup_i \| A\varphi_i\|_1<+\infty$ while it is clear that $(\varphi_i)$ is compactly supported and satisfies $\|g_i-\varphi_i\|_{W^{\nu-1,1}}\to 0$, $i\to\infty$.

Now fix $0<\beta<1$, a multi-index $\alpha\in\N^N$ satisfying $|\alpha|=\nu-1$ and observe that the sequence $(\psi_i)_{i}$ also satisfies, according to (\ref{jh}):
$$
\sup_i \|\partial^\alpha\varphi_i\|_{1-\beta,1}=\sup_i \|\Lambda_{1-\beta} \partial^\alpha\varphi_i\|_1\leq C\sup_i \|A \varphi_i\|_1<+\infty.
$$
It hence follows from the compactness of the inclusion of $W^{1-\beta,1}_c(U)\subset \subset\LL^1(U)$ (see \cite[Theorem 6.2]{MP}) that there exists $h^\alpha\in \LL^1_c(U)$ and a subsequence $(\varphi_{i_k^\alpha})\subseteq (\varphi_i)$ such that $\partial^\alpha \varphi_{i_k^\alpha}$ converges to $h^\alpha$ in $\LL^1(U)$. This yields a subsequence $(\varphi_{i_k})\subseteq (\varphi_i)$ such that, for all $\alpha\in\N^N$ with $|\alpha|=\nu-1$, one has $\partial^\alpha \varphi_{i_k}\to h^\alpha$, $k\to\infty$. Using the Rellich-Kondrachov theorem (see \textit{e.g.} Ziemer \cite[Theorem~2.5.1]{ZIEMER}), we can moreover assume that $\varphi_{i_k}\to g$ in $\LL^1_c(U)$. According to the closing lemma (see Willem \cite[Lemma~6.1.5]{WILLEM}), we then have $h^\alpha=\partial^\alpha g$ for all $\alpha\in\N^N$ with $|\alpha|=\nu-1$. This ensures  $g\in W^{\nu-1,1}_c(U)$ and the convergence of $(g_{i_k})$ to $g$ in $W^{\nu-1,1}(U)$. Moreover, the semicontinuity property of the $\bsA$-variation yields $g\in BV_{\bsA,c}(U)$, which terminates the proof.
\end{proof}
\begin{Remark}
According to Theorem~\ref{thm.hp} and Proposition~\ref{prop.gradmod}, we see that if one assumes $A$ to be elliptic and canceling, each point $x_0\in\Omega$ is contained a neighborhood $U\subseteq\Omega$ satisfying the hypotheses of the previous proposition. \end{Remark}

\subsection{A Sobolev-Gagliardo-Nirenberg inequality in $BV_\bsA$} As announced we get the following result:

\begin{proposition}\label{prop.SGNBV}
Let $A(x,D)$ be as before. Then every point $x_{0} \in \Omega$ is contained in an open neighborhood $U \subset \Omega$ such that the inequality:
\begin{equation}\label{dl}
\|g\|_{{W^{\nu-1,N/N-1}}}\leq C\|D_{\bsA}g\|,
\end{equation}
holds for all $g\in BV_{\bsA,c}(U)$, where $C=C(U)>0$ is a constant depending only on $U$.
\end{proposition}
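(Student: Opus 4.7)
The plan is to obtain \eqref{dl} by combining the \textit{a priori} inequality \eqref{hp3ineq} of Theorem~\ref{thm.hp}, applied to smooth test functions, with the approximation result from Lemma~\ref{lemmaaprox}, and then passing to the limit.

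\medskip

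\textbf{Step 1.} First I would choose, via Theorem~\ref{thm.hp} and Proposition~\ref{prop.gradmod}, a ball $U=B(x_0,\ell)\subseteq\Omega$ on which both the SGN-type inequality
\[
\|\varphi\|_{W^{\nu-1,N/(N-1)}}\leq C\,\|A(x,D)\varphi\|_{L^1}
\]
(valid for every $\varphi\in\calD(U,E)$) and the hypotheses of Proposition~\ref{prop.compacite} are satisfied.

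\medskip

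\textbf{Step 2.} Given $g\in BV_{\bsA,c}(U)$, I would invoke Lemma~\ref{lemmaaprox} to produce a sequence $(\varphi_i)\subseteq\calD(U,E)$ with $\varphi_i\twoheadrightarrow g$ and
\[
\|A(x,D)\varphi_i\|_{L^1}\;\longrightarrow\;\|D_\bsA g\|.
\]
Since each $\varphi_i$ is a smooth compactly supported test function on $U$, applying the inequality from Step~1 to $\varphi_i$ gives, for every $|\alpha|=\nu-1$,
\[
\|\partial^\alpha\varphi_i\|_{L^{N/(N-1)}(U)}\;\leq\;\|\varphi_i\|_{W^{\nu-1,N/(N-1)}}\;\leq\;C\,\|A(x,D)\varphi_i\|_{L^1},
\]
so that $\sup_i\|\partial^\alpha\varphi_i\|_{L^{N/(N-1)}}<+\infty$ for every such $\alpha$.

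\medskip

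\textbf{Step 3.} Since $N\geq 2$ the exponent $q:=N/(N-1)$ satisfies $1<q\leq 2$, hence $L^q(U,E)$ is reflexive. For each $|\alpha|=\nu-1$ I can therefore extract, by a diagonal procedure, a common subsequence (still denoted $(\varphi_i)$) and elements $h^\alpha\in L^q(U,E)$ such that $\partial^\alpha\varphi_i\rightharpoonup h^\alpha$ weakly in $L^q(U,E)$. Because $\varphi_i\to g$ in $L^1(U,E)$ (a consequence of $W^{\nu-1,1}$ convergence), one has $\partial^\alpha\varphi_i\to\partial^\alpha g$ in $\calD'(U,E)$; identifying the two limits forces $h^\alpha=\partial^\alpha g$, so in particular $\partial^\alpha g\in L^q(U,E)$ for every $|\alpha|=\nu-1$.

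\medskip

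\textbf{Step 4.} By the lower semicontinuity of the norm under weak $L^q$ convergence, and by the choice of the sequence in Step~2,
\[
\|\partial^\alpha g\|_{L^{N/(N-1)}}\;\leq\;\liminfe_{i}\|\partial^\alpha\varphi_i\|_{L^{N/(N-1)}}.
\]
Summing over $|\alpha|=\nu-1$ and combining with the SGN inequality applied to $\varphi_i$ yields
\[
\|g\|_{W^{\nu-1,N/(N-1)}}\;\leq\;\liminfe_{i}\|\varphi_i\|_{W^{\nu-1,N/(N-1)}}\;\leq\;C\liminfe_{i}\|A(x,D)\varphi_i\|_{L^1}\;=\;C\,\|D_\bsA g\|,
\]
which is the desired estimate \eqref{dl}.

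\medskip

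There is no real obstacle here beyond the bookkeeping: the genuinely hard analytic content (the SGN-type estimate for test functions, and the approximation lemma recovering the $\bsA$-variation as a limit of $L^1$-norms of $A\varphi_i$) has already been established in Theorem~\ref{thm.hp} and Lemma~\ref{lemmaaprox}. The only subtlety is the identification $h^\alpha=\partial^\alpha g$, which relies on matching the weak $L^q$ limit with the distributional limit coming from $L^1$ convergence; reflexivity of $L^{N/(N-1)}$ for $N\geq 2$ is what makes this transparent.
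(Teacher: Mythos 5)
Your proof is correct and follows essentially the same route as the paper: select $U$ via Theorem~\ref{thm.hp}, approximate $g$ by $\varphi_i\twoheadrightarrow g$ using Lemma~\ref{lemmaaprox}, and pass the SGN inequality to the limit. The only cosmetic difference is in how lower semicontinuity of the $L^{N/(N-1)}$-seminorm is obtained: the paper extracts an a.e.\ convergent subsequence (available since $\partial^\alpha\varphi_i\to\partial^\alpha g$ in $L^1$) and applies Fatou's lemma, whereas you use reflexivity of $L^{N/(N-1)}$, weak compactness, identification of the weak limit with the distributional derivative, and weak lower semicontinuity of the norm~---~both mechanisms yield the identical estimate.
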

\begin{proof}
Fix $x_{0} \in \Omega$. It follows from Theorem \ref{thm.hp} that there exists a neighborhood $U \subset \Omega$ of $x_{0}$ and $C=C(U)>0$ such that, for all $\varphi\in\calD(U;E)$, one has:
\begin{equation}\nonumber
\|\varphi\|_{W^{\nu-1,{N/N-1}}}\leq C \| A(\cdot,D) \varphi\|_{{1}}.
\end{equation}
Then given $g \in BV_{\bsA,c}(U)$ consider a sequence $\left\{ \varphi_{i} \right\} \subset \calD(U;E)$ satisfying (i)-(iii) by Lemma \ref{lemmaaprox}. As a consequence of Fatou's Lemma and the previous estimate we conclude (extracting if necessary a subsequence) that:
\begin{equation}\nonumber
\|g\|_{W^{\nu-1,{N/N-1}}}\leq \liminfe_{i \rightarrow \infty}\|\varphi_{i}\|_{W^{\nu-1,{N/N-1}}}\leq C  \lim_{i \rightarrow \infty}\| A(\cdot,D)\varphi_{i}\|_{{1}} 
=C \|D_{\mathcal{A}}g\|.
\end{equation}
 The proof is complete.
\end{proof}

\section{$\bsA$-charges and their extensions to $BV_{\bsA,c}$}\label{sec2}


We now get back to the original problem of finding, locally, a continuous solution to (\ref{diverl}). 
\subsection{$\bsA$-fluxes and $\bsA$-charges} Distributions which allow, in an open set $\Omega$, to solve continuously (\ref{diverl}), will be called \textit{$\bsA$-fluxes}.

\begin{definition}
A distribution $\mathcal{F} \in \calD'{(\Omega)}$ is called an \textit{$\bsA$-flux in $\Omega$} if the equation \eqref{diverl} 
has a continuous solution in $\Omega$, \textit{i.e.} if there exists $v \in C(\Omega,F^*)$ such that one has, for all $\varphi\in\calD(\Omega;E)$:
\begin{equation}\label{eq.flux}
\mathcal{F}(\varphi)=\int_{\Omega}\bar{v}\cdot {A(\cdot,D)\varphi}, \quad \forall\; \varphi \in \calD(\Omega;E).
\end{equation}
\end{definition}

$\bsA$-fluxes satisfy the following continuity condition.
\begin{lemma}
If $\mathcal{F}$ is an $\bsA$-flux then $\lim_{i} \mathcal{F}(\varphi_{i})=0$ for every sequence $( \varphi_{i})_{i}\subseteq \calD(\Omega;E)$ verifying $\varphi_i\twoheadrightarrow 0$.
\end{lemma}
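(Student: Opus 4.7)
The plan is to represent $\mathcal{F}(\varphi_i)$ via the integral formula \eqref{eq.flux}, approximate the continuous vector field $v$ uniformly, on a compact neighborhood of the common support of the $\varphi_i$, by \emph{smooth, compactly supported} vector fields, and then shift the differential operator $A(\cdot,D)$ onto those smooth approximants via the duality identity defining $A^*$. The three conditions packaged in $\varphi_i \twoheadrightarrow 0$ play distinct roles: the common compact support lets me localise on a compact set, the uniform bound $\sup_i \|A(\cdot,D)\varphi_i\|_{L^1} < +\infty$ will absorb the error coming from the smooth approximation of $v$, and the convergence $\|\varphi_i\|_{W^{\nu-1,1}} \to 0$ (together with an iterated Poincar\'e inequality for compactly supported functions) will give $\|\varphi_i\|_{L^1} \to 0$, which is what is needed once the derivatives have been moved onto the approximants.

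More concretely, first I would fix a compact $K \subset\subset \Omega$ with $\supp\varphi_i\subseteq K$ for all $i$ and let $M:=\sup_i\|A(\cdot,D)\varphi_i\|_{L^1}$, then pick a cut-off $\chi\in\calD(\Omega)$ equal to $1$ on a neighborhood of $K$. By standard mollification (after a continuous extension if needed) I produce a family $(v_\epsilon)\subseteq C^\infty(\R^N,F^*)$ with $\|v-v_\epsilon\|_{L^\infty(\supp\chi)} \to 0$ as $\epsilon \to 0$. Splitting
\[
\mathcal{F}(\varphi_i) \;=\; \int_\Omega (\bar v - \bar v_\epsilon)\cdot A(\cdot,D)\varphi_i \;+\; \int_\Omega \overline{\chi v_\epsilon}\cdot A(\cdot,D)\varphi_i,
\]
the first integral is bounded by $\|v-v_\epsilon\|_{L^\infty(\supp\chi)}\cdot M$, uniformly in $i$. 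For the second, since $\chi v_\epsilon\in\calD(\Omega,F^*)$, the defining identity for $A^*$ yields
\[
\int_\Omega \overline{\chi v_\epsilon}\cdot A(\cdot,D)\varphi_i \;=\; \int_\Omega \varphi_i\cdot \overline{A^*(\cdot,D)(\chi v_\epsilon)},
\]
hence is bounded by $C_\epsilon\|\varphi_i\|_{L^1}$ with $C_\epsilon:=\|A^*(\cdot,D)(\chi v_\epsilon)\|_{L^\infty}<+\infty$. Since $\varphi_i$ is supported in $K$, an iterated Poincar\'e inequality yields $\|\varphi_i\|_{L^1} \leq C(K)\|\varphi_i\|_{W^{\nu-1,1}} \to 0$. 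An $\epsilon$/$i$ argument (pick $\epsilon$ making the first term below $\delta/2$, then $i$ large enough for the second) closes the proof.

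The only (minor) obstacle is to make the integration by parts legal: the smooth approximant $v_\epsilon$ has no reason to be compactly supported in $\Omega$, so it must be cut off by $\chi$ to become a legitimate test vector field, and $\chi$ must be identically $1$ on a neighborhood of $K$ in order not to introduce a spurious contribution on $\supp A(\cdot,D)\varphi_i\subseteq K$. Once this reduction is carried out, the rest is routine, and no ellipticity or canceling hypothesis on $A(\cdot,D)$ is needed~---~which is natural, since the lemma is a \emph{necessary}-condition statement for continuous solvability.
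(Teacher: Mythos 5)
Your proof is correct and follows essentially the same approach as the paper's: the paper invokes Weierstrass to pick $w\in\calD(\Omega,F^*)$ with $\sup_K|v-w|\leq\epsilon$, splits $\mathcal{F}(\varphi_i)$ into the error term (absorbed by $\sup_i\|A\varphi_i\|_1$) and the term where $A$ is shifted onto $w$ via the adjoint identity (controlled by $\|\varphi_i\|_1$), and sends $\epsilon\to 0$ last. Your mollification-plus-cutoff construction of the approximant and your explicit mention of the iterated Poincar\'e step to pass from $\|\varphi_i\|_{W^{\nu-1,1}}\to 0$ to $\|\varphi_i\|_{L^1}\to 0$ are minor packaging differences, not a different route.
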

\begin{proof}
Let $\mathcal{F}$ be an $\bsA$-flux and let $v\in C(\Omega,F^*)$ be such that (\ref{eq.flux}) holds.
Fix a sequence $(\varphi_i)_{i}\subseteq\calD(\Omega;E)$ verifying $\varphi_i\twoheadrightarrow 0$, let $c:=\sup_i\|A\varphi_i\|_1<+\infty$ and choose a compact set $K\subset\subset\Omega$ for which one has $\supp\varphi_i\subseteq K$ for all $i$.

Fix now $\epsilon>0$. According to Weierstrass' approximation theorem, choose a vector field $w\in \calD(\Omega,F^*)$ for which one has $\sup_K |v-w|\leq\epsilon$ and compute, for all $i$:
$$
\left| \mathcal{F}(\varphi_i)\right|\leq \left|\int_\Omega (\bar{v}-\bar{w})\cdot {A(\cdot,D) \varphi_i}\right|+
\left|\int_\Omega \bar{w}\cdot {A(\cdot,D) \varphi_i}\right|\leq\epsilon \|A\varphi_i\|_1+\left| \int_\Omega {\varphi_i} \,\overline{A^*w}\right|\leq c\epsilon+ \|A^* w\|_\infty \|\varphi_i\|_1.
$$
We hence get $\limsupe_i |\mathcal{F}(\varphi_i)|\leq c\epsilon$,
and the result follows for $\epsilon>0$ is arbitrary.
\end{proof}

The above property suggests the following definition of linear functionals {enjoying some continuity property involving the operator $A$}.
\begin{definition}
A linear functional $\mathcal{F}: \calD(\Omega;E) \rightarrow \mathbb{C}$ is called an $\bsA$-charge in $\Omega$  if $\lim_{i} \mathcal{F}(\varphi_{i})=0$ for every sequence $( \varphi_{i})_{i}\subseteq \calD(\Omega;E)$ satisfying $\varphi_i\twoheadrightarrow 0$. The linear space of all $\bsA$-charges in $\Omega$ is denoted by $CH_{\bsA}(\Omega)$.
\end{definition}

The following characterization of $\bsA$-charges will be useful in the sequel.
\begin{proposition}
If $\mathcal{F}:\calD(\Omega;E) \rightarrow \mathbb{C}$ is a linear functional, then the following properties are equivalent
\begin{enumerate}
\item[(i)] $\fF$ is an $\bsA$-charge,
\item[(ii)] for every $\epsilon>0$ and each compact set $K \subset \subset \Omega$ there exists $\theta>0$ such that, for any $\varphi\in\calD_K(\Omega;E)$, one has:
\begin{equation}\label{eq.ch}
|\fF(\varphi)|\leq \theta\|\varphi\|_{W^{\nu-1,1}}+\epsilon\|A(\cdot,D)\varphi\|_1.
\end{equation}
\end{enumerate}
\end{proposition}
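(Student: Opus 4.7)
The plan is to establish the two implications by straightforward arguments, with the nontrivial direction being a normalization/contradiction argument.

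For (ii)$\Rightarrow$(i), I would simply unravel definitions. Let $(\varphi_i)\subseteq\calD(\Omega;E)$ satisfy $\varphi_i\twoheadrightarrow 0$, fix a common compact support $K\subset\subset\Omega$, and let $c:=\sup_i\|A(\cdot,D)\varphi_i\|_1<+\infty$. Given $\eta>0$, apply (ii) with $\epsilon:=\eta/(c+1)$ to obtain $\theta>0$ such that $|\mathcal{F}(\varphi_i)|\leq \theta\|\varphi_i\|_{W^{\nu-1,1}}+\epsilon\|A(\cdot,D)\varphi_i\|_1$ for all $i$. Since $\|\varphi_i\|_{W^{\nu-1,1}}\to 0$, one gets $\limsup_i|\mathcal{F}(\varphi_i)|\leq \epsilon c\leq \eta$, hence $\lim_i\mathcal{F}(\varphi_i)=0$, proving that $\mathcal{F}$ is an $\bsA$-charge.

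For (i)$\Rightarrow$(ii), the plan is to argue by contradiction. Suppose (ii) fails; then there exist $\epsilon_0>0$ and $K\subset\subset\Omega$ such that for every $n\in\N^*$ one can find $\varphi_n\in\calD_K(\Omega;E)$ with
\begin{equation*}
|\mathcal{F}(\varphi_n)|> n\,\|\varphi_n\|_{W^{\nu-1,1}}+\epsilon_0\,\|A(\cdot,D)\varphi_n\|_1.
\end{equation*}
In particular $\mathcal{F}(\varphi_n)\neq 0$, so I can normalize by setting $\psi_n:=\varphi_n/|\mathcal{F}(\varphi_n)|\in\calD_K(\Omega;E)$. Then $|\mathcal{F}(\psi_n)|=1$ for every $n$, while dividing the above inequality by $|\mathcal{F}(\varphi_n)|$ gives $\|\psi_n\|_{W^{\nu-1,1}}<1/n$ and $\|A(\cdot,D)\psi_n\|_1<1/\epsilon_0$. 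Thus $(\psi_n)$ is compactly supported (in $K$), converges to $0$ in $W^{\nu-1,1}$, and satisfies $\sup_n\|A(\cdot,D)\psi_n\|_1<+\infty$; in other words $\psi_n\twoheadrightarrow 0$. Since $\mathcal{F}$ is an $\bsA$-charge we must have $\mathcal{F}(\psi_n)\to 0$, contradicting $|\mathcal{F}(\psi_n)|=1$.

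There is no real obstacle here beyond the standard homogeneity/normalization trick; the statement is essentially a quantitative reformulation of sequential continuity with respect to the convergence $\twoheadrightarrow$, and the only thing one needs to check is that the seminorms $\|\cdot\|_{W^{\nu-1,1}}$ and $\|A(\cdot,D)\cdot\|_1$ are precisely the ones controlling this convergence, which is built into Definition~\ref{def.dc}.
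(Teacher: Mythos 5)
Your proof is correct. The direction (ii)~$\Rightarrow$~(i) matches what the paper dismisses as trivial. For (i)~$\Rightarrow$~(ii), however, you take a somewhat different route than the paper: you argue by contradiction and normalize the offending test functions by the scalar $|\fF(\varphi_n)|$, so that the sequence $\psi_n=\varphi_n/|\fF(\varphi_n)|$ is directly seen to satisfy $\psi_n\twoheadrightarrow 0$ while $|\fF(\psi_n)|=1$. The paper instead proves the implication directly: it first extracts from the charge hypothesis an $\eta>0$ such that $\|\varphi\|_{W^{\nu-1,1}}\leq\eta$ and $\|A\varphi\|_1\leq 1$ force $|\fF(\varphi)|\leq\epsilon$ (a step which itself conceals a small sequential/contradiction argument of exactly your type), sets $\theta:=\epsilon/\eta$, and then establishes \eqref{eq.ch} by homogeneity (normalizing $\|A\varphi\|_1=1$) together with a two-case analysis according to whether $\|\varphi\|_{W^{\nu-1,1}}\leq\eta$ or not. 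Your version bundles the two quantities that govern $\twoheadrightarrow$-convergence into a single normalization and avoids the case split, which makes it a bit shorter and more self-contained; the paper's version has the merit of displaying an explicit $\theta=\epsilon/\eta$ and running entirely without contradiction once $\eta$ is in hand. Both arguments rest on the same observation that $\|\cdot\|_{W^{\nu-1,1}}$ and $\|A(\cdot,D)\,\cdot\|_1$ (together with uniform compact supports) are exactly the quantities defining $\twoheadrightarrow$ in Definition~\ref{def.dc}.
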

\begin{proof}
We proceed as in \cite[Proposition~2.6]{PP}.\

Since (ii) implies trivially (i), it suffices to show that the converse implication holds. To that purpose, assume (i) holds, \textit{i.e.} suppose that $\fF$ is an $\bsA$-charge. Fix $\epsilon>0$ and a compact set $K \subset \subset \Omega$. By hypothesis, there exists $\eta>0$ such that for every $\varphi \in \calD_K(\Omega;E)$ satisfying $\| \varphi \|_{W^{\nu-1,1}}\leq \eta$ and $\|A\varphi\|_{{1}}\leq 1$, we have $|\fF(\varphi)|\leq \epsilon$. We now define $\theta :=\epsilon / \eta$.

Fix $\varphi\in\calD_K(\Omega;E)$ and assume by homogeneity that one has $\|A(\cdot,D)\varphi\|_1=1$. If moreover one has $\|\varphi\|_{W^{\nu-1,1}}\leq \eta$, then one computes $|\fF(\varphi)|\leq \epsilon=\epsilon \|A(\cdot,D)\varphi\|_1$. If on the contrary we have $\|g\|_{W^{\nu-1,1}}> \eta$, we define $\tilde{\varphi}=\varphi \eta/\|\varphi\|_{W^{\nu-1,1}}$. We then have $\|\tilde{\varphi}\|_{W^{\nu-1,1}}= \eta$ as well as $\|A(\cdot,D)\tilde{\varphi}\|_{{1}}< 1$, and hence also $|\fF(\tilde{\varphi})|\leq \epsilon$; this yields finally $|\fF(\varphi)|=\|\varphi\|_{W^{\nu-1,1}}|f(\tilde{\varphi})|/\eta\leq \epsilon \|\varphi\|_{W^{\nu-1,1}}/\eta=\theta \|\varphi\|_{W^{\nu-1,1}}$.
\end{proof}

As we shall see now, $\bsA$-charges can be extended in a unique way to linear forms on $BV_{\bsA,c}$.
\begin{Proposition}
An $\bsA$-charge $\fF$ in $\Omega$ extends in a unique way to a linear functional $\widetilde{\fF}:BV_{\bsA,c}(\Omega)\to\bbC$ satisfying the following property: for any $\epsilon>0$ and each compact set $K\subset\subset\Omega$, there exists $\theta>0$ such that for any $g\in BV_{\bsA,K}(\Omega)$ one has:
\begin{equation}\label{eq.ch2}
|\widetilde{\fF}(g)|\leq\theta\|g\|_{W^{\nu-1,1}}+\epsilon\|D_\bsA g\|.
\end{equation}
\end{Proposition}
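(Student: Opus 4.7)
The plan is to define $\widetilde{\fF}$ by approximation. Given $g\in BV_{\bsA,c}(\Omega)$, Lemma~\ref{lemmaaprox} yields a sequence $(\varphi_i)\subseteq\calD(\Omega;E)$ with $\varphi_i\twoheadrightarrow g$ and $\|A(\cdot,D)\varphi_i\|_1\to\|D_\bsA g\|$, and I would set $\widetilde{\fF}(g):=\lim_i\fF(\varphi_i)$. The heart of the matter is to verify that this limit exists and is independent of the approximating sequence; everything else (linearity, the extension property, the inequality, and uniqueness) follows from the $\bsA$-charge property together with a careful control on supports.

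Existence and independence both reduce to one observation: if $(\varphi_i),(\psi_i)\subseteq\calD(\Omega;E)$ both satisfy the conclusion of Lemma~\ref{lemmaaprox} relative to $g$, then $\varphi_i-\psi_i\to 0$ in $W^{\nu-1,1}$, remains supported in a common compact subset of $\Omega$, and satisfies $\sup_i\|A(\cdot,D)(\varphi_i-\psi_i)\|_1<+\infty$; hence $\varphi_i-\psi_i\twoheadrightarrow 0$ and the defining property of an $\bsA$-charge yields $\fF(\varphi_i-\psi_i)\to 0$. Applied to a single sequence (via its subsequences), this shows $(\fF(\varphi_i))$ is Cauchy; applied to two different sequences, it shows the limit is intrinsic. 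Taking the constant sequence $\varphi_i\equiv\varphi$ when $g=\varphi\in\calD(\Omega;E)$ gives $\widetilde{\fF}(\varphi)=\fF(\varphi)$, and linearity of $\widetilde{\fF}$ is inherited from $\fF$ via linear combinations of approximating sequences.

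To obtain the estimate \eqref{eq.ch2}, I would fix $K\subset\subset\Omega$ and pick once and for all a compact set $K'\subset\subset\Omega$ containing an open neighborhood of $K$ (for instance $K':=K+\overline{B(0,\delta_0)}$ for a sufficiently small $\delta_0>0$). The mollification-based construction behind Lemma~\ref{lemmaaprox} guarantees that, for every $g\in BV_{\bsA,c}(\Omega)$ with $\supp g\subseteq K$, the approximating sequence $\varphi_i$ can be taken supported in $K'$. Applying the preceding proposition at $K'$ with tolerance $\epsilon>0$ furnishes $\theta=\theta(K',\epsilon)>0$ such that $|\fF(\varphi_i)|\leq\theta\|\varphi_i\|_{W^{\nu-1,1}}+\epsilon\|A(\cdot,D)\varphi_i\|_1$ for every $i$; passing to the limit using $\varphi_i\to g$ in $W^{\nu-1,1}$ and $\|A(\cdot,D)\varphi_i\|_1\to\|D_\bsA g\|$ yields \eqref{eq.ch2}, with a constant depending only on $K$ and $\epsilon$.

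For uniqueness, suppose $\widetilde{\fF}_1,\widetilde{\fF}_2$ both extend $\fF$ and satisfy \eqref{eq.ch2}. Their difference $G:=\widetilde{\fF}_1-\widetilde{\fF}_2$ vanishes on $\calD(\Omega;E)$ and satisfies an estimate of the same form. Given $g\in BV_{\bsA,c}(\Omega)$ with $\supp g\subseteq K$ and $(\varphi_i)$ an approximation as above, I would use $G(\varphi_i)=0$ to write $G(g)=G(g-\varphi_i)$, bound $\|D_\bsA(g-\varphi_i)\|\leq\|D_\bsA g\|+\|A(\cdot,D)\varphi_i\|_1$, and apply the estimate at $K'$: letting $i\to\infty$ gives $|G(g)|\leq 4\epsilon\|D_\bsA g\|$, whence $G(g)=0$ since $\epsilon>0$ is arbitrary. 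I expect the only genuinely delicate point to be ensuring that the supports of the approximating sequences stay in a compact set depending only on $K$ (not on $g$), which is precisely what the explicit mollification construction underlying Lemma~\ref{lemmaaprox} provides.
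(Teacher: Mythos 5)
Your proposal is correct and follows essentially the same route as the paper: define $\widetilde{\fF}(g)$ as the limit of $\fF(\varphi_i)$ along an approximating sequence $\varphi_i\twoheadrightarrow g$, show this is well defined via the $\bsA$-charge property (equivalently, estimate \eqref{eq.ch}), and then deduce \eqref{eq.ch2} by passing to the limit along a sequence from Lemma~\ref{lemmaaprox}. The one detail you rightly flag and settle---that the mollified approximants can be taken with support in a compact $K'$ depending only on $K$, not on $g$---is exactly what makes the argument uniform in $g$, and your uniqueness argument (split $G(g)=G(g-\varphi_i)$ and let $i\to\infty$, then $\epsilon\to 0$) is sound, up to the inessential constant in ``$4\epsilon$''.
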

\begin{proof}
Given $g\in BV_{\bsA,c}(\Omega)$, fix $(\varphi_i)_{i}\subseteq\calD(\Omega;E)$ satisfying $\varphi_i\twoheadrightarrow g$ and observe that it follows from (\ref{eq.ch}) that $(\fF(\varphi_i))_i$ is a Cauchy sequence of complex numbers whose limit does not depend on the choice of sequence $(\varphi_i)\subseteq\calD(\Omega;E)$ satisfying $\varphi_i\twoheadrightarrow g$. We hence define $\widetilde{\fF}(g):=\lim_i \fF(\varphi_i)$. It now follows readily from (\ref{eq.ch}) and {Remark~\ref{rmk.sci}} that $\widetilde{\fF}$ satisfies the desired property.
\end{proof}
\begin{Remark}\label{rmk.conv-ch}
If $\widetilde{\fF}:BV_{\bsA,c}(\Omega)\to\bbC$ extends the $\bsA$-charge $\fF$, it is easy to see from the previous proposition that for any compactly supported sequence $(g_i)_{i}\subseteq BV_{\bsA,c}(\Omega)$ satisfying $g_i\to 0$, $i\to\infty$ in $W^{\nu-1,1}(\Omega;E)$ and $\sup_i \|D_\bsA g_i\|<+\infty$, one has $\fF(g_i)\to 0$, $i\to\infty$.
\end{Remark}
From now on, we shall identify any $\bsA$-charge with its extension to $BV_{\bsA,c}$ and use the same notation for the two linear forms.

\subsection{Examples of $\bsA$-charges} Let us define two important classes of $\bsA$-charges.
\begin{Example}\label{ex.flux}
In case $\fF$ is the $\bsA$-flux associated to $v\in C(\Omega,F^*)$ according to (\ref{eq.flux}), its unique extension to $BV_{\bsA,c}(\Omega)$ is the $\bsA$-charge:
$$
\Gamma(v):BV_{\bsA, c}(\Omega)\to\bbC, g\mapsto \int_{\Omega} \bar{v}\cdot d\left[{D_\bsA g}\right].
$$

To see this, fix $g\in BV_{\bsA,c}(\Omega)$ together with a sequence $(\varphi_i)_{i}\subseteq\calD(\Omega;E)$ satisfying $\varphi_i\twoheadrightarrow g$ and choose a compact set $K$ satisfying $\supp g\subseteq K\subset\subset \Omega$ as well as $\supp \varphi_i\subseteq K$ for all $i$.
Given $\epsilon>0$, choose $w\in \calD(\Omega,F^*)$ a smooth vector field satisfying $\sup_K |v-w|\leq\epsilon$ and compute:
$$
\left|\Gamma(v)(g)-\int_\Omega \bar{v}\cdot d\left[{D_\bsA g}\right]\right|=\lim_i \left| \int_\Omega \bar{v}\cdot {A(\cdot,D) \varphi_i}-\int_\Omega \bar{v}\cdot d\left[{D_\bsA g}\right]\right|.
$$
On the other hand we have for all $i$:
\begin{multline*}
\left| \int_\Omega \bar{v}\cdot {A(\cdot,D) \varphi_i}-\int_\Omega \bar{v}\cdot d\left[ {D_\bsA g}\right]\right|
\leq \left| \int_\Omega (\bar{v}-\bar{w})\cdot {A(\cdot,D) \varphi_i}\right|+ \left| \int_\Omega (\bar{v}-\bar{w})\cdot d\left[ {D_\bsA g}\right]\right|\\
+\left| \int_\Omega \bar{w}\cdot {A(\cdot,D) \varphi_i}-\int_\Omega \bar{w}\cdot d\left[ {D_\bsA g}\right]\right|
\leq \epsilon \|A\varphi_i\|_1+\epsilon \|D_\bsA g\| +\left| \int_\Omega {\varphi_i}\, \overline{A^* w}-\int_\Omega \bar{w}\cdot d\left[ {D_\bsL g}\right]\right|.
\end{multline*}
Using the properties of $(\varphi_i)_{i}$ and Lebesgue's dominated convergence theorem, we thus get:
$$
\lim_i \left| \int_\Omega \bar{v}\cdot {A(\cdot,D) \varphi_i}-\int_\Omega \bar{v}\cdot d\left[ {D_\bsA g}\right]\right|\leq 2\epsilon \|D_\bsA g\|+\left| \int_\Omega {g}\,\overline{ A^*(\cdot,D)w}-\int_\Omega \bar{w}\cdot d\left[ {D_\bsL g}\right]\right|=2\epsilon \|D_\bsA g\|,
$$
according to (\ref{eq.def-mes}). The result follows, for $\epsilon>0$ is arbitrary.
\end{Example}
\begin{Example}\label{ex.sigma}
Assume that $U$ supports a SGN inequality of type (\ref{dl}) for $BV_{\bsA}$ functions in $U$. Then given $f\in W^{-(\nu-1),N}_{\mathrm{loc}}(U;{E^*})$, $f$ extends uniquely to an $\bsA$-charge in $U$.

To see this, fix $K\subset\subset U$ and infer from Lemma~\ref{lemma.hb} that there exist $(g_{\alpha})_{|\alpha|=\nu-1} \subseteq L^{N}(U;{E^*})$ such that 
$$
\la f,\varphi\ra=\sum_{|\alpha|=\nu-1} \int_U {g_{\alpha}}\overline{\partial^{\alpha}\varphi}, \quad \forall \varphi \in  \calD_K(U;{E}).
$$
Now fix $\epsilon>0$ and choose $\theta>0$ large enough for $ \left( \int_{K\cap \{|g_{\alpha}|>\theta\}} |g_{\alpha}|^N \right)^{1/N} \leq \epsilon/C$ to hold for any $|\alpha|=\nu-1$, where $C$ is a positive constant satisfying \eqref{dl}. We then compute, for $\varphi\in\calD_K(U;{E})$:
\begin{eqnarray*}
|\la f,\varphi\ra|&\leq &{\theta \left( \sum_{|\alpha|=\nu-1}\int_{K\cap\{|g_{\alpha}|\leq \theta\}}|\partial^{\alpha}\varphi| \right)+ \sum_{|\alpha|=\nu-1} \int_{K\cap\{|g_{\alpha}|> \theta\}} |g_{\alpha}\partial^{\alpha}\varphi|},\\
&\leq &{\theta \|\varphi\|_{W^{\nu-1,1}}+\sum_{|\alpha|=\nu-1} \left(\int_{K\cap \{|g_{\alpha}|>\theta\}} |g_{\alpha}|^N\right)^{ 1/N}\|\partial^{\alpha}\varphi\|_{{N/N-1}}},\\
&\leq &{\theta \|\varphi\|_{W^{\nu-1,1}}+\epsilon\|A(\cdot, D)\varphi\|_{L^{1}}}.
\end{eqnarray*}
The conclusion that $f$ extends to an $\bsA$-charge follows by approximation.
\end{Example}
\begin{Example}\label{ex.lambda}
Assume that $U$ supports a SGN inequality of type (\ref{dl}) for $BV_{\bsA}$ functions in $U$. Given $f\in L^N_{\mathrm{loc}}(U;{E^*})$ we know from Example~\ref{ex.sob-neg} that it defines a distribution in $W^{-(\nu-1),N}_{\mathrm{loc}}(U;{E^*})$. We then define, for $\varphi\in\calD(U;{E})$:
$$
\la \Lambda(f),\varphi\ra:=\overline{\la f,\bar{\varphi}\ra}=\int_U \bar{f}\varphi.
$$
It follows from the previous example and from approximation that $\Lambda(f)$ extends uniquely to an $\bsA$-charge in $U$ verifying, for all $g\in BV_{A,c}(U)$:
$$
\la\Lambda(f),g\ra=\int_U \bar{f}g.
$$
\end{Example}
\begin{Remark}\label{rmk.ODE}
It is easy to see that for any $x_0\in\Omega$, there exists an open neighborhood  $U\subseteq\Omega$ of $x_{0}$ such that one has $\Lambda[\calD(U;E)]\subseteq\Gamma[C^\infty(U,F^*)]$.
{Given $\varphi\in\calD(U;E)$, thanks to the local solvability of the elliptic operator $\Delta_{\bsA}=A^{\ast}(\cdot,D)\circ A(\cdot,D)$ (see \cite[Corollary 4.8]{GS}), there exists $u\in C^\infty(U;E)$ a smooth solution to $\Delta_\bsA u=\varphi$ in $U$.  Let $v:=A(\cdot,D) u$.}
This yields, for any $g\in BV_{\bsA,c}(U)$:
$$
\Lambda(\varphi)(g)=\int_U \bar{\varphi} {g}=\int_U g \,\overline{A^{*}(\cdot,D) v}=\int_U \bar{v}\cdot d\left[{D_\bsA g}\right]=\Gamma(v)(g),
$$
for we could, in the computation above, replace $v$ by $v\chi$ where $\chi\in\calD(U)$ satisfies $\chi=1$ in a neighborhood of $\supp g$.
\end{Remark}
It turns out that a linear functional on $BV_{\bsA,c}$ is an $\bsA$-charge if and only if it is continuous with respect to some locally convex topology on $BV_{\bsA,c}$.
\subsection{Another characterization of $\bsA$-charges}
{In the sequel, a \textit{locally convex space} means a Hausdorff locally convex topological vector space. For any family  $\mathcal{O}$ of sets and any set $X$ we denote $\mathcal{O}  \lefthalfcup  X :=\left\{ O \cap X: O \in \mathcal{O} \right\}$. Following \cite[Theorem~3.3]{PMP} we define the following topology on $BV_{\bsA,c}(\Omega)$, called the \textit{localized topology} associated to the family of subspaces $BV_{\bsA,K,\lambda}(\Omega)$.
\begin{definition}
Let $\bsT_{\bsA}$ be the unique locally convex topology on $BV_{\bsA,c}(\Omega)$ such that
\begin{enumerate}
\item[(a)] $\bsT_{\bsA} \lefthalfcup BV_{\bsA,K,\lambda} \subseteq \bsT_{W^{\nu-1,1}} \lefthalfcup BV_{\bsA,K,\lambda}$ for all $K \subset \subset \Omega$ and $\lambda>0$ where we let:
$$BV_{\bsA,K,\lambda}=\left\{ g \in BV_{\bsA,c}(\Omega):\;\text{supp}\;g\subseteq K, \|D_{\bsA}g\|\leq \lambda \right\},$$
and where $\bsT_{W^{\nu-1,1}}$ is the $W^{\nu-1,1}$-topology;
\item[(b)] for every locally convex space $Y$, a linear map $f:(BV_{\bsA,c};\bsT_{\bsA}) \rightarrow Y$ is continuous if only if $ f\upharpoonright{BV_{\bsA,K,\lambda}}$ is $W^{\nu-1,1}$ continuous for all $K \subset \subset \Omega$ and $\lambda>0$ .
\end{enumerate}
\end{definition}
\begin{Remark}
Uniqueness of the above topology is easily seen according to property (b). Concerning the existence, one can define the topology $\bsT_{\bsA}$ by constructing a basis of neighborhoods $\bsB_{\bsA}$ of the origin in the following way: denote by $\bsB_{\bsA}$ the collection of all absorbing, balanced and convex subsets $U\subseteq BV_{\bsA}(\Omega)$ satisfying $U\lefthalfcup BV_{\bsA,K,\lambda}\in \bsT_{W^{\nu-1,1}} \lefthalfcup BV_{\bsA,K,\lambda}$. Calling $\bsT_{\bsA}$ the vector topology on $BV_{\bsA}(\Omega)$ admitting $\bsB_{\bsA}$ as a neighborhood basis at the origin, one can see that it satisfies properties (a) and (b) above.

Choosing $(K_k)_{k\in\N}$ an increasing sequence of compact sets exhausting $\Omega$ and defining $X_k:=BV_{\bsA, K_k}(\Omega)$ for all $k\in\N$, we have:
$$
BV_{\bsA, K_k,k}=\{g\in X_k: \|D_{\bsA} g\|\leq k\}.
$$
Since it is straightforward to see that all the vector spaces $BV_{\bsA,K_k}$ are closed in the $W^{\nu-1,1}$ topology, and that $\|D_\bsA\cdot \|$ is a lower semicontinuous norm on $BV_{A,c}(\Omega)$, it now follows readily from \cite[Proposition~3.11]{PMP} that $\bsT_{\bsA}$-continuous linear functionals on $BV_{\bsA}(\Omega)$ are exactly the $\bsA$-charges in $\Omega$.
\end{Remark}
\begin{Proposition}
A linear functional $\fF:BV_{\bsA,c}(\Omega)\to\bbC$ is an $\bsA$-charge if and only if it is $\bsT_\bsA$-continuous.
\end{Proposition}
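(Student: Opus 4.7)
The plan is to prove the two implications by exploiting the defining properties (a) and (b) of the topology $\bsT_{\bsA}$, together with the strong continuity estimate \eqref{eq.ch2} for the forward direction and Remark~\ref{rmk.conv-ch} for the converse. An alternative would be simply to invoke \cite[Proposition~3.11]{PMP} as indicated in the preceding remark, after observing that the ingredients assumed there~---~an exhausting sequence of compact sets, closedness of each $BV_{\bsA,K_{k}}(\Omega)$ for the $W^{\nu-1,1}$-topology, and lower semicontinuity of $\|D_{\bsA}\cdot\|$ (see Remark~\ref{rmk.sci})~---~are all present; but the direct argument is short enough to include.

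\emph{Forward direction.} Assume $\fF$ is an $\bsA$-charge. By property (b), it suffices to show that $\fF\upharpoonright BV_{\bsA,K,\lambda}$ is $W^{\nu-1,1}$-continuous for any $K\subset\subset\Omega$ and $\lambda>0$. Fix such $K$ and $\lambda$, together with $\eta>0$. Applying \eqref{eq.ch2} with $\epsilon=\eta/(2\lambda)$, I obtain $\theta>0$ such that $|\fF(h)|\leq\theta\|h\|_{W^{\nu-1,1}}+(\eta/(2\lambda))\|D_{\bsA}h\|$ for every $h\in BV_{\bsA,K}(\Omega)$. Given $g_{1},g_{2}\in BV_{\bsA,K,\lambda}$, the difference $g_{1}-g_{2}$ is supported in $K$ and, by the triangle inequality, satisfies $\|D_{\bsA}(g_{1}-g_{2})\|\leq 2\lambda$, hence $|\fF(g_{1})-\fF(g_{2})|\leq\theta\|g_{1}-g_{2}\|_{W^{\nu-1,1}}+\eta$, establishing the desired continuity.

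\emph{Converse direction.} Assume $\fF$ is $\bsT_{\bsA}$-continuous. In view of the identification between $\bsA$-charges and their extensions to $BV_{\bsA,c}(\Omega)$, I aim to verify that $\fF\upharpoonright\calD(\Omega;E)$ is an $\bsA$-charge and that $\fF$ coincides with its unique extension. For $(\varphi_{i})\subseteq\calD(\Omega;E)$ with $\varphi_{i}\twoheadrightarrow 0$, note that the sequence has support in a common compact set $K$, and that the elementary estimate $\|D_{\bsA}\varphi_{i}\|\leq\|A(\cdot,D)\varphi_{i}\|_{1}$ (obtained from the definition of the $\bsA$-variation after one integration by parts on smooth $\varphi_{i}$) shows that $(\varphi_{i})\subseteq BV_{\bsA,K,\lambda}$ with $\lambda:=\sup_{i}\|A\varphi_{i}\|_{1}<\infty$. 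Property (b) then gives $\fF(\varphi_{i})\to 0$, so $\fF\upharpoonright\calD(\Omega;E)$ is an $\bsA$-charge. For a general $g\in BV_{\bsA,c}(\Omega)$, applying Lemma~\ref{lemmaaprox} one picks $(\varphi_{i})\subseteq\calD(\Omega;E)$ with $\varphi_{i}\twoheadrightarrow g$, $\lim_{i}\|A\varphi_{i}\|_{1}=\|D_{\bsA}g\|$ and $\supp\varphi_{i}\subseteq K'$ for some $K'\subset\subset\Omega$ containing $\supp g$; then $(g-\varphi_{i})$ lies eventually in $BV_{\bsA,K',\lambda'}$ with $\lambda':=\|D_{\bsA}g\|+\sup_{i}\|A\varphi_{i}\|_{1}<\infty$ and converges to $0$ in $W^{\nu-1,1}$, so that the $W^{\nu-1,1}$-continuity of $\fF$ on $BV_{\bsA,K',\lambda'}$ yields $\fF(\varphi_{i})\to\fF(g)$. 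This identifies $\fF$ with the unique extension of its restriction to $\calD(\Omega;E)$, completing the proof.

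The arguments are essentially bookkeeping once the topology has been set up; the only mildly delicate point is that $BV_{\bsA,K,\lambda}$ is not a linear subspace but a bounded convex subset, so in the forward direction one must carefully check that the difference $g_{1}-g_{2}$ still satisfies a uniform bound on its $\bsA$-variation~---~which is immediate from the triangle inequality~---~to translate the estimate \eqref{eq.ch2} into genuine $W^{\nu-1,1}$-continuity on this subset.
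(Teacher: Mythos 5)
Your proof is correct, but it takes a genuinely different route from the paper's. The paper does not argue directly: after constructing $\bsT_\bsA$, it observes in the preceding remark that the spaces $BV_{\bsA,K_k}(\Omega)$ are $W^{\nu-1,1}$-closed and that $\|D_\bsA\cdot\|$ is lower semicontinuous, and then reads off the proposition as an application of \cite[Proposition~3.11]{PMP}, a general result about such localized topologies. You instead verify the equivalence by hand: the forward implication reduces, via property (b), to $W^{\nu-1,1}$-continuity on each $BV_{\bsA,K,\lambda}$, which you extract from the extension estimate \eqref{eq.ch2} applied to differences $g_1-g_2$ (with $\|D_\bsA(g_1-g_2)\|\le 2\lambda$, the correct adjustment you rightly flag since $BV_{\bsA,K,\lambda}$ is convex but not linear); the converse uses $\|D_\bsA\varphi\|\le\|A(\cdot,D)\varphi\|_1$ to place any sequence $\varphi_i\twoheadrightarrow 0$ inside a single $BV_{\bsA,K,\lambda}$, and then Lemma~\ref{lemmaaprox} to transfer the conclusion to every $g\in BV_{\bsA,c}(\Omega)$. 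The abstract invocation is shorter and situates the statement within a ready-made framework; your hands-on computation is self-contained and, usefully, makes explicit the identification of an $\bsA$-charge on $\calD(\Omega;E)$ with its unique extension to $BV_{\bsA,c}(\Omega)$, a point the paper leaves implicit.
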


The following result will be useful in the sequel.
\begin{Corollary}\label{cor.compacite}
Assume that $K\subset\subset\R^n$ is a compact set and that $\lambda>0$ is a real number. If $(g_i)_{i\in I}\subseteq BV_{\bsA,K,\lambda}(U)$ converges to $0$ as distributions, \textit{i.e.} if one has $\int_U g_i\varphi\to 0$ for all $\varphi\in\calD(U;E^*)$, then the net $(\|g_i\|_{W^{\nu-1,1}})_{i\in I}$ also converges to $0$.
\end{Corollary}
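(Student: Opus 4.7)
My plan is to show that on the set $X:=BV_{\bsA,K,\lambda}(U)$ the $W^{\nu-1,1}$-topology $\tau_1$ and the distributional topology $\tau_d$ coincide. Once established, the hypothesis that $(g_i)$ converges to $0$ in $\tau_d$ will immediately deliver the desired convergence in $\tau_1$. The mechanism producing this equality will be the classical topological principle that a continuous bijection from a compact space to a Hausdorff space is a homeomorphism, applied to the identity map $(X,\tau_1)\to (X,\tau_d)$.

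The first preparation is a Poincar\'e-type estimate $\|g\|_{L^1}\lesssim \|g\|_{W^{\nu-1,1}}$ valid for every $g\in W^{\nu-1,1}$ with $\supp g\subseteq K$, obtained by iterating the SGN inequality together with H\"older's inequality exactly as in the computation performed in Example~\ref{ex.sob-neg}. Applied to differences of elements of $X$, this estimate yields three things: (a) $\|\cdot\|_{W^{\nu-1,1}}$ is a genuine norm on $X$, making $\tau_1|_X$ metrizable; (b) $\tau_1$-convergence on $X$ forces $L^1$-convergence and therefore distributional convergence, so $\tau_d|_X\subseteq \tau_1|_X$; and (c) $\tau_d|_X$ is Hausdorff, since $L^1$-functions supported in $K$ are separated by their action against $\calD(U;E^*)$.

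The crucial step is then $\tau_1$-compactness of $X$. Given any sequence $(g_n)\subseteq X$, the hypotheses of Proposition~\ref{prop.compacite} are met (common compact support $K\subset\subset U$ and $\sup_n\|D_\bsA g_n\|\leq \lambda$), so a subsequence converges in $W^{\nu-1,1}$ to some $g\in BV_{\bsA,c}(U)$. The Poincar\'e estimate upgrades this to $L^1$-convergence, so $\supp g\subseteq K$, and Remark~\ref{rmk.sci} gives $\|D_\bsA g\|\leq\lambda$; hence $g\in X$. Thus $X$ is sequentially compact, and metrizability from (a) promotes this to (topological) compactness. The identity $(X,\tau_1)\to(X,\tau_d)$ is then a continuous bijection from a compact space to a Hausdorff space, hence a homeomorphism, and the two topologies agree on $X$.

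The main technical point will be verifying closedness of $X$ under $\tau_1$-limits, i.e.\ checking that the limit furnished by Proposition~\ref{prop.compacite} really lies in $X$ (this is where semicontinuity of $\|D_\bsA\cdot\|$ and the Poincar\'e-type bound both become essential); once that compactness has been pinned down, the remainder of the argument is a routine topology-comparison.
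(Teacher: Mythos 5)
Your argument is correct and relies on exactly the same ingredients as the paper's: the $W^{\nu-1,1}$-compactness of $BV_{\bsA,K,\lambda}(U)$ supplied by Proposition~\ref{prop.compacite}, together with the Poincar\'e-type bound $\|g\|_{L^1}\lesssim\|g\|_{W^{\nu-1,1}}$ for $g$ supported in $K$ obtained by iterating the SGN inequality as in Example~\ref{ex.sob-neg}. Where the paper argues by contradiction, extracting a cluster point of a subnet and testing against a fixed $\varphi\in\calD(U;E^*)$, you instead establish the (slightly stronger, but equivalent here) statement that the $W^{\nu-1,1}$- and distributional topologies coincide on $BV_{\bsA,K,\lambda}(U)$ via the continuous-bijection-from-compact-to-Hausdorff principle; this is a cleaner packaging that has the added virtue of making explicit the closedness check (support contained in $K$ via $L^1$-convergence, $\|D_\bsA\cdot\|\le\lambda$ via Remark~\ref{rmk.sci}) required to upgrade the sequential precompactness given by Proposition~\ref{prop.compacite} to genuine compactness, a point the paper leaves implicit.
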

\begin{proof}
Proceed towards a contradiction, assume that $(g_i)_{i\in I}$ is as in the statement, and that $(\|g_i\|_{W^{\nu-1,1}})_{i\in I}$ fails to converge to $0$, meaning that there is $\epsilon>0$ such that for all $i\in I$, one can find $j\in J$ satisfying $j\geq i$ and $\|g_j\|_{W^{\nu-1,1}}\geq\epsilon$. Define then $J:=\{i\in I: \|g_j\|_{W^{\nu-1,1}}\geq \epsilon\}$, observe $J$ is a directed set and consider the net $(g_j)_{j\in J}$. Since $BV_{\bsA,K,\lambda}(U)$ is compact in the $W^{\nu-1,1}$ topology according to Proposition~\ref{prop.compacite}, we know that there exists a cluster point $g\in BV_{\bsA,K,\lambda}(U)$ of $(g_j)_{j\in J}$ in the $W^{\nu-1,1}$ topology. It's easy to see from the definition of $J$ that one has $g\neq 0$. On the other hand, fix $\varphi\in\calD(U;{E^*})$. Since $(g_j)_{j\in J}$ converges to $0$ as distributions, we get for $j\in J$:
$$
\int_U g_j\varphi\to 0.
$$
Yet we should also get for $j\in J$:
$$
\left|\int_U g_j\varphi-\int_U g\varphi\right|\leq C_{\varphi} \|g_j-g\|_{W^{\nu-1,1}(U)}\to 0,
$$
which implies that $\int_U g\varphi=0$. Since $\varphi\in\calD(U;{E^*})$ is arbitrary, this means that $g=0$, which is a contradiction; the proof is complete.
\end{proof}

We now turn to proving the key result for obtaining Theorem~\ref{mainthm}.}
\section{Towards Theorem~\ref{mainthm}}\label{sec3}

Throughout this section, we assume that the open set $U\subseteq\Omega$ supports inequalities of type \eqref{hp3ineq} and \eqref{jh}; we also assume that one has $\Lambda[\calD(U;{E})]\subseteq \Gamma[C(U,{F^*})]$.
\begin{Remark}
It follows from Theorem~\ref{thm.hp}, Proposition~\ref{prop.gradmod} and Remark~\ref{rmk.ODE} that for any $x_0\in \Omega$, one can find an open neigborhood $U$ of $x_0$ in $\Omega$ satisfying all the above assumptions.
\end{Remark}

Our intention is to prove the following result.

\begin{Theorem}
If $\fF:BV_{\bsA,c}(U)\to F$ is an $\bsA$-charge in $U$, then there exists $v\in C(U, {F^*})$ for which one has $\fF=\Gamma(v)$, \textit{i.e.} such that one has, for any $g\in BV_{\bsA,c}(U)$:
$$
\fF(g)=\int_U \bar{v}\cdot d\left[{D_A g}\right].
$$
\end{Theorem}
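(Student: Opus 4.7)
The plan is to push the charge $\fF$ through the ``gradient'' map $D_\bsA$ to obtain a linear functional on a subspace of compactly supported measures, and then to invoke a Hahn-Banach/Krein-Smulian argument to represent it by some continuous vector field $v$.

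I first observe that $D_\bsA: BV_{\bsA,c}(U)\to M_c(U,F)$ (where $M_c(U,F)$ denotes the space of compactly supported $F$-valued Radon measures on $U$) is injective: indeed, if $D_\bsA g=0$, the $BV_\bsA$ Sobolev-Gagliardo-Nirenberg inequality from Proposition~\ref{prop.SGNBV} forces the derivatives of $g$ of order $\nu-1$ to vanish, so (being compactly supported) $g\equiv 0$. This lets us unambiguously define $\widetilde{\fF}(D_\bsA g):=\fF(g)$ on the subspace $V:=D_\bsA[BV_{\bsA,c}(U)]\subseteq M_c(U,F)$, and the existence of $v\in C(U,F^*)$ with $\fF=\Gamma(v)$ then reduces to extending $\widetilde{\fF}$ to a linear functional on $M_c(U,F)$ continuous in the inductive-limit topology coming from the weak-$*$ topologies on each $M(K,F)=C(K,F^*)^*$; the dual of this locally convex space is precisely $C(U,F^*)$.

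The heart of the proof is the claim that, for every compact $K\subset\subset U$, the restriction $\widetilde{\fF}|_{V_K}$, where $V_K:=D_\bsA[BV_{\bsA,K}(U)]$, is weak-$*$ continuous on $V_K$ as a subspace of $M(K,F)$. To prove it I would combine the weak-$*$ compactness of $V_{K,\lambda}:=D_\bsA[BV_{\bsA,K,\lambda}(U)]$ in $M(K,F)$ for every $\lambda>0$ (a direct consequence of the $W^{\nu-1,1}$-compactness of $BV_{\bsA,K,\lambda}(U)$ from Proposition~\ref{prop.compacite} together with an easy uniform-approximation argument showing that $D_\bsA$ is continuous from the $W^{\nu-1,1}$ topology to the weak-$*$ topology on bounded pieces) with the Krein-Smulian/Banach-Dieudonn\'e theorem, which reduces weak-$*$ continuity on the whole $V_K$ to weak-$*$ continuity on each bounded part $V_{K,\lambda}$. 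Since $V_{K,\lambda}$ is weak-$*$ compact and metrizable, sequential continuity is enough: given $(g_n)\subseteq BV_{\bsA,K,\lambda}(U)$ with $D_\bsA g_n\to D_\bsA g$ weak-$*$, the local solvability of $\Delta_\bsA$ encoded in Remark~\ref{rmk.ODE} lets me rewrite, for any $\varphi\in\calD(U;E)$, $\int_U(g_n-g)\bar\varphi=\int_U\bar v_\varphi\cdot d[D_\bsA(g_n-g)]\to 0$, so $g_n\to g$ as distributions; Corollary~\ref{cor.compacite} then upgrades this to $g_n\to g$ in $W^{\nu-1,1}$, and Remark~\ref{rmk.conv-ch} finally gives $\fF(g_n)\to\fF(g)$, that is $\widetilde{\fF}(D_\bsA g_n)\to\widetilde{\fF}(D_\bsA g)$.

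Once this continuity holds for every $K$, $\widetilde{\fF}$ is continuous on $V$ for the inductive-limit topology on $M_c(U,F)$; Hahn-Banach for locally convex spaces extends it to a continuous linear functional on $M_c(U,F)$, which is necessarily of the form $\mu\mapsto\int_U\bar v\cdot d\mu$ for a (unique) $v\in C(U,F^*)$. Specializing $\mu=D_\bsA g$ yields $\fF(g)=\Gamma(v)(g)$ for every $g\in BV_{\bsA,c}(U)$, as required. The step I expect to be hardest is the weak-$*$ continuity assertion on each $V_K$: it forces one to identify the correct locally convex topology on $M_c(U,F)$, to verify that $V_K$ is weak-$*$ closed, and to weave together Proposition~\ref{prop.compacite}, Corollary~\ref{cor.compacite}, Remark~\ref{rmk.ODE}, and Krein-Smulian in just the right way.
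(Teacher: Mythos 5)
Your approach is genuinely different from the paper's: you transport $\fF$ through $D_\bsA$ to a functional $\widetilde{\fF}$ on the subspace $V := D_\bsA[BV_{\bsA,c}(U)]$ of compactly supported vector measures and then try to extend $\widetilde{\fF}$ to the whole measure space by Hahn--Banach, whereas the paper proves surjectivity of $\Gamma : C(U,F^*) \to CH_\bsA(U)$ via the Closed Range Theorem for maps between Fr\'echet spaces (after identifying $CH_\bsA(U)^*$ with $BV_{\bsA,c}(U)$ through the maps $\Phi$, $\Psi$, and showing the range of $\Gamma^*$ is strongly sequentially closed). The ingredients you assemble (injectivity of $D_\bsA$ via Proposition~\ref{prop.SGNBV}, compactness of $BV_{\bsA,K,\lambda}$ from Proposition~\ref{prop.compacite}, Krein--\v{S}mulian, and the sequential-continuity argument through Remark~\ref{rmk.ODE} and Corollary~\ref{cor.compacite}) are exactly those the paper weaves into the Claims inside the proofs of Lemma~\ref{lem.inv} and Lemma~\ref{lem.closed}, and your argument that each restriction $\widetilde{\fF}|_{V_K}$ is weak-$*$ continuous is correct (once you check $V_K$ is weak-$*$ closed, which the same compactness argument yields).

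There is, however, a genuine gap at the final step. To invoke Hahn--Banach you need $\widetilde{\fF}$ to be continuous on $V$ for the \emph{subspace} topology induced by a locally convex topology on the whole measure space whose continuous dual is $C(U,F^*)$. The ``inductive limit of weak-$*$ topologies'' you name is not a standard, well-behaved locally convex space (the steps carry weak-$*$ which is not Fr\'echet, so LF-theory does not apply), and, more importantly, even for a properly constructed localized topology, per-$K$ weak-$*$ continuity of $\widetilde{\fF}|_{V_K}$ does \emph{not} in general give continuity of $\widetilde{\fF}$ on $V$ in the \emph{induced} topology: the reduction of continuity to restrictions on bounded pieces (as in \cite[Prop.~3.11]{PMP}) holds for linear maps defined on the whole ambient space, not on an arbitrary subspace. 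Without that inference Hahn--Banach cannot be applied; and the alternative of extending separately to each $M(K,F)$ fails because those Hahn--Banach extensions are not canonical, so there is no mechanism forcing the resulting $v_K \in C(K,F^*)$ to agree on overlaps. This is precisely the difficulty the paper's Closed Range Theorem route is designed to circumvent, by keeping all the functional analysis between genuine Fr\'echet spaces.
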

To prove this theorem, we have to show that the map
$$
\Gamma: C( U , {F^*}) \longrightarrow CH_{\bsA}( U ), v\mapsto \Gamma(v),
$$
is surjective. In order to do this, we endow $C(U,{F^*})$ with the usual Fr\'echet topology of uniform convergence on compact sets, and $CH_\bsA ( U )$ with the Fr\'echet topology associated to the family of seminorms $(\|\cdot\|_K)_K$ defined by:
$$\|\fF\|_{K} := \sup\left\{ |\fF(g) |: g \in BV_{\bsA,K}( U ), \|D_\bsA g\|\leq 1\right\},$$
where $K$ ranges over all compact sets $K\subset\subset U $. The surjectivity of $\Gamma$ will be proven in case we show that $\Gamma$ is continuous and verifies the following two facts:
\begin{enumerate}
\item[(a)] $\Gamma[C( U ,{F^*})]$ is dense in $CH_{\bsA}( U )$.
\item[(b)] $\Gamma^{\ast}[CH_{\bsA}( U )^{\ast}]$ is sequentially closed in the strong topology of $C( U ,{ F^*})^*$.
\end{enumerate}
Indeed, it will then follow from the Closed Range Theorem \cite[Theorem~8.6.13]{EDW} together with\cite[Proposition~6.8]{PMP} and (b) that $\Gamma[C( U ,{F^*})]$ is closed in $CH_{\bsA}( U )$. Using (a) we shall then conclude that one has:$$\Gamma[C( U ,{F^*})]=CH_{\bsA}( U ),$$
\textit{i.e.} that $\Gamma$ is surjective.

The strategy of the proof of (b) follows the lines of De Pauw and Pfeffer's proof in \cite{PP}. For the proof of (a), however, the proof presented below is slightly different from their approach; we namely manage to avoid an explicit smoothing process and choose instead to use an abstract approach similar to the one used in \cite{M} in order to solve the equation $d\omega=F$.

Let us start by showing that $\Gamma: C( U ,{F^*}) \longrightarrow CH_{\bsA}( U )$ is linear and continuous. Indeed given a compact set $K\subset\subset  U $ and $g \in BV_{\bsA,K}( U )$ we have:
$$ |\Gamma(v)(g) | = \left|  \int_{ U } \bar{v}\cdot d\left[{D_{\bsA}g}\right] \right| \leq \| D_{\bsA}g \|\|v\|_{\infty,K},   $$
which implies  $\|\Gamma(v)\|_{K}\leq \|v\|_{\infty,K}$.
Next we identify the dual space $CH_{\bsA}( U )^{*}$.
\subsection{Identifying $CH_\bsA( U )^*$}
\begin{proposition}
The map $\Phi: BV_{\bsA,c}( U ) \longrightarrow CH_{\bsA}( U )^{*}$ given by $\Phi(g)(F):=F(g)$ is a linear bijection.
\end{proposition}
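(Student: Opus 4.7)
The plan is to establish linearity, well-definedness, injectivity, and surjectivity in that order. Linearity is immediate from the definition, and well-definedness amounts to checking that $\Phi(g)$ is continuous on $CH_\bsA(U)$ in the Fréchet topology given: for any $g\in BV_{\bsA,c}(U)$ with $\supp g\subseteq K\subset\subset U$, the bound $|\Phi(g)(\fF)|=|\fF(g)|\le\|\fF\|_K\,\|D_\bsA g\|$ (valid after normalizing $g$ and handling the case $\|D_\bsA g\|=0$ through the SGN inequality of Proposition~\ref{prop.SGNBV}, which forces $g\equiv 0$) does the job. For injectivity I would simply test $\Phi(g)=0$ against the charges $\Lambda(\varphi)$ with $\varphi\in\calD(U;E^*)$ from Example~\ref{ex.lambda}: this forces $\int_U\bar\varphi\, g=0$ for every such $\varphi$, hence $g=0$.

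For surjectivity, starting from $\Psi\in CH_\bsA(U)^*$, Fréchet continuity of $\Psi$ produces a compact set $K\subset\subset U$ and a constant $C>0$ with $|\Psi(\fF)|\le C\|\fF\|_K$ for every charge $\fF$. The key geometric input is that the unit-variation set $\mathcal K:=BV_{\bsA,K,1}$ is compact in $W^{\nu-1,1}$ (Proposition~\ref{prop.compacite}) and norm-bounded there (combining Proposition~\ref{prop.SGNBV} with Hölder's inequality, using $\supp h\subseteq K$). Since the restriction map $\fF\mapsto\fF|_{\mathcal K}$ embeds $CH_\bsA(U)$ into $C(\mathcal K;\bbC)$ with sup-norm precisely $\|\fF\|_K$, the next step is Hahn-Banach followed by the Riesz-Markov representation theorem, which produces a complex Borel measure $\mu$ on $\mathcal K$ such that
$$
\Psi(\fF)=\int_{\mathcal K}\fF(h)\,d\mu(h)\qquad\text{for all }\fF\in CH_\bsA(U).
$$
The natural candidate is then $g:=\int_{\mathcal K} h\,d\mu(h)$, viewed as a Bochner integral in the Banach space of $W^{\nu-1,1}$-functions supported in $K$. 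A Fubini computation should give, for any $v\in\calD(U;F^*)$,
$$
\int_U g\cdot\overline{A^*v}\;=\;\int_{\mathcal K}\Gamma(v)(h)\,d\mu(h)\;=\;\Psi(\Gamma(v)),
$$
bounded by $C\|\Gamma(v)\|_K\le C\|v\|_\infty$ via the continuity of $\Gamma$ noted at the start of this section; taking the supremum over $v$ and testing $g$ against $\varphi\in\calD(U\setminus K)$ respectively yield $\|D_\bsA g\|\le C$ and $\supp g\subseteq K$, so $g\in BV_{\bsA,c}(U)$.

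The hard part will be promoting the identity $\Psi(\Gamma(v))=\Gamma(v)(g)$ to $\Psi(\fF)=\fF(g)$ for \emph{every} $\bsA$-charge $\fF$: this does not come for free from the Bochner formalism, because $\fF$ is not \emph{a priori} continuous on the whole of $W^{\nu-1,1}$, only on each compact sublevel $BV_{\bsA,K,\lambda}$. My plan is to approximate $\mu$ weak-$*$ in $C(\mathcal K)^*$ by discrete measures $\mu_n=\sum_i\mu(A_{n,i})\delta_{h_{n,i}}$ associated to increasingly fine Borel partitions $\{A_{n,i}\}_i$ of $\mathcal K$, which automatically keeps $\|\mu_n\|_{TV}\le\|\mu\|_{TV}$. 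The barycenters $g_n:=\sum_i\mu(A_{n,i})h_{n,i}$ then lie in $BV_{\bsA,K,\|\mu\|_{TV}}$ by subadditivity of $\|D_\bsA\cdot\|$, and they converge to $g$ weakly in $W^{\nu-1,1}$ (tested against arbitrary $\ell\in(W^{\nu-1,1})^*$, whose restriction to $\mathcal K$ lies in $C(\mathcal K)$). The $W^{\nu-1,1}$-compactness of this sublevel then forces a subsequence $g_{n_k}$ to converge strongly to $g$, so the $\bsA$-charge continuity of $\fF$ on $BV_{\bsA,K,\|\mu\|_{TV}}$ gives $\fF(g_{n_k})\to\fF(g)$, while weak-$*$ convergence of $\mu_n$ gives $\fF(g_n)=\int_{\mathcal K}\fF\,d\mu_n\to\int_{\mathcal K}\fF\,d\mu=\Psi(\fF)$. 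Matching the two limits yields $\fF(g)=\Psi(\fF)$, and thus $\Phi(g)=\Psi$.
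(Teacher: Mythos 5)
Your argument is correct, but your surjectivity proof follows a genuinely different route than the paper's. The paper constructs the inverse explicitly as $\Psi(\alpha)[\varphi]:=\alpha[\Lambda(\varphi)]$, checks via a polar-set analysis (Claim~\ref{cl.w*}) that $\Psi(\alpha)\in BV_{\bsA,c}(U)$, and then proves $\Phi\circ\Psi=\mathrm{Id}$ by introducing the evaluation maps $\Delta_F:\alpha\mapsto F[\Psi(\alpha)]$ and invoking the Banach--Grothendieck theorem (which characterizes weak$^*$-continuity on equicontinuous subsets of a dual). You instead factor the given functional through restriction onto the compact set $\mathcal{K}:=BV_{\bsA,K,1}\subset W^{\nu-1,1}$, apply Hahn--Banach and Riesz--Markov to represent it by a complex Borel measure $\mu$ on $\mathcal{K}$, manufacture the candidate preimage as the Bochner integral $g=\int_{\mathcal{K}}h\,d\mu(h)$, and then upgrade $\Psi(\Gamma(v))=\Gamma(v)(g)$ to $\Psi(\fF)=\fF(g)$ for all $\bsA$-charges $\fF$ by Riemann-sum (discrete-measure) approximation of $\mu$ combined with the $W^{\nu-1,1}$-compactness of the sublevels (Proposition~\ref{prop.compacite}). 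Both proofs ultimately rely on that compactness; you trade the abstraction of Banach--Grothendieck for Riesz--Markov and an explicit approximation argument, which is more hands-on but longer. Two small glosses worth recording: (1) the restriction map $\fF\mapsto\fF|_{\mathcal{K}}$ is not injective, so ``embeds'' is loose --- what you really need (and have) is that $\Psi$ annihilates its kernel, since $\|\fF\|_K=0$ forces $|\Psi(\fF)|\le C\|\fF\|_K=0$, so $\Psi$ factors through the image subspace of $C(\mathcal{K})$ before Hahn--Banach; and (2) compactness of $\mathcal{K}$ requires sequential precompactness (Proposition~\ref{prop.compacite}) \emph{together with} closedness, the latter from lower semicontinuity of $\|D_\bsA\cdot\|$ (Remark~\ref{rmk.sci}) and preservation of supports under $L^1$-convergence.
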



First let us check that $\Phi$ is well defined. In fact, given $K \subset \subset  U $ and $g\in BV_{\bsA,K}(U)$ we have
$$ \left|\Phi(g)(F) \right|= \left|  F(g) \right|\leq \|D_{\bsA}g\|\|F\|_{K},$$
according to the definition of $\|\cdot\|_{K}$. Hence $\Phi(g)$ is continuous and $\Phi(g) \in CH_{\bsA}( U )^{\ast}$. \\

To show that $\Phi$ is injective, let $g \in BV_{\bsA,c}( U )$ be such that $\Phi(g)=0$. Then for any $B \subset  U $ measurable and bounded and for any $e^*\in E^*$ we have:
\begin{equation}\nonumber
\int_{B}{e^*g}=\int_{ U }\chi_{B}{e^*g}=\Lambda(\chi_{B}e^*)(g)=\Phi(g)[\Lambda(\chi_{B}e^*)]=0.
\end{equation}
Thus $e^*g=0$ a.e. in $ U $, which implies that $g=0$ (and hence that $\Phi$ injective) since $e^*\in E^*$ is arbitrary.\\

The next step is to prove that $\Phi$ is surjective. To show this property we shall define a right inverse for $\Phi$, called $\Psi$.\\

Let $\Psi: CH_{\bsA}( U )^{\ast} \longrightarrow \calD' ( U; E^* )$ be defined by:
\begin{equation}
\Psi(\alpha)[\varphi] :=\alpha[\Lambda(\varphi)].
\end{equation}
We claim that $\Psi$ is well defined, \textit{i.e.} that for $\alpha \in CH_{\bsA}( U )^{\ast}$, we have $\Psi(\alpha) \in BV_{\bsA,c}( U )$. Indeed, given $\alpha \in CH_{\bsL}( U )^{\ast}$ there exist $C>0$ and $K \subset \subset  U $ such that for all $F \in CH_{\bsL}( U )$ we have $|\alpha(F)|\leq C\|F\|_{K}$. In particular, for every $\varphi \in \calD( U; E^* )$ we have:
\begin{multline*}
{|\Psi(\alpha)(\varphi)| }\leq C \|\Lambda(\varphi)\|_{K}\\\leq C{\sup \left\{ |\Lambda(\varphi)(g)| :  g \in BV_{\bsA,K}( U ),  \|D_{\bsA}g\|\leq 1  \right\}}
\leq { C \sup \left\{ \int_{ U }|\bar{\varphi}g|: g \in BV_{\bsA,K}( U ),  \|D_{\bsA}g\|\leq 1  \right\}},
\end{multline*}
from which it already follows that one has $\supp \Phi(\alpha)\subseteq K$, since the above inequalities yield $\Phi(\alpha)(\varphi)=0$ if $\supp\varphi\cap K=\emptyset$. According to Remark~\ref{rmk.sobolev}, we see moreover that for any $g\in BV_{\bsA, K}(U)$ satisfying $\|D_{\bsA} g\|\leq 1$, one has: 
\begin{align*}
\int_U |\bar{\varphi} g| & \leq \|\varphi\|_{\bW^{\nu-1,N/(N-1)}(U;E)^*} \|g\|_{\bW^{\nu-1,N/(N-1)}(U;E)}\\
&\leq C(K,\nu) \|\varphi\|_{\bW^{\nu-1,N/(N-1)}(U;E)^*} \|g\|_{W^{\nu-1,N/(N-1)}(U;E)} \\
&\leq \tilde{C}(K,\nu) \|\varphi\|_{\bW^{\nu-1,N/(N-1)}(U;E)^*},
\end{align*}
where the latter inequality comes from the SGN inequality in $BV_{\bsA}$ (Proposition~\ref{prop.SGNBV}). It follows then from the reflexivity of $\bW^{\nu-1, N/(N-1)}(U;E)$ that one has $g\in \bW^{\nu-1,N/(N-1)}(U;E)\subseteq W^{\nu-1,N/(N-1)}(U;E)$: using, indeed, Hahn-Banach's theorem we can extend the map $T:\calD(U;E^*)\to\bbC,\psi\mapsto \int_U g\psi$ into a map $\widetilde{T}\in [\bW^{\nu-1,N/(N-1)}(U;E)^*]^*$ itself being the evaluation at $h\in \bW^{\nu-1,N/(N-1)}(U;E)$, so that one has for $\varphi\in\calD(U;E^*)$:
$$
\int_U \bar{\varphi}g=T(\bar{\varphi})=\bar{T}(\bar{\varphi})=\la \bar{\varphi}, h\ra=\int_U \bar{\varphi} h.
$$
This yields $g=h$ a.e., meaning that one has $g\in\bW^{\nu-1, N/(N-1)}(U;E)$.

Moreover, for any $v \in C^{\infty}_{c}( U , F^*)$ we have:
\begin{eqnarray*}
 |\Psi(\alpha)[A^{\ast}(\cdot,D)v]|&=& |\alpha[\Lambda(A^{\ast}(\cdot,D)v)]|,\\ & \leq&{ C \|\Lambda(A^{\ast}(\cdot,D)v)\|_{K}}\\
&\leq & C{\sup \left\{ \left|\int_{ U }g\,\overline{A^{\ast}(\cdot,D)v}\right|: g \in BV_{\bsA,K}( U ),  \|D_{\bsA}g\|\leq 1  \right\}}\\
& \leq & C \sup \left\{ \|D_{\bsA}g\| \|v\|_{\infty}  : g \in BV_{\bsA,K}( U ),  \|D_{\bsA}g\|\leq 1  \right\}\\
&\leq  &C{\sup \|v\|_{\infty},  }
\end{eqnarray*}
so that one has $\Psi(\alpha) \in BV_{\bsA,c}( U )$.

\begin{lemma}\label{lem.inv} The maps $\Phi$ and $\Psi$ defined above are inverses, \textit{i.e.} we have:
\begin{enumerate}
\item[(i)] $\Psi \circ \Phi=Id_{BV_{\bsA,c}( U )}$;
\item[(ii)] $\Phi \circ \Psi=Id_{CH_{\bsA}( U )^{*}}$ (in particular, $\Phi$ is surjective).
\end{enumerate}
\end{lemma}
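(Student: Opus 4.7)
The plan is to treat (i) and (ii) separately: the first by directly unfolding definitions, the second by first establishing the identity on a natural subspace of $CH_{\bsA}(U)$ and then extending by continuity through a density argument.

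For (i), given $g\in BV_{\bsA,c}(U)$ and a test function $\varphi\in\calD(U;E^*)$, I would simply unfold the definitions of $\Psi$ and $\Phi$ and invoke the action of $\Lambda(\varphi)$ on $BV_{\bsA,c}(U)$ recorded in Example~\ref{ex.lambda}:
\[
\Psi(\Phi(g))[\varphi] \;=\; \Phi(g)[\Lambda(\varphi)] \;=\; \Lambda(\varphi)(g) \;=\; \int_U \bar{\varphi}\,g \;=\; g(\varphi),
\]
the last equality coming from the canonical identification of $g\in L^1_{\loc}(U)$ with the distribution $T_g$. Since $\Psi(\Phi(g))$ and $g$ agree as distributions on $U$ and both belong to $BV_{\bsA,c}(U)$, they are equal.

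For (ii), setting $g:=\Psi(\alpha)\in BV_{\bsA,c}(U)$, one has $\Phi(g)\in CH_{\bsA}(U)^*$, and the goal is $\Phi(g)=\alpha$. My first step would be to verify agreement of $\Phi(g)$ and $\alpha$ on the linear subspace $Y:=\Lambda[\calD(U;E^*)]$ of $CH_{\bsA}(U)$: for any $\varphi\in\calD(U;E^*)$, a computation mirroring (i) yields
\[
\Phi(g)(\Lambda(\varphi)) \;=\; \Lambda(\varphi)(g) \;=\; \int_U \bar{\varphi}\,g \;=\; \Psi(\alpha)(\varphi) \;=\; \alpha(\Lambda(\varphi)).
\]
My second step would be to promote this equality from $Y$ to all of $CH_{\bsA}(U)$ using the fact that both $\Phi(g)$ and $\alpha$ are continuous linear functionals on the Fréchet space $CH_{\bsA}(U)$ (endowed with the seminorms $\|\cdot\|_K$); it then suffices that $Y$ be dense in $CH_{\bsA}(U)$ in this topology.

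The main obstacle will be precisely this density of $Y$ in $CH_{\bsA}(U)$. By Remark~\ref{rmk.ODE}, one has $Y\subseteq \Gamma[C^\infty(U;F^*)]$, so the density question is tightly linked to property (a) of the outlined scheme. A Hahn--Banach attack — showing that any $\beta\in CH_{\bsA}(U)^*$ vanishing on $Y$ must itself vanish — reduces, via the very computation used in (i), to injectivity of $\Psi$, which is in turn equivalent to surjectivity of $\Phi$; this route is therefore circular unless density is obtained independently. I would instead try to establish density directly by a smoothing/mollification argument: given $F\in CH_{\bsA}(U)$ and a compact $K\subset\subset U$, approximate the restriction of $F$ to the $\bsT_{\bsA}$-bounded set $BV_{\bsA,K,1}(U)$ uniformly by restrictions of elements of $Y$, leveraging the $W^{\nu-1,1}$-approximation supplied by Lemma~\ref{lemmaaprox}, the $W^{\nu-1,1}$-compactness of $BV_{\bsA,K,1}(U)$ furnished by Proposition~\ref{prop.compacite}, and the net-compatible convergence recorded in Corollary~\ref{cor.compacite}, in order to control the approximation uniformly on $BV_{\bsA,K,1}(U)$.
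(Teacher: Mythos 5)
Part (i) of your proposal is correct and coincides with the paper's argument: $\Psi(\Phi(g))[\varphi]=\Phi(g)[\Lambda(\varphi)]=\Lambda(\varphi)(g)=\int_U\bar\varphi\,g$, so $\Psi(\Phi(g))=g$ as distributions, hence in $BV_{\bsA,c}(U)$.

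Part (ii) is where the real work lies, and your proposal has a genuine gap there. You correctly observe that the ``agreement on $Y=\Lambda[\calD(U;E^*)]$ plus density'' route is circular: density of $Y$ in $CH_{\bsA}(U)$ via Hahn--Banach amounts to injectivity of $\Psi$, which (given (i)) is equivalent to $\Phi\circ\Psi=\mathrm{Id}$, i.e.\ to the very statement (ii) you are trying to prove. (In the paper, density of $\Lambda[\calD(U;E^*)]$ is in fact deduced \emph{from} Lemma~\ref{lem.inv}, as a corollary, not the other way around.) Having flagged the circularity, you then propose to break it by a direct smoothing/mollification argument ``leveraging Lemma~\ref{lemmaaprox}, Proposition~\ref{prop.compacite}, Corollary~\ref{cor.compacite}'', but you never actually carry this out; as it stands, there is no proof of (ii). Note also that the paper explicitly says it chooses its route \emph{precisely} to avoid carrying out such an explicit smoothing process, which in this higher-order, variable-coefficient setting is delicate.

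The paper bypasses the density question altogether via a duality argument. For $F\in CH_\bsA(U)$ one introduces $\Delta_F:CH_\bsA(U)^*\to\bbC$, $\alpha\mapsto F[\Psi(\alpha)]$. Using Claim~\ref{cl.w*} (which shows that for $\alpha\in V(K,\epsilon)^\circ$ one has $\supp\Psi(\alpha)\subseteq K$ and $\|D_\bsA\Psi(\alpha)\|\le 1/\epsilon$) together with Corollary~\ref{cor.compacite}, one shows that $\Delta_F$ is weak$^*$-continuous on each polar $V(K,\epsilon)^\circ$. The Banach--Grothendieck theorem then produces $\tilde F\in CH_\bsA(U)$ with $\Delta_F(\alpha)=\alpha(\tilde F)$ for every $\alpha$; applying this to $\alpha=\Phi(g)$ and using part (i) gives $F(g)=F[\Psi(\Phi(g))]=\Delta_F[\Phi(g)]=\Phi(g)(\tilde F)=\tilde F(g)$ for all $g\in BV_{\bsA,c}(U)$, so $F=\tilde F$ and hence $F[\Psi(\alpha)]=\alpha(F)$ for all $\alpha$, which is exactly $\Phi\circ\Psi=\mathrm{Id}$. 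You should either reproduce this Banach--Grothendieck mechanism (which is the step you are missing) or actually supply the direct density proof you only sketch.
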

In order to prove the previous lemma, we shall need some observations concerning the polar sets of some neighborhoods of the origin in $CH_{\bsA}( U )$. First, observe that the family of all sets $V(K,\epsilon)$ (where $K$ ranges over all compact subsets of $ U $, and $\epsilon$ over all positive real numbers) defined by:
$$
V(K,\epsilon):=\{F\in CH_\bsA( U ):\|F\|_K\leq\epsilon\},
$$
is a basis of neighborhoods of the origin in $CH_\bsA ( U )$.
\begin{Claim}\label{cl.w*}
Fix $K\subset\subset U $ a compact set and a real number $\epsilon>0$. For any $\alpha\in V(K,\epsilon)^\circ$, one has:
\begin{itemize}
\item[(i)] $\supp\Psi(\alpha)\subseteq K$;
\item[(ii)] $\|D_\bsA \Psi(\alpha)\|\leq\frac{1}{\epsilon}$.
\end{itemize}
\end{Claim}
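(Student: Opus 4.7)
The plan is to exploit directly the definition of the polar set: $\alpha\in V(K,\epsilon)^\circ$ means $|\alpha(F)|\leq 1$ for every $F\in CH_\bsA(U)$ with $\|F\|_K\leq\epsilon$. Since $\Psi(\alpha)[\varphi]=\alpha[\Lambda(\varphi)]$ by construction, both assertions reduce to estimating the seminorm $\|\Lambda(\cdot)\|_K$ on suitable test fields and then plugging appropriate scalar multiples of these into the polarity condition. The only prerequisite to keep in mind is that $\Psi(\alpha)$ has just been shown to lie in $BV_{\bsA,c}(U)$, so that $\|D_\bsA\Psi(\alpha)\|$ is meaningful and $\Psi(\alpha)[A^*(\cdot,D)v]$ agrees with $\int_U\Psi(\alpha)\overline{A^*(\cdot,D)v}$.

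For (i), I would take any $\varphi\in\calD(U;E^*)$ with $\supp\varphi\cap K=\emptyset$. Because every $g\in BV_{\bsA,K}(U)$ is supported in $K$, the pairing $\Lambda(\varphi)(g)=\int_U\bar\varphi\,g$ vanishes identically, so $\|\Lambda(\varphi)\|_K=0$. Hence $t\Lambda(\varphi)\in V(K,\epsilon)$ for every $t>0$, and polarity yields $t|\Psi(\alpha)(\varphi)|\leq 1$ for all $t>0$, forcing $\Psi(\alpha)(\varphi)=0$. This gives $\supp\Psi(\alpha)\subseteq K$.

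For (ii), I would rewrite the $\bsA$-variation of $\Psi(\alpha)$ as the supremum of $|\Psi(\alpha)[A^*(\cdot,D)v]|$ over $v\in C^\infty_c(U,F^*)$ with $\|v\|_\infty\leq 1$, and, for each such $v$, estimate $\|\Lambda(A^*(\cdot,D)v)\|_K$. Given a competitor $g\in BV_{\bsA,K}(U)$ with $\|D_\bsA g\|\leq 1$, the defining identity \eqref{eq.def-mes} of $D_\bsA g$ allows me to rewrite $\Lambda(A^*(\cdot,D)v)(g)$ as $\int_U\bar v\cdot d[D_\bsA g]$, which is bounded by $\|v\|_\infty\|D_\bsA g\|\leq 1$. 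Thus $\|\Lambda(A^*(\cdot,D)v)\|_K\leq 1$, so $\epsilon\Lambda(A^*(\cdot,D)v)\in V(K,\epsilon)$; polarity then yields $|\Psi(\alpha)[A^*(\cdot,D)v]|\leq 1/\epsilon$, and taking the supremum over $v$ delivers $\|D_\bsA\Psi(\alpha)\|\leq 1/\epsilon$.

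No serious obstacle is anticipated: both items unwind almost immediately from the definitions of $\Psi$, $\Lambda$, the seminorm $\|\cdot\|_K$ and the $\bsA$-variation. The only subtle point is to carry out the computations at the level of the already established extension of $\Lambda(A^*(\cdot,D)v)$ to $BV_{\bsA,c}(U)$, so that the formula \eqref{eq.def-mes} is applicable to every $g$ in the test class defining $\|\cdot\|_K$.
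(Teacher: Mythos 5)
Your proposal is correct and follows essentially the same route as the paper: both parts are proved by showing that suitable scalar multiples of $\Lambda(\varphi)$ (for $\varphi$ supported away from $K$) and $\Lambda(A^*(\cdot,D)v)$ (for $\|v\|_\infty\leq 1$) lie in $V(K,\epsilon)$, and then invoking the polarity inequality $|\alpha(F)|\leq 1$.
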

\begin{proof}
To prove (i), assume that $\varphi\in\calD(U)$ satisfies $K\cap \supp\varphi=\emptyset$. Then, we get for $\lambda>0$:
$$
\|\lambda\Lambda(\varphi)\|_K=\sup\left\{\lambda \left|\int_ U  \bar{\varphi} g\right|:g\in BV_{\bsA, K,\lambda}( U ), \|D_\bsA g\|\leq 1\right\}=0.
$$
In particular this yields $\lambda \Lambda(\varphi)\in V(K,\epsilon)$. We hence obtain:
$$
\lambda |\alpha[\Lambda(\varphi)]|=|\alpha[\lambda\Lambda(\varphi)]|\leq 1,
$$
for any $\lambda>0$. Since $\lambda>0$ is arbitrary, this implies that one has $\alpha[\Lambda(\varphi)]=0$, \textit{i.e.} that $\Psi(\alpha)(\varphi)=0$. We may now conclude that $\supp\Psi(\alpha)\subseteq K$.
In order to obtain statement (ii), fix $v\in C^\infty_c( U ,F^*)$ satisfying $\|v\|_{\infty}\leq 1$ and compute:
\begin{eqnarray*}
\|\epsilon \Lambda[A^*(\cdot, D) v]\|_K&=&\epsilon \|\Lambda[{A^*(\cdot, D)v}]\|_K,\\&=&\epsilon\sup\left\{\left|\int_ U  {g}\,{\overline{A^*(\cdot, D) v}}\right|:g\in BV_{\bsA,K},\|D_\bsA g\|\leq 1\right\},\\
&=&\epsilon\sup\left\{\left|\int_U \bar{v}\cdot d[D_\bsA {g}]\right|: g\in BV_{\bsA, K}, \|D_\bsA g\|\leq 1\right\},\\
&\leq &\epsilon \sup\{\|D_\bsA g\|\cdot \|v\|_\infty:g\in BV_{\bsA, K}, \|D_\bsA g\|\leq 1\},\\
&\leq &\epsilon,
\end{eqnarray*}
so that one has $\epsilon\Lambda({A^*(\cdot, D)v})\in V(K,\epsilon)$. It hence follows that:
$$
\epsilon |\Psi(\alpha)(A^*(\cdot, D)v)|=|\alpha[\epsilon\Lambda({A^*(\cdot, D) v})]|\leq 1,
$$
and we thus get:
$$
|\Psi(\alpha)[A^*(\cdot, D)v]|\leq\frac{1}{\epsilon}.
$$
Since $v\in \calD( U ,F^*)$ is an arbitrary vector field satisfying $\|v\|_\infty\leq 1$, this yields $\|D_\bsA \Psi(\alpha)\|\leq\frac{1}{\epsilon}$, and concludes the proof of the claim.
\end{proof}

We now turn to proving Lemma~\ref{lem.inv}.
\begin{proof}[Proof of Lemma~\ref{lem.inv}]
To prove part (i), fix $g\in BV_{\bsA,c}( U )$ and compute, for $\varphi\in\calD( U {; E^*})$:
$$
\Psi[\Phi(g)](\varphi):=\Phi(g)[\Lambda({\varphi})]=\Lambda({\varphi})({g})=\int_ U  \bar{\varphi}g,
$$
that is, $\Psi[\Phi(g)]=g$ in the sense of distributions.

In order to prove part (ii), fix $\alpha\in CH_{\bsA}^*( U )$. We have to show that, for any $F\in CH_\bsA( U )$, we have:
$$
\Phi[\Psi(\alpha)](F)=\alpha(F),
$$
\textit{i.e.} that for any $F\in CH_\bsA( U )$, one has:
$$
F[\Psi(\alpha)]=\alpha(F).
$$
To this purpose, define for any $F\in CH_\bsA( U )$ a map:
$$
\Delta_F:CH_\bsA( U )^*\to\bbC, \alpha\mapsto \Delta_F(\alpha):=F[\Psi(\alpha)].
$$

\begin{Claim}\label{cl.bg}
Given $F\in CH_\bsA( U )$, the map $\Delta_F$ is weakly$^*$-continuous on $V(K,\epsilon)^\circ$ for all $K\subset\subset U $ and $\epsilon>0$.
\end{Claim}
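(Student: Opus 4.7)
The plan is to unwind the definition of $\Delta_F$ and combine it with the compactness result (Proposition~\ref{prop.compacite} in its Corollary~\ref{cor.compacite} form) together with the automatic continuity properties of $\bsA$-charges (Remark~\ref{rmk.conv-ch}). The key observation is that, by Claim~\ref{cl.w*}, the map $\Psi$ sends the polar $V(K,\epsilon)^\circ$ into the equi-bounded set $BV_{\bsA,K,1/\epsilon}(U)$, which is exactly the type of set on which the localized topology $\bsT_\bsA$ coincides with the $W^{\nu-1,1}$-topology.

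More precisely, I would take a net $(\alpha_i)_{i\in I}\subseteq V(K,\epsilon)^\circ$ that is weakly$^*$-convergent to some $\alpha\in V(K,\epsilon)^\circ$ and show that $\Delta_F(\alpha_i)\to \Delta_F(\alpha)$. Testing the weak$^*$-convergence against the specific elements $\Lambda(\varphi)\in CH_\bsA(U)$ with $\varphi\in\calD(U;E^*)$ gives
$$
\Psi(\alpha_i)(\varphi)=\alpha_i[\Lambda(\varphi)]\longrightarrow \alpha[\Lambda(\varphi)]=\Psi(\alpha)(\varphi),
$$
so that $\Psi(\alpha_i)\to\Psi(\alpha)$ in $\calD'(U;E^*)$. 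By Claim~\ref{cl.w*}, the distributions $g_i:=\Psi(\alpha_i)-\Psi(\alpha)$ all have support contained in $K$ and satisfy $\|D_\bsA g_i\|\le 2/\epsilon$; in particular $(g_i)_{i\in I}\subseteq BV_{\bsA,K,2/\epsilon}(U)$.

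At this point I would invoke Corollary~\ref{cor.compacite} to upgrade the distributional convergence $g_i\to 0$ to $\|g_i\|_{W^{\nu-1,1}}\to 0$. Finally, Remark~\ref{rmk.conv-ch} applied to the $\bsA$-charge $F$ (the hypotheses are satisfied because $(g_i)$ is compactly supported in $K$, converges to $0$ in $W^{\nu-1,1}$, and has $\sup_i\|D_\bsA g_i\|<+\infty$) yields $F(g_i)\to 0$, i.e.\ $\Delta_F(\alpha_i)\to\Delta_F(\alpha)$.

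The main subtlety to watch out for is that weak$^*$-continuity on a subset of the dual is a net-theoretic property (the Banach--Alaoglu ball need not be metrizable here), so the argument really has to be written with nets rather than sequences; the rest of the reasoning, however, passes to nets with no modification since Corollary~\ref{cor.compacite} is already stated for nets, and the continuity of the extended $\bsA$-charge in Remark~\ref{rmk.conv-ch} depends only on the two quantitative controls $\|g_i\|_{W^{\nu-1,1}}\to 0$ and $\sup_i\|D_\bsA g_i\|<+\infty$, both of which survive the passage from sequences to nets.
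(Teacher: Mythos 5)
Your proof is correct and takes essentially the same route as the paper's: test weak$^*$-convergence against $\Lambda(\varphi)$ to get distributional convergence of $\Psi(\alpha_i)$, invoke Claim~\ref{cl.w*} to land in a set $BV_{\bsA,K,\lambda}$, upgrade via Corollary~\ref{cor.compacite} to $W^{\nu-1,1}$-convergence, and finish with the continuity of the extended charge. The only (minor) difference is that the paper's version checks continuity only for nets converging to $0$ in $V(K,\epsilon)^\circ$ (which suffices by the balanced/convex structure of the polar upon doubling $\lambda$), whereas you explicitly handle an arbitrary limit $\alpha\in V(K,\epsilon)^\circ$ by forming the difference $g_i:=\Psi(\alpha_i)-\Psi(\alpha)$ and using subadditivity of $\|D_\bsA\cdot\|$ to get the bound $2/\epsilon$; this is a touch more careful and entirely consistent with the paper's intent. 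Your remark that Remark~\ref{rmk.conv-ch} passes to nets is also right and is needed, since the argument really relies only on inequality \eqref{eq.ch2}, not on any sequential compactness.
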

To prove this claim, fix $K\subset\subset  U $, $\epsilon>0$ and assume that $(\alpha_i)_{i\in I}\subseteq V(K,\epsilon)^\circ$ is a net weak$^*$-converging to $0$. In particular one gets:
\begin{itemize}
\item[(a)] for any $\varphi\in\calD( U {; E^*})$, we have $\Lambda(\varphi)\in CH_\bsA( U )$ and hence the net $(\Psi(\alpha_i)(\varphi))_{i\in I}=(\alpha_i[\Lambda(\varphi)])_{i\in I}$ converges to $0$; in other terms, the net $(\Psi(\alpha_i))_{i\in I}$ converges to $0$ in the sense of distributions. 
\end{itemize}
According to Claim~\ref{cl.w*}, we moreover have:
\begin{itemize}
\item[(b)] $\supp\Psi(\alpha_i)\subseteq K$ for each $i\in I$;
\item[(c)] $c:=\sup_{i\in I} \|D_\bsA \Psi(\alpha_i)\|\leq\frac{1}{\epsilon}$.
\end{itemize}
We thus have $(\Psi(\alpha_i))_{i\in I}\subseteq BV_{\bsA, K, 1/\epsilon}$.
It hence follow from Corollary~\ref{cor.compacite} that the net $(\|\Psi(\alpha_i)\|_{W^{\nu-1,1}})_{i\in I}$ converges to $0$. From the fact that $F$ is an $\bsA$-charge we see that the net $(F[\Psi(\alpha_i)])_{i\in I}$ converges to $0$  as well. This means, in turn, that $(\Delta_F(\alpha_i))_{i\in I}$ converges to $0$, which shows that $\Delta_F$ is weak$^*$-continuous on $V(K,\epsilon)^\circ$.

\begin{Claim}
For any $\alpha\in CH_\bsA( U )^*$, we have $\Delta_F(\alpha)=\alpha(F)$.
\end{Claim}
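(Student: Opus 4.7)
The plan is to promote the local weak$^*$-continuity of $\Delta_F$ (established in the previous claim) to global weak$^*$-continuity on $CH_\bsA(U)^*$, and then identify $\Delta_F$ with an evaluation functional by duality. First I would recall that $CH_\bsA(U)$ is a Fréchet space: choosing an exhausting sequence $(K_n)$ of compact subsets of $U$, the topology on $CH_\bsA(U)$ is induced by the countable family of seminorms $\{\|\cdot\|_{K_n}\}_{n\in\N}$, so that the sets $V(K_n,1/m)^\circ$ form a fundamental sequence of equicontinuous subsets in the dual $CH_\bsA(U)^*$. Since every equicontinuous set is contained in some $V(K,\epsilon)^\circ$, the previous claim tells us that $\Delta_F$ is weak$^*$-continuous on every equicontinuous subset of $CH_\bsA(U)^*$.

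Next I would invoke the Banach--Dieudonné/Krein--Smulian theorem, applicable here because $CH_\bsA(U)$ is Fréchet: a linear functional on the dual of a Fréchet space is weak$^*$-continuous on the whole dual as soon as its restriction to every polar $V(K,\epsilon)^\circ$ is weak$^*$-continuous. Equivalently, the kernel $\Delta_F^{-1}(0)$ has weak$^*$-closed intersection with each $V(K,\epsilon)^\circ$, hence is itself weak$^*$-closed, and so $\Delta_F$ is weak$^*$-continuous on all of $CH_\bsA(U)^*$. By the standard duality principle, weak$^*$-continuous linear functionals on $CH_\bsA(U)^*$ are exactly the evaluations at elements of $CH_\bsA(U)$; thus there exists $G\in CH_\bsA(U)$ such that $\Delta_F(\alpha)=\alpha(G)$ for every $\alpha\in CH_\bsA(U)^*$.

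It then remains to check that $G=F$, which I would do by testing against the image of $\Phi$. Indeed, for any $g\in BV_{\bsA,c}(U)$ the identity $\Psi\circ\Phi=\mathrm{Id}_{BV_{\bsA,c}(U)}$ (part~(i) of Lemma~\ref{lem.inv}, already established above) yields
\begin{equation*}
\Delta_F[\Phi(g)]=F[\Psi(\Phi(g))]=F(g),
\end{equation*}
while the preceding paragraph gives
\begin{equation*}
\Delta_F[\Phi(g)]=\Phi(g)(G)=G(g).
\end{equation*}
Hence $F(g)=G(g)$ for every $g\in BV_{\bsA,c}(U)$, so $F=G$ as elements of $CH_\bsA(U)$, and consequently $\Delta_F(\alpha)=\alpha(F)$ for all $\alpha\in CH_\bsA(U)^*$, proving the claim. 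The main obstacle is the application of Krein--Smulian and in particular checking the Fréchet hypothesis; once that is in place the identification $\Delta_F=\mathrm{ev}_F$ is forced by part~(i) of Lemma~\ref{lem.inv} combined with the injectivity of the evaluation pairing on $CH_\bsA(U)$.
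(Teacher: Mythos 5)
Your argument is correct and takes essentially the same route as the paper. The paper invokes the Banach--Grothendieck theorem (Edwards, Theorem~8.5.1) to pass directly from the weak$^*$-continuity of $\Delta_F$ on each polar $V(K,\epsilon)^\circ$ to the existence of $\tilde F\in CH_{\bsA}(U)$ with $\Delta_F(\alpha)=\alpha(\tilde F)$ for all $\alpha$, which is precisely the Krein--Smulian-plus-duality package you unfold (equicontinuous sets are contained in polars of neighborhoods of the origin, so local weak$^*$-continuity upgrades to global weak$^*$-continuity, hence evaluation at a point of $CH_{\bsA}(U)$); the closing identification $\tilde F=F$ by testing against $\Phi(g)$ and using $\Psi\circ\Phi=\mathrm{Id}$ is identical to the paper's.
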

To prove the latter claim, observe that according to Claim~\ref{cl.bg} and to the Banach-Grothendieck theorem \cite[Theorem~8.5.1]{EDW}, there exists $\tilde{F}\in CH_\bsA( U )$ such that for any $\alpha\in CH_\bsA( U )^*$, we have:
$$
\Delta_F(\alpha)=\alpha(\tilde{F}).
$$
Yet given $g\in BV_{\bsA,c}( U )$, we then have, according to [Lemma~\ref{lem.inv}, (i)]:
$$
F(g)=F\{\Psi[\Phi(g)]\}=\Delta_F[\Phi(g)]=\Phi(g)(\tilde{F})=\tilde{F}(g),
$$
\textit{i.e.} $F=\tilde{F}$, which proves the claim.

It now suffices to observe that Lemma~\ref{lem.inv} is proven for we have established the equality $F[\Psi(\alpha)]=\alpha(F)$ for any $F\in CH_\bsA( U )$ and $\alpha\in CH_\bsA( U )^*$.
\end{proof}

As a corollary, we get a proof of the density of $\Lambda[\calD(U)]$ and $\Gamma[C( U ,F^*)]$ in $CH_\bsA( U )$.
\begin{Corollary}
The space $\Lambda[\calD( U{; E^*} )]$ is dense in $CH_\bsA( U )$.
\end{Corollary}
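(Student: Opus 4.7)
The plan is to invoke the Hahn--Banach theorem in its standard form for dense subspaces: in a locally convex space $Y$, a linear subspace $M\subseteq Y$ is dense if and only if every continuous linear functional on $Y$ annihilating $M$ vanishes identically. Here we take $Y=CH_\bsA(U)$ (endowed with the Fr\'echet topology induced by the seminorms $\|\cdot\|_K$) and $M=\Lambda[\calD(U;E^*)]$.

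Concretely, the plan is to pick an arbitrary $\alpha\in CH_\bsA(U)^*$ satisfying $\alpha[\Lambda(\varphi)]=0$ for every $\varphi\in\calD(U;E^*)$ and show that $\alpha=0$. By the very definition of $\Psi$, the hypothesis rewrites as $\Psi(\alpha)(\varphi)=\alpha[\Lambda(\varphi)]=0$ for all $\varphi\in\calD(U;E^*)$, which says that the distribution $\Psi(\alpha)\in BV_{\bsA,c}(U)$ vanishes. Now it suffices to invoke Lemma~\ref{lem.inv}(ii), which asserts that $\Phi\circ\Psi=\mathrm{id}_{CH_\bsA(U)^*}$, to conclude that
\[
\alpha=\Phi[\Psi(\alpha)]=\Phi(0)=0,
\]
as required.

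There is essentially no obstacle here, since the heavy lifting has already been done in the previous proposition and lemma: the bijectivity of $\Phi:BV_{\bsA,c}(U)\to CH_\bsA(U)^*$ together with the explicit formula $\Psi(\alpha)(\varphi)=\alpha[\Lambda(\varphi)]$ reduces the density statement to the injectivity of $\Psi$, which is itself an immediate consequence of $\Phi\circ\Psi=\mathrm{id}$. The only subtlety worth emphasizing in the write-up is that one must check $CH_\bsA(U)$ is indeed a (Hausdorff) locally convex space so that Hahn--Banach applies; this follows from the fact that the seminorms $\|\cdot\|_K$, as $K$ ranges over compact subsets of $U$, separate points of $CH_\bsA(U)$ (if $\|\fF\|_K=0$ for every $K$, then $\fF$ vanishes on every $BV_{\bsA,K}(U)$ and hence on all of $BV_{\bsA,c}(U)$, using the extension property~\eqref{eq.ch2}).
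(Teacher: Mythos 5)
Your proof is correct and follows essentially the same route as the paper: annihilate the subspace with an arbitrary $\alpha\in CH_\bsA(U)^*$, observe $\Psi(\alpha)=0$ from the defining formula, deduce $\alpha=\Phi[\Psi(\alpha)]=0$ from Lemma~\ref{lem.inv}(ii), and conclude density by Hahn--Banach. The extra remark verifying that $CH_\bsA(U)$ is Hausdorff is a reasonable caution, but the substance matches the paper's argument.
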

\begin{proof}
Assuming that $\alpha\in CH_\bsA( U )^*$ satisfies $\alpha\upharpoonright \Lambda[\calD( U; E^*)]=0$, we compute for any $\varphi\in\calD( U; E^* )$:
$$
\Psi(\alpha)(\varphi):=\alpha[\Lambda(\varphi)]=0.
$$
This means that $\Psi(\alpha)=0$, and implies that $\alpha=\Phi\circ\Psi (\alpha)=\Phi(0)=0$. The result then follows from the Hahn-Banach theorem.
\end{proof}
\begin{Corollary}\label{cor.dens-2}
The space $\Gamma[C( U ,F^*)]$ is dense in $CH_\bsA( U )$.
\end{Corollary}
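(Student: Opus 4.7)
The plan is to obtain this density as an immediate corollary of the previous one, using the standing assumption of the present section. The previous corollary tells us that $\Lambda[\calD(U;E^{*})]$ is dense in $CH_{\bsA}(U)$ for the Fr\'echet topology associated with the seminorms $\|\cdot\|_{K}$. On the other hand, the blanket hypothesis made at the beginning of Section~\ref{sec3}, together with Remark~\ref{rmk.ODE}, guarantees the inclusion
$$
\Lambda\bigl[\calD(U)\bigr]\;\subseteq\;\Gamma\bigl[C(U,F^{*})\bigr]\;\subseteq\;CH_{\bsA}(U).
$$
Chaining the two facts yields that $\Gamma[C(U,F^{*})]$ contains a dense subspace of $CH_{\bsA}(U)$, hence is itself dense.

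If one prefers a Hahn--Banach formulation (in the spirit of the previous corollary), I would argue as follows. Fix $\alpha\in CH_{\bsA}(U)^{*}$ vanishing on $\Gamma[C(U,F^{*})]$; by the above inclusion, $\alpha$ vanishes in particular on $\Lambda[\calD(U)]$, so for every $\varphi\in\calD(U)$ one has
$$
\Psi(\alpha)(\varphi)\;:=\;\alpha\bigl[\Lambda(\varphi)\bigr]\;=\;0,
$$
which means $\Psi(\alpha)=0$ as a distribution. Applying Lemma~\ref{lem.inv}(ii) then yields $\alpha=\Phi\circ\Psi(\alpha)=0$, and a standard application of the Hahn--Banach theorem in the locally convex space $CH_{\bsA}(U)$ finishes the proof.

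I do not anticipate any genuine obstacle: all the substantive analytic work has already been carried out, namely the identification $CH_{\bsA}(U)^{*}\cong BV_{\bsA,c}(U)$ via $\Phi$ and $\Psi$ (used in the proof of the previous corollary) and the solvability result $\Lambda[\calD(U)]\subseteq\Gamma[C(U,F^{*})]$ coming from the local solvability of the elliptic operator $\Delta_{\bsA}=A^{*}(\cdot,D)\circ A(\cdot,D)$. The only point requiring a little care is notational, namely to check that the inclusion provided by Remark~\ref{rmk.ODE} is compatible with the spaces appearing in the density statement of the previous corollary; this is precisely what the standing assumption of the section ensures.
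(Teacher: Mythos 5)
Your proof is correct and follows exactly the route taken in the paper: invoke the previous corollary for density of $\Lambda[\calD(U;E^*)]$ and combine it with the standing inclusion $\Lambda[\calD(U;E^*)]\subseteq\Gamma[C(U,F^*)]\subseteq CH_\bsA(U)$ ensured by the section's hypotheses. The optional Hahn--Banach reformulation you offer merely unfolds the previous corollary's proof and adds nothing new, so the first two sentences of your argument suffice.
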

\begin{proof}
It follows from the previous corollary that $\Lambda[\calD( U; E^*)]$ is dense in $CH_\bsA( U )$. Since by hypothesis we also have $\Lambda[\calD(U; E^*)]\subseteq \Gamma[C(U,F^*)]\subseteq CH_\bsA(U)$, it is clear that $\Gamma(U,F^*)$ is dense in $CH_\bsA(U)$.
\end{proof}

In order to study the range of $\Gamma^*$, we introduce the following linear operator:
$$
\Xi:BV_{\bsA,c}( U )\to C( U ,F^*)^*, g\mapsto \Xi(g),
$$
defined by $\Xi(g)(v):=\Gamma(v)(g)$ for any $v\in C( U ,F^*)$.

\begin{Claim}
We have $\im\Gamma^*=\im \Xi$.
\end{Claim}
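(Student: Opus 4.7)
The plan is to use the bijection $\Phi:BV_{\bsA,c}(U)\to CH_\bsA(U)^*$ established in the previous proposition together with Lemma~\ref{lem.inv} in order to relate $\Gamma^*$ and $\Xi$ via the simple identity $\Gamma^*\circ \Phi = \Xi$. Since $\Phi$ is a bijection, this equality immediately implies $\im\Gamma^* = \im\Xi$, which is the required equality.

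More concretely, first I would unfold the definition of $\Gamma^*$: for any $\alpha\in CH_\bsA(U)^*$ and any $v\in C(U,F^*)$, one has by definition of the adjoint
$$
\Gamma^*(\alpha)(v)=\alpha[\Gamma(v)].
$$
Next I would apply this with $\alpha=\Phi(g)$ for some $g\in BV_{\bsA,c}(U)$; using the definition $\Phi(g)(F):=F(g)$ of the map $\Phi$, I would obtain
$$
\Gamma^*[\Phi(g)](v)=\Phi(g)[\Gamma(v)]=\Gamma(v)(g)=\Xi(g)(v),
$$
which proves the pointwise identity $\Gamma^*[\Phi(g)]=\Xi(g)$ in $C(U,F^*)^*$. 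Thus $\Gamma^*\circ \Phi =\Xi$ as linear maps from $BV_{\bsA,c}(U)$ into $C(U,F^*)^*$.

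Finally, since $\Phi:BV_{\bsA,c}(U)\to CH_\bsA(U)^*$ is a bijection (Lemma~\ref{lem.inv}), every element of $CH_\bsA(U)^*$ is of the form $\Phi(g)$ for a unique $g\in BV_{\bsA,c}(U)$, so that
$$
\im \Gamma^* = \Gamma^*[CH_\bsA(U)^*] = \Gamma^*[\Phi(BV_{\bsA,c}(U))] = (\Gamma^*\circ\Phi)[BV_{\bsA,c}(U)] = \Xi[BV_{\bsA,c}(U)] = \im \Xi,
$$
which finishes the proof. There is no real obstacle here: the whole claim is a bookkeeping argument that simply records, via the identification $\Phi$, the fact that $\Xi$ is nothing else than $\Gamma^*$ transported to the predual $BV_{\bsA,c}(U)$ of $CH_\bsA(U)^*$.
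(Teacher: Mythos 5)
Your proof is correct and follows essentially the same route as the paper: the key computation $\Gamma^*[\Phi(g)](v)=\Phi(g)[\Gamma(v)]=\Gamma(v)(g)=\Xi(g)(v)$ is literally the one the authors use. The only cosmetic difference is that you package it as the single identity $\Gamma^*\circ\Phi=\Xi$ and invoke bijectivity of $\Phi$, whereas the paper writes out the two inclusions separately (using $\Phi$ for one direction and $\Psi$ for the other).
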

\begin{proof}
To prove this claim, fix $\mu\in C( U ,F^*)$. If one has $\mu=\Gamma^*(\alpha)$ for some $\alpha\in CH_\bsA( U )^*$, then we compute for $v\in C( U ,F^*)$:
$$
\Xi[\Psi(\alpha)](v)=\Gamma(v)[\Psi(\alpha)]=\Phi[\Psi(\alpha)][\Gamma(v)]=\alpha[\Gamma(v)]=\Gamma^*(\alpha)(v)=\mu(v),
$$
so that one has $\mu=\Xi[\Psi(\alpha)]\in \im\Xi$. Conversely, if one has $\mu=\Xi(g)$ for some $g\in BV_{\bsA,c}( U )$, then we compute for $v\in C( U ,F^*)$:
$$
\Gamma^*[\Phi(g)](v)=\Phi(g)[\Gamma(v)]=\Gamma(v)(g)=\Xi(g)(v)=\mu(v),
$$
so that one has $\mu=\Gamma^*[\Phi(g)]\in\im\Gamma^*$.
\end{proof}

Consider the set
$$
B:=\{v\in C( U ,F^*): \|v\|_\infty\leq 1\}.
$$
It is clear that $B$ is bounded in $C( U ,F^*)$. Hence the seminorm:
$$
p:C( U ,F^*)^*\to \R_+, \mu\mapsto p(\mu):=\sup_{v\in B} |\mu(v)|,
$$
is strongly continuous (\emph{i.e.} continuous with respect to the strong topology) on $C( U ,F^*)^*$. Observe now that one has, for $g\in BV_{\bsA,c}( U )$:
\begin{eqnarray*}
p[\Xi(g)]&=&\sup_{v\in B} |\Xi(g)(v)|,\\
&=&\sup\{|\Gamma(v)(g)|:v\in B\},\\
&=& \|D_\bsA g\|.
\end{eqnarray*}
\begin{Lemma}\label{lem.closed}
The set $\im \Xi$ is strongly sequentially closed in $C( U ,F^*)^*$.
\end{Lemma}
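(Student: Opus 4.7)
The plan is, given $(g_i) \subseteq BV_{\bsA, c}(U)$ with $\Xi(g_i) \to \mu$ in the strong topology of $C(U, F^*)^*$, to produce $g \in BV_{\bsA, c}(U)$ satisfying $\Xi(g) = \mu$. Two uniform bounds come first. Since the seminorm $p(\cdot) := \sup_{v \in B} |\cdot(v)|$ is strongly continuous on $C(U, F^*)^*$ and coincides with $\|D_\bsA \cdot\|$ on $\im \Xi$, strong convergence yields $M := \sup_i \|D_\bsA g_i\| < \infty$, and then by the SGN inequality (Proposition~\ref{prop.SGNBV}) a uniform bound on $\|g_i\|_{W^{\nu-1, N/(N-1)}(U)}$. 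In parallel, the continuity of $\mu$ on the Fréchet space $C(U, F^*)$ forces $\mu$ to be represented by a compactly supported vector measure $\nu$ carried by some $K_0 \subset\subset U$.

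Next I would truncate. Fix $\chi \in \calD(U)$ with $\chi \equiv 1$ on a neighborhood of $K_0$, and set $\tilde g_i := \chi g_i$. The commutator identity
$$A(x, D)(\chi g_i) = \chi A(x, D) g_i + [A(x, D), \chi] g_i,$$
combined with the bound on $\|g_i\|_{W^{\nu-1, N/(N-1)}}$ and the fact that $[A(x, D), \chi]$ is a differential operator of order $\nu - 1$ with coefficients supported in the compact $\supp \nabla \chi$ (so that Hölder on this compact yields an $L^1$-bound for $[A(x,D),\chi]g_i$), gives $\sup_i \|D_\bsA \tilde g_i\| < \infty$ with $\supp \tilde g_i \subseteq \supp \chi$ uniformly. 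Proposition~\ref{prop.compacite} then furnishes a subsequence $\tilde g_{i_k} \to g$ in $W^{\nu-1, 1}(U)$ for some $g \in BV_{\bsA, c}(U)$.

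Finally, I would verify $\Xi(g) = \mu$. Both sides being continuous on $C(U, F^*)$, it suffices to equate them on the dense subspace $\calD(U, F^*)$. For $v \in \calD(U, F^*)$,
$$\mu(v) - \Xi(g)(v) = \lim_k \int_U (1-\chi) g_{i_k}\, \overline{A^*(x, D) v},$$
and the essential cancellation is that, since $A^*(x, D) v$ has support in $\supp v$, whenever $\chi \equiv 1$ on $\supp v$ the integrand is identically zero.

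The main obstacle is that a single fixed $\chi$ cannot ensure this for every $v \in \calD(U, F^*)$. I would resolve this by a diagonal argument: choose an exhaustion $K_0 \subseteq K_1 \subseteq \cdots$ of $U$ together with cutoffs $\chi_n \in \calD(U)$ satisfying $\chi_n \equiv 1$ on $K_n$ and with $(\|\nabla \chi_n\|_\infty)_n$ uniformly bounded (obtained by spacing successive $K_n$ by collars of constant width), and repeat the previous truncation/compactness construction for each $n$. By diagonal extraction one obtains a single subsequence $(g_{i_k})$ along which $\chi_n g_{i_k} \to g^{(n)}$ in $W^{\nu-1, 1}$ for every $n$. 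The $g^{(n)}$'s are consistent on common domains, and the concentration of $D_\bsA g_{i_k}$ near $K_0$ (a consequence of TV convergence) should force their gluing $g := \lim_n g^{(n)}$ to have compact support in $U$, so that $g \in BV_{\bsA,c}(U)$. Given any $v \in \calD(U, F^*)$, choosing $n$ large enough that $\supp v \subseteq K_n$ makes the commutator term vanish and yields $\Xi(g)(v) = \mu(v)$. The hard part is precisely this gluing step: one must rule out that the limits $g^{(n)}$ acquire support that grows with $n$, which is where the TV-concentration of $D_\bsA g_{i_k}$ at $K_0$ and the ellipticity of $A$ are essential.
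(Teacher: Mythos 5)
Your proposal correctly identifies the uniform bounds (on $\|D_\bsA g_i\|$ from strong continuity of $p$, and on $\|g_i\|_{W^{\nu-1,N/(N-1)}}$ via the SGN inequality), the truncation/commutator estimate giving each $\chi_n g_i\in BV_{\bsA,c}(U)$ with uniformly bounded variation, and the compactness step via Proposition~\ref{prop.compacite}. Those pieces are sound. But the step you yourself flag as ``the hard part'' --- that the glued $g=\lim_n g^{(n)}$ has compact support in $U$, so that $g\in BV_{\bsA,c}(U)$ and $\Xi(g)$ is even defined --- is a genuine gap, and it is precisely where the bulk of the paper's argument lives.

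The paper proves compact support of the $g_k$'s themselves (not merely of the measures $D_\bsA g_k$) in two separate claims with no counterpart in your proposal. The first shows $\supp D_\bsA g_k\subseteq K$ for one fixed $K\subset\subset U$, by contradiction: if the supports escape toward $\partial U$ one builds a bounded set $B'\subset C(U,F^*)$ (with $\sup$-norms allowed to grow on an exhaustion) whose associated strongly continuous seminorm $p'$ blows up along $\Xi(g_{k_l})$. Your phrase ``a consequence of TV convergence'' is pointing at this but would need an argument of this type; it is not automatic, and in particular concentration at $K_0:=\supp\nu$ is not what one gets. The second, and essential, step is the one your diagonal scheme cannot reproduce: knowing $D_\bsA g_k=0$ on $U\setminus K$ only says $A(x,D)g_k=0$ there distributionally, and ellipticity alone does not force $g_k$ (nor your glued $g$) to vanish off a fixed compact. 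The paper closes this by a unique-continuation-type argument: on a small ball $V\subseteq U\setminus K$ the parametrix identity turns $A(x,D)g_k=0$ into the orthogonality $\int_V g_k\,\overline{(I-r(x,D))\psi}=0$ for all $\psi\in\calD(V,F^*)$; since $r(x,D)\in OpS^{-\infty}$ has operator norm $<1$ on $L^N(V)$ for $V$ small, $I-r(x,D)$ is invertible, its range over $\calD(V,F^*)$ is dense in $L^N(V)$, and hence $g_k=0$ on $V$. Without this vanishing step nothing prevents your $g$ from having support invading $\partial U$, and the conclusion $\Xi(g)=\mu$ cannot be drawn. (A secondary, harmless remark: for bounded $U$ you cannot in general keep $(\|\nabla\chi_n\|_\infty)_n$ uniformly bounded while exhausting $U$; but this uniformity is not actually needed, since each fixed $n$ runs its own truncation.)
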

\begin{proof}
Fix a sequence $(\Xi(g_k))_{k\in\N}\subseteq\im \Xi$ and assume that, in the strong topology, one has:
$$
\Xi(g_k)\to \mu\in C( U ,F^*)^*,\quad k\to\infty.
$$
The strong continuity of $p$ then yields:
$$
c:=\sup_{k\in\N} \|D_\bsA g_k\|=\sup_{k\in\N} p[\Xi(g_i)]<+\infty.
$$

\begin{Claim}
There exists a compact set $K\subset\subset U $ such that one has $\supp g_k\subseteq K$ for each $k\in\N$.
\end{Claim}
To prove this claim, let us first prove that the sequence $(\supp D_{\bsA} g_k)_{k\in\N}$ is compactly supported in $ U $ (\textit{i.e.} that there is a compact subset of $ U $ containing $\supp D_{\bsA}g_k$ for all $k$). To this purpose, we proceed towards a contradiction and assume that it is not the case. Let then $ U =\bigcup_{j\in\N}  U _j$ be an exhaustion of $ U $ by open sets satisfying, for each $j\in\N$, $\bar{ U }_j\subseteq  U _{j+1}$ and such that $\bar{ U }_j$ is a compact subset of $ U $ for each $j\in\N$. Since $(\supp D_\bsA g_k)_{k\in\N}$ is not compactly supported, there exist increasing sequences of integers $(j_l)_{l\in\N}$ and $(k_l)_{l\in\N}$ satisfying, for any $l\in\N$:
$$
\supp (D_\bsA g_{k_l})\cap ( U _{j_l+1}\setminus\bar{ U }_{j_l})\neq \emptyset.
$$
In particular, there exists for each $l\in\N$ a vector field $v_l\in C_c( U_{j_{l}+1}\setminus\bar{U}_{j_l} ,F^*)$ with $\|v_l\|_\infty\leq 1$ and:
$$
a_l:=\left|\int_ U  \bar{v}_l\cdot d[D_{\bsA}g_{k_l}]\right|>0.
$$
Let now, for $l\in\N$, $b_l:=\max_{0\leq k\leq l} \frac{1}{a_k}$ and define a bounded set $B'\subseteq C( U ,F^*)$ by:
$$
B':=\left\{v\in C( U ,F^*): \|v\|_{\infty,\bar{ U }_{j_l+1}} \leq l b_l\text{ for each }l\in\N\right\}.
$$
It follows from the construction of $B$ that one has $w_l:=lb_lv_l\in B$ for any $l\in\N$. Moreover the seminorm
$$
p':=C( U ,F^*)^*\to\R_+, \mu\mapsto \sup_{v\in B'} |\mu(v)|,
$$
is strongly continuous. Yet we get for $l\in\N$:
$$
p'[\Xi(g_{k_l})]\geq |\Xi(g_{k_l})(w_l)|=|\Gamma(w_l)(g_{k_l})|=lb_l \left|\int_{ U } \bar{v}_l\cdot d(Dg_{k_l})\right|=lb_la_l\geq l.
$$
Since this yields $p'[\Xi(g_{k_l})]\to\infty$, $l\to\infty$, we get a contradiction with the fact that $p'$ is strongly continuous (recall that $(\Xi(g_{k_l}))_{l\in\N}$ converges in the strong topology).

\begin{Claim} Let $V$ be an open set and let $r(x,D)\in S^{-\infty}$ be a regularizing operator. Assume that $g\in L^{N/N-1}(V)$ satisfies $\int_V g[\psi-r(x,D)\psi]=0$ for all $\psi\in \calD(V,F^*)$. Under those assumptions, one has $g=0$ in $V$.
\end{Claim}
\begin{proof}
Fix $x_0\in V$ and let $\ell>0$ be such that $B(x_0,\ell)\subseteq V$. Given $\psi\in \calD(B(x_0,\ell),F^*)$, begin by observing that, for all $\phi\in \calD(B(x_0,\ell), F)$, one has:
$$
\left|\int_{B(x_0,\ell)} \la \phi,r(x,D) \psi\ra\right|\leq C\|\phi\|_{L^1}\| \psi\|_{L^N}\leq C' \,\ell \, \|\phi\|_{L^{N/N-1}} \|\psi\|_{L^N},
$$
where the first inequality follows from \cite[inequality (3.3)]{HP1}. It hence follows by duality that one has:
$$
\|r(x,D) \psi\|_{L^N(B(x_0,\ell)}=\sup_{\begin{subarray}{c}\phi\in\calD(B(x_0,\ell), F)\\\|\phi\|_{L^{N/N-1}}\leq 1\end{subarray}} \int_{B(x_0,r)} \la \phi,r(x,D)\psi\ra\leq C\, \ell \, \|\psi\|_{L^N}.
$$
Assuming hence that $\ell$ is small enough (say $\ell=\ell_0$), this yields $\|r(x,D)\|_{L^N(B(x_0,\ell_0))\to L^N(B(x_0,\ell_0))}<1$.\\
Now writing $\psi-r(x,D)\psi=(I-r(x,D)) \psi$, and using \cite[Exercice~6.14]{BREZIS} to infer that $I-r(x,D)$ is a linear isomorphism of $L^N(B(x_0,\ell_0))$, we see that $[I-r(x,D)](\calD(B(x_0,\ell_0)))$ is dense in $L^N(B(x_0,\ell_0))$, which is sufficient to conclude that $g=0$ in $B(x_0,\ell_0)$, and hence that $g=0$ on $V$ since $x_0$ is arbitrary.
\end{proof}

Now choose $K\subset\subset U$ a compact set for which one has $\supp (D_\bsA g_k)\subseteq K$ for all $k\in\N$ and fix $k\in\N$.
Fix also $x_0\in U\setminus K$, choose $\ell>0$ so that $V:=B(x_0,\ell)\subseteq U$ and fix $f\in\calD(V,F^*)$. Define $u=A(\cdot, D)q(\cdot, D) f\in\calD(V,F^*)$ satisfying \eqref{pseudo}. One then has:
$$
\int_V {g}_k\overline{f+r(x, D) f}={\int_V g_k \overline{ A^*(x,D)u}}={\int_V \bar{u}\cdot d[D_\bsA g_k]}=0,
$$
since one knows that $V\cap \supp [D_\bsA g_k ]=\emptyset$. Applying the previous claim to $g_k$, $V$ and $-r(\cdot, D)$ we hence get $g_k=0$ on $V$.
It then follows that one has $\supp g_k\subseteq K$, for $x_0$ is an arbitrary point in $U\setminus K$.\\

Getting back to the proof of Lemma~\ref{lem.closed}, observe that, according to Proposition~\ref{prop.compacite}, there exists a subsequence $(g_{k_l})\subseteq (g_k)$, $W^{\nu-1,1}$-converging to $g\in BV_{\bsA,c}( U )$.
Using the fact that $\Gamma(v)$ is an $\bsA$-charge, we compute:
$$
\mu(v)=\lim_{l\to\infty} \Xi(g_{k_l})(v)=\lim_{l\to\infty} \Gamma(v)(g_{k_l})=\Gamma(v)(g)=\Xi(g)(v),
$$
and hence we get $\mu=\Xi(g)\in\im \Xi$.
\end{proof}

We hence proved the following theorem.
\begin{Theorem}
We have $CH_\bsA( U )=\Gamma[C( U ,F^*)]$.
\end{Theorem}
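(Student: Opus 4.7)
The plan is to apply the Closed Range Theorem \cite[Theorem~8.6.13]{EDW} to the continuous linear operator $\Gamma: C(U, F^*) \to CH_\bsA(U)$, exactly along the strategy set out just after the theorem statement is announced at the beginning of Section~\ref{sec3}. The two ingredients needed are: (a) density of $\Gamma[C(U, F^*)]$ in $CH_\bsA(U)$; and (b) strong sequential closedness of $\Gamma^*[CH_\bsA(U)^*]$ in $C(U, F^*)^*$. Both ingredients have effectively been established in the preparatory material, so the proof reduces to assembling them.

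First I would invoke Corollary~\ref{cor.dens-2} to dispose of (a): the density of $\Lambda[\calD(U; E^*)]$ in $CH_\bsA(U)$ combined with the standing hypothesis $\Lambda[\calD(U; E^*)] \subseteq \Gamma[C(U, F^*)]$ gives (a) at once. For (b), I would use the identification $\im \Gamma^* = \im \Xi$ already proved (through the bijection $\Phi: BV_{\bsA,c}(U) \to CH_\bsA(U)^*$ from Lemma~\ref{lem.inv} together with the direct computation $\Gamma^*[\Phi(g)] = \Xi(g)$), and then apply Lemma~\ref{lem.closed} which ensures strong sequential closedness of $\im \Xi$ in $C(U, F^*)^*$.

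With (a) and (b) in hand, the Closed Range Theorem in the form recalled in the excerpt, together with \cite[Proposition~6.8]{PMP} (which transfers strong sequential closedness of $\im \Gamma^*$ to closedness of $\im \Gamma$ in $CH_\bsA(U)$), yields that $\Gamma[C(U, F^*)]$ is closed in $CH_\bsA(U)$. Combined with its density, this forces $\Gamma[C(U, F^*)] = CH_\bsA(U)$, which is precisely the theorem. In other words, the proof itself is a one-line citation of the preceding lemmas; the genuine mathematical difficulty was concentrated in Lemma~\ref{lem.closed}, where the compactness result Proposition~\ref{prop.compacite} for $BV_\bsA$ and the pseudodifferential argument showing $\supp g_k \subseteq K$ (via the parametrix identity \eqref{pseudo} and the $L^N$ mapping properties of the regularizing remainder $r(x,D)$) were the critical ingredients, and the hard part in reaching Lemma~\ref{lem.closed} has already been carried out.
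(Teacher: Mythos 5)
Your proof is correct and follows exactly the route laid out in the paper: density of $\Gamma[C(U,F^*)]$ from Corollary~\ref{cor.dens-2}, strong sequential closedness of $\im\Gamma^*=\im\Xi$ from Lemma~\ref{lem.closed}, and then the Closed Range Theorem \cite[Theorem~8.6.13]{EDW} together with \cite[Proposition~6.8]{PMP}. Nothing is missing; this is the paper's own assembly.
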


\section{Application: elliptic complexes of vector fields }\label{aplica}

Consider $n$ complex vector fields $L_{1},\dots,L_{n}$, $n \geq 1$, with smooth coefficients defined on an open set $\Omega \subset \R^{N}$, $N \geq 2$. Naturally, we assume that the vector fields $L_{j}$, $1\leq j\leq n$ do not vanish in  $\Omega$; in particular, they may be viewed as non-vanishing sections of the vector bundle $\mathbb{C} T(\Omega)$ as well as first order differential operators of principal type.

We impose two fundamental properties on those vector fields in our context; namely, we require that:
\begin{itemize}
  \item[(a)] $L_1,\dots,L_n$ are everywhere linearly independent;
  \item[(b)] the system $\bsL:=\{L_1,\dots,L_n\}$ is \textit{elliptic}.
\end{itemize} 
The latter means for any 1-form $\omega$, \emph{i.e.} any section of $T^{*}(\Omega)$, the equality $\langle \omega,L_{j} \rangle=0$ for $1 \leq j \leq n$ implies that one has $\omega=0$~---~which is equivalent to require that the second order operator:
$
\Delta_{\bsL}:=L_{1}^{*}L_{1}+...+L_{n}^{*}L_{n}
$
is elliptic. We use the notation  $L_{j}^{*}\;:=\;\overline{ L_j^{t}}$ where $\overline L_{j}$ denotes the vector field obtained from $L_j$ by conjugating its coefficients and let $L_j^t$ denote the formal transpose of $L_j$ for $j=1,\dots,n$~---~namely this means that, for all (complex valued) $\varphi,\psi\in\calD(\Omega)$, we have:
 $$
 \int_\Omega (L_j\varphi)\bar{\psi}=\int_\Omega \varphi (\overline{L_j^*\psi}).
 $$
Consider  the gradient $\nabla_{\bsL}:C^{\infty}(\Omega)\longrightarrow C^{\infty}(\Omega)^n$ associated to the system $\bsL$ defined by 
$\nabla_{\bsL}\,u\,:= (L_{1}u,..,L_{n}u)$ 
and its formal {complex}  adjoint operator, defined for $v \in C^{\infty}(\Omega,\mathbb{C}^{n})$ by:
\begin{equation}\nonumber
{\rm div}_{\bsL^{*}}\,v:=L^{*}_{1}v_{1}+...+L^{*}_{n}v_{n}.
\end{equation}
The following local continuous solvability result is known for divergence-type operators of the previous type; it is borrowed from \cite[Theorem 1.2]{MP}.

\begin{corollary}\label{teo6.1} Assume that the system of vector fields $\bsL$ satisfies (i) and (ii). Then every point $x_{0} \in \Omega$ is contained in an open neighborhood $U \subset \Omega$ such that for any $f \in \calD'({U})$, the equation:
$$\diver_{\bsL^{*}} v =f$$
is continuously solvable in $U$ if and only if $f$ is an $\bsL$-charge in $U$, meaning that for every $\epsilon>0$ and every compact set $K \subset \subset U$, there exists $\theta=\theta(K,\epsilon)>0$ such that one has, for every $\varphi\in\calD_K(U)$:
\begin{equation}\label{lstrong2}
\left| f(\varphi) \right| \leq {\theta}\|\varphi\|_{{1}}+\epsilon\|\nabla_{\bsL} \varphi\|_{{1}}.
\end{equation}
\end{corollary}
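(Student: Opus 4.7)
The plan is to deduce Corollary~\ref{teo6.1} as a direct specialization of Theorem~\ref{mainthm}. I would set $E := \mathbb{C}$, $F := \mathbb{C}^n$, and take $A(x,D) := \nabla_{\bsL}$, which is a first-order differential operator ($\nu = 1$). A short integration-by-parts identifies the formal adjoint in the sense of this paper with $A^*(x,D) = \diver_{\bsL^*}$, so the abstract equation $A^*(x,D) v = f$ is precisely $\diver_{\bsL^*} v = f$. Moreover, since $\nu - 1 = 0$, the seminorm $\|\cdot\|_{W^{\nu-1,1}}$ collapses to $\|\cdot\|_{L^1}$, and the $\bsA$-charge condition \eqref{lstrong} in Theorem~\ref{mainthm} reduces verbatim to the $\bsL$-charge condition \eqref{lstrong2}.

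The key step is then to check that $\nabla_{\bsL}$ satisfies the ellipticity and cancellation hypotheses of Theorem~\ref{mainthm}. Ellipticity is automatic: the principal symbol of $\nabla_{\bsL}$ at $(x_0, \xi)$ is the map $\mathbb{C} \to \mathbb{C}^n$ given by $c \mapsto c\,(\sigma(L_1)(x_0,\xi), \dots, \sigma(L_n)(x_0,\xi))$, and this is injective for $\xi \neq 0$ if and only if $\sum_j |\sigma(L_j)(x_0,\xi)|^2 > 0$, which is exactly hypothesis (b), namely the ellipticity of $\Delta_{\bsL} = \sum_j L_j^* L_j$. For cancellation, I would invoke the results of \cite{HP1,HP2,HP3}, which establish that operators $\nabla_{\bsL}$ associated to elliptic systems of complex vector fields satisfying assumption (a) fulfill the condition $(\star)$. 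In symbolic terms, the image of the principal symbol at a fixed $\xi \neq 0$ is the complex line $\mathbb{C} \cdot (\sigma(L_1)(x_0,\xi), \dots, \sigma(L_n)(x_0,\xi))$, and one checks that this family of lines cannot share a common nonzero point under the hypotheses (a) and (b).

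Once these structural hypotheses are confirmed, Theorem~\ref{mainthm} applies directly to $A = \nabla_{\bsL}$ and yields exactly the statement of the corollary: every $x_0 \in \Omega$ admits a neighborhood $U$ in which the continuous solvability of $\diver_{\bsL^*} v = f$ is equivalent to \eqref{lstrong2}. The only step requiring genuine input is the cancellation verification, which rests on the symbol analysis combined with the pointwise linear independence of the $L_j$'s; all other steps are a mere translation between the two notational frameworks, made effortless by the fact that $\nu = 1$ trivializes the higher-order Sobolev seminorm appearing in the general estimate.
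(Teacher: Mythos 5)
Your specialization is exactly the paper's own route: apply Theorem~\ref{mainthm} with $A(x,D)=\nabla_\bsL$ (so $E=\bbC$, $F=\bbC^n$, $\nu=1$, hence $\|\cdot\|_{W^{\nu-1,1}}=\|\cdot\|_{L^1}$ and $A^*=\diver_{\bsL^*}$), observe that ellipticity of $\nabla_\bsL$ is immediate from ellipticity of the system $\bsL$, and obtain the canceling property from the pointwise linear independence of the $L_j$'s via the cited Hounie--Picon results (the paper pins this down to \cite[Lemma~4.1]{HP3}). Your sketch of why the complex lines $\bbC\cdot(\sigma(L_1)(x_0,\xi),\dots,\sigma(L_n)(x_0,\xi))$ cannot share a common nonzero vector is correct when $n\geq 2$, which is the relevant regime here (for $n=1$ the image is all of $F$ and cancellation fails, as it should); this matches the intent of the paper, and the remainder of your argument is a faithful correspondence with Theorem~\ref{mainthm}.
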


This result can be seen as a direct consequence of Theorem \ref{mainthm} applied to the first order operator $A(\cdot,D):=\nabla_\bsL$, which is elliptic and canceling. 
Indeed, from the fact that $\bsL$ is elliptic we easily see that $\nabla_\bsL$ is elliptic as well.
Furthermore, \cite[Lemma4.1]{HP3} together with the assumption that the system $\bsL$ be linearly independent, shows that $\nabla_\bsL$ is canceling.

Let  $C^{\infty}(\Omega,\Lambda^{k}\R^{n})$ denote the space of $k$-forms on $\R^{n}$, $0\leq k \leq n$, with smooth, complex coefficients defined on $\Omega$. Each $f \in C^{\infty}(\Omega,\Lambda^{k}\R^{n})$ may be written as :
\[
\displaystyle{f=\sum_{|I|=k}f_{I}dx_{I}}, \quad dx_{I}=dx_{i_1}\wedge\cdots\wedge dx_{i_k},
\]
where one has $f_{I}\in C^{\infty}(\Omega)$ and where $I=\left\{i_{1},...,i_{k}\right\}$ is a set of strictly increasing indices with $i_{l}\in \left\{1,...,n\right\}$,  $l=1,...,k$.  Consider the differential operators :
\[
d_{\bsL,k}: C^{\infty}(\Omega,\Lambda^{k}\R^{n})\rightarrow C^{\infty}(\Omega,\Lambda^{k+1}\R^{n})
\]
defined by:
$d_{\bsL,0}f:=\sum_{j=1}^{n}(L_{j}f)dx_{j}$ for $f \in C^{\infty}(\Omega)$,
and, for $f=\sum_{|I|=k}f_{I}dx_{I}\in C^{\infty}(\Omega,\Lambda^{k}\R^{n})$, $1\le k\le n-1$, by:
\begin{equation}\nonumber
d_{\bsL,k}f:=\sum_{|I|=k}(d_{\bsL,0}f_{I})dx_{I}=\sum_{|I|=k}\sum_{j=1}^{n}(L_{j}f_{I})dx_{j} \wedge dx_{I}.
\end{equation}  
We also define the dual pseudo-complex $d^{*}_{\bsL,k}: C^{\infty}(\Omega,\Lambda^{k+1}\R^{n})\rightarrow 
C^{\infty}(\Omega,\Lambda^{k}\R^{n})$, $0 \leq k \leq n-1$, determined {by the following relation for any $u \in C_{c}^{\infty}(\Omega,\Lambda^{k}\R^{n})$ and $v \in C_{c}^{\infty}(\Omega,\Lambda^{k+1}\R^{n})$:}
$$\int d_{\bsL,k}u\cdot \overline{v}\; =\int u \cdot \overline{d^{*}_{\bsL,k}v},$$
where the dot indicates the standard pairing on forms of the same degree. This is to say that given $\displaystyle{f=\sum_{|J|=k}f_{J}dx_{J}}$, one has:
$$\displaystyle{d_{\bsL,k}^{*}f=\sum_{|J|=k}\sum_{j \in J}L_{j}^{*}f_{J}dx_{j} \vee dx_{J}},$$
where, for each  $j_{l} \in J=\left\{j_{1},...,j_{k}\right\}$ and  $l \in \left\{1,...,k\right\}$, $dx_{j_{l}} \vee dx_{J}$ is defined by:
$
dx_{j_{l}} \vee dx_{J}:=(-1)^{l+1}dx_{1} \wedge ... \wedge dx_{j_{l-1}}\wedge dx_{j_{l+1}}\wedge ... \wedge dx_{j_{k}}.
$

Suppose first that $\bsL$ is involutive, \textit{i.e.} that each commutator $[L_{j},L_{\ell}]$, $1\leq j,\ell \leq n$ is a linear combination of $L_{1},\dots,L_{n}$. 
Then the chain $\{ d_{\bsL,k} \}_{k}$  defines a complex of differential operators associated to the structure $\bsL$, which is precisely the \textit{de Rham complex} when $n=N$ and $L_{j}=\partial_{x_{j}}$  (see \cite{BCH} for more details). In the non-involutive situation, we do not get a complex in general, and the fundamental complex property $d_{\bsL,k+1}\circ d_{\bsL,k}=0$ might not hold. On the other hand, this chain still satisfies a ``pseudo-complex" property in the sense that
$d_{\bsL,k+1}\circ d_{\bsL,k}$ is a differential of operator of order one rather than two, as it is generically expected. We will refer to $(d_{\bsL,k},C^{\infty}(\Omega,\Lambda^{k}\R^{n}))$ as the pseudo-complex $\{d_{\bsL}\}$ associated with $\bsL$ on $\Omega$. 

Consider the operator $$A(\,\cdot \,,D)=(d_{\bsL,k}, d^{*}_{\bsL,k-1}): C_{c}^{\infty}(\Omega,\Lambda^{k}\R^{n}) \rightarrow C_{c}^{\infty}(\Omega,\Lambda^{k+1}\R^{n}) \times C_{c}^{\infty}(\Omega,\Lambda^{k-1}\R^{n}),$$
for $0\leq k \leq n$. Here the operator $d_{\bsL,-1}=d^*_{\bsL,-1} $ is understood to be zero.
The operator $A(\,\cdot \,,D)$ is ellipitic and canceling for $k\notin \{1,n-1\}$ (see Section 4 \cite{HP3} for details), so that for each $x_{0} \in \Omega$ there exists an neighborhood $U \subset \Omega$ of $x_{0}$ and $C>0$ such that the inequality:
\begin{equation}\nonumber
\|u\|_{L^{N/N-1}}\leq C (\|d_{\bsL,k}u\|_{L^{1}} + \|d^{*}_{\bsL,k-1}u\|_{L^{1}}),
\end{equation}
holds for any $u \in \calD(U,\Lambda^{k}\R^{n})$ (see \cite[Theorem B]{HP2}).
 
Now we consider the equation \ref{diverl} associated to operator $A(\,\cdot \,,D)$, \textit{i.e.} the equation:
\begin{equation}\label{aplic1}
d^{*}_{\bsL,k} u + d_{\bsL,k-1} v = f.
\end{equation}
The following local continuous solvability result for $\eqref{aplic1}$ is a consequence of our main theorem. 

\begin{corollary}\label{mainthm2}
Consider a system of complex vector fields $\bsL=\left\{ L_{1}, ... , L_{n} \right\}$, $n \geq 2$ satisfying hypotheses (i)-(ii) above, and the pseudo-complex $\{d_{\bsL,k}\}_{k}$ associated with $\bsL$ on $\Omega$ with $k\notin\{1,n-1\}$. Then every point $x_{0} \in \Omega$ is contained in an open neighborhood $U \subset \Omega$ such that for any $f \in \mathcal{D'}(U,\Lambda^{k}\R^{n}) $, the equation \eqref{aplic1} is continuously solvable in $U$ if and only if for every $\epsilon>0$ and every compact set $K \subset \subset U$, there exists $\theta=\theta(K,\epsilon)>0$ such that one has, for every $\varphi \in \calD_{K}(U,\Lambda^k\R^n)$:
\begin{equation}\nonumber
\left| f(\varphi) \right|  \leq \theta \|\varphi\|_{{1}}+\epsilon(\|d_{\bsL,k}\varphi\|_{1}+\| d^{*}_{\bsL,k-1}\varphi \|_{{1}}).
\end{equation}
\end{corollary}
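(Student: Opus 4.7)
The plan is to obtain Corollary~\ref{mainthm2} as a direct application of Theorem~\ref{mainthm}, with the operator chosen to be
\[
A(\cdot,D) := (d_{\bsL,k},\,d^{*}_{\bsL,k-1}) : C^{\infty}(\Omega,\Lambda^{k}\R^{n}) \longrightarrow C^{\infty}(\Omega,\Lambda^{k+1}\R^{n}\times \Lambda^{k-1}\R^{n}),
\]
which is a linear differential operator of order $\nu=1$ with smooth coefficients. This is exactly the operator introduced in the excerpt, so ellipticity and cancellation (for $k\notin\{1,n-1\}$) have already been recorded from \cite{HP2,HP3}, together with the local $L^{1}$ estimate in Sobolev form which is the $\nu=1$ instance of inequality \eqref{hp3ineq}.

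Next I would compute the adjoint $A^{*}(\cdot,D)$. Writing $\psi=(u,v)$ with $u$ a $(k+1)$-form and $v$ a $(k-1)$-form, two integrations by parts give
\[
\int_{\Omega} A\varphi \cdot \bar{\psi} = \int_{\Omega} d_{\bsL,k}\varphi\cdot \bar{u} + \int_{\Omega} d^{*}_{\bsL,k-1}\varphi\cdot \bar{v} = \int_{\Omega} \varphi\cdot \overline{d^{*}_{\bsL,k}u + d_{\bsL,k-1}v},
\]
so that $A^{*}(\cdot,D)(u,v)=d^{*}_{\bsL,k}u+d_{\bsL,k-1}v$. Thus equation~\eqref{aplic1} is exactly the equation $A^{*}(\cdot,D)\psi=f$, and its continuous solvability in the sense of Theorem~\ref{mainthm} amounts to the existence of a continuous section $\psi=(u,v)\in C(U,\Lambda^{k+1}\R^{n}\times \Lambda^{k-1}\R^{n})$ solving it.

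Finally, I would translate the charge condition \eqref{lstrong} into the form stated in the corollary. Since $\nu=1$, the homogeneous Sobolev seminorm $\|\varphi\|_{W^{\nu-1,1}}=\|\varphi\|_{W^{0,1}}$ is simply the $L^{1}$ norm of $\varphi$, while $\|A\varphi\|_{L^{1}}$ is (up to an equivalent norm on $\Lambda^{k+1}\R^{n}\times\Lambda^{k-1}\R^{n}$) just $\|d_{\bsL,k}\varphi\|_{1}+\|d^{*}_{\bsL,k-1}\varphi\|_{1}$. Hence inequality \eqref{lstrong} applied to $A$ is exactly the estimate appearing in the corollary. Combining this with Theorem~\ref{mainthm} on a neighborhood $U$ of $x_{0}$ produced by the latter statement yields the equivalence.

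The verification is essentially a bookkeeping exercise; the only mild point requiring care is ensuring that the product-space target $\Lambda^{k+1}\R^{n}\times\Lambda^{k-1}\R^{n}$ is compatible with the vector-valued framework of Theorem~\ref{mainthm}, and that the specific norm on that target used implicitly in $\|A\varphi\|_{L^{1}}$ matches $\|d_{\bsL,k}\varphi\|_{1}+\|d^{*}_{\bsL,k-1}\varphi\|_{1}$ up to a universal constant (any equivalent norm only changes the $\varepsilon>0$ allowed in the charge condition by a harmless rescaling). No new ingredient beyond the material already developed in the paper is needed.
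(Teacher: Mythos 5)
Your proposal is correct and follows exactly the route the paper intends: Corollary~\ref{mainthm2} is obtained by applying Theorem~\ref{mainthm} to the first-order operator $A(\cdot,D)=(d_{\bsL,k},d^{*}_{\bsL,k-1})$, whose ellipticity and cancellation for $k\notin\{1,n-1\}$ the paper has already recorded from \cite{HP2,HP3}, and by translating $W^{0,1}=L^{1}$ and $\|A\varphi\|_{L^1}$ into the stated estimate. The paper leaves this verification implicit (``a consequence of our main theorem''), and your write-up, including the adjoint computation and the norm-equivalence caveat, is a faithful spelling out of that claim.
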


Theorem \ref{teo6.1} is a direct consequence of the previous result, taking $k=0$ (recall that one has $d_{\bsL,0}=\nabla_{\bsL}$ and $d^{*}_{\bsL,0}=\diver_{\bsL^{*}}$). We emphasize that the operator is not canceling when $k=1$ or $k=n-1$ (see \cite[Section 4]{HP3}).

\bigskip

{\small
\parbox[t]{3.5in}
{\textbf{Laurent Moonens}\\
Universit\'e Paris-Saclay\\
Laboratoire de Math\'ematique UMR~8628\\
B\^atiment 307 (IMO)\\
rue Michel Magat\\
F-91405 Orsay Cedex (France)\\
E-mail: Laurent.Moonens@universite-paris-saclay.fr}
\bigskip

{\small
\parbox[t]{5in}
{\textbf{Tiago Picon}\\
University of S\~ao Paulo\\
Faculdade de Filosofia, Ci\^encias e Letras de Ribeir\~ao Preto\\ 
Departamento de Computa\c{c}\~ao e Matem\'atica\\
Avenida Bandeirantes 3900, CEP 14040-901,
Ribeir\~ao Preto, Brasil\\
E-mail: picon@ffclrp.usp.br}
\bigskip

\end{document}